\date{\today}
\title[]
{Segre forms of singular metrics on vector bundles and Lelong numbers}
\def\la{\langle}
\def\ra{\rangle}
\def\e{{\epsilon}}
\def\w{\wedge}
\def\R{{\mathbb R}}
\def\C{{\mathbb C}}
\def\P{{\mathbb P}}
\def\B{{\mathcal B}}
\def\codim{{\rm codim\,}}
\def\Z{{\mathcal Z}}
\def\O{{\mathcal O}}
\def\L{{\mathcal L}}
\def\Q{{\mathbb Q}}
\def\L{{\mathcal L}}
\def\U{{\mathcal U}}
\def\Pk{{\mathbb P}}
\def\Ok{{\mathcal O}}
\def\1{{\bf 1}}
\def\La{{\mathcal L}}
\def\rank{{\rm rank \,}}
\def\GZ{{\mathcal {GZ}}}
\def\GGZ{{\mathcal {QZ}}}
\def\QZ{{\mathcal {QZ}}}
\def\QB{{\mathcal{QB}}}
\def\J{{\mathcal J}}
\def\k{{\kappa}}
\def\nbh{{neighborhood }}
\def\Zk{{\mathbb Z}}
\def\Ba{{\mathcal B}}
\def\QB{{\mathcal{QB}}}
\def\B{{\mathcal B}}
\def\Zc{{\mathcal Z}}
\def\qas{{qas}}
\def\s{{\bf s}}
\def\c{{\bf c}}
\def\I{{I}}
\DeclareMathOperator{\mult}{mult}
\DeclareMathOperator{\dv}{div}
\newtheorem{thm}{Theorem}[section]
\newtheorem{lma}[thm]{Lemma}
\newtheorem{cor}[thm]{Corollary}
\newtheorem{prop}[thm]{Proposition}
\theoremstyle{definition}
\newtheorem{df}[thm]{Definition}
\theoremstyle{remark}
\newtheorem{preremark}[thm]{Remark}
\newtheorem{preex}[thm]{Example}
\newenvironment{remark}{\begin{preremark}}{\qed\end{preremark}}
\newenvironment{ex}{\begin{preex}}{\qed\end{preex}}
\numberwithin{equation}{section}
\date{\today}
\author{Mats Andersson \& Richard Lärkäng}
\address{Department of Mathematics\\Chalmers University of Technology and the University of G\"oteborg\\
S-412 96 G\"OTEBORG\\SWEDEN}
\email{matsa@chalmers.se, larkang@chalmers.se}
\begin{document}

\begin{abstract}
Let $E\to X$ be a holomorphic vector bundle. We consider a class of a singular Hermitian metrics on $E$ with analytic singularities that contains all
Griffiths negative such metrics.
One can define, given a smooth reference metric $h_0$, a current $s(E,h,h_0)$ called the associated Segre form, which
defines the expected Bott-Chern class and coincides with the usual Segre form of $h$ where it is smooth.
We prove that $s(E,h,h_0)$ is the limit of the Segre forms of a sequence of smooth metrics if the metric is smooth outside the
degeneracy locus, and in general as a limit of Segre forms of metrics with empty degeneracy locus.
One can also define an associated Chern form $c(E,h,h_0)$. We prove that
the Lelong numbers of $s(E,h,h_0)$ and $c(E,h,h_0)$ are integers if the singularities are integral,
and non-negative for $s(E,h,h_0)$.
\end{abstract}

\maketitle

\section{Introduction}
Inspired by the importance, during several decades,  of singular metrics on line bundles in
algebraic geometry, with origins in \cite{Dem1}, several efforts have been done to study
singular metrics on higher rank holomorphic vector bundles.
In particular, such metrics appeared in the work of Berndtsson-P\u aun, \cite{BePa},
in connection with pseudoeffectivity of twisted relative canonical bundles.
Contrary to the line bundle case, it is not clear what the curvature and connection matrix
for a singular metric should be, cf. \cite[Theorem~1.5]{Raufi}. This would appear to be an obstacle for defining associated Chern
forms (or other characteristic forms, like Segre forms), since a standard definition
is in terms of the curvature matrix.
However, for appropriate metrics, it has still been possible
to introduce such associated forms, or more precisely currents of order $0$,
see, e.g., \cite{Hajli}, \cite{Ina}, \cite{LRRS}, \cite{LRSW} and \cite{Xia}.
In this article, we further the study of such associated forms, and in slightly more general situations.

\smallskip

Assume that $E\to X$ is a holomorphic vector bundle over a complex manifold $X$
with a Griffiths negative singular metric $h$ with analytic singularities; this means that $\psi=\log|\cdot|^2_h$ is plurisubharmonic (psh) with analytic
singularities on $E\setminus 0_E$, see Section~\ref{qgnsec}. The latter condition means that locally
$
\psi=c\log|f|^2+ b,
$ 
where $f$ is a tuple of holomorphic functions, $c>0$ and $b$ is bounded.
We say that the singularities are integral (rational/real) if $c$ is a positive integer (rational/real number).
In \cite{LRSW}
was introduced an associated Segre form\footnote{We keep the notion 'Segre form' although in the singular case
$s(E,h,h_0)$ is in general a current (of order $0$).}
$s(E,h,h_0)$, slightly depending on a smooth  reference metric $h_0$ on $E$,
that represents the expected Bott-Chern class,
and coincides with the usual Segre form where the metric is smooth.

We will allow bundles with metrics such that $\psi=\log|\cdot|^2_h$ is quasi-plurisubharmonic (qpsh) with analytic
singularities on $E$.
If in addition $\psi$ has integral singularities, we say that
the metric, and the bundle equipped with such a metric, is \qas.
If the singularities are rational or real we say that the bundle/metric  is $\Q$-\qas\  or $\R$-\qas.
We mainly consider (integral)  \qas\ metrics,
which is practical in order to keep track on arithmetic information.
There are analogous results for $\Q$-qas and $\R$-qas metrics,  %
see Remark~\ref{mainQqasRcas}.

The definition of Segre form $s(E,h,h_0)$ in \cite{LRSW} extends to the case when $h$ is a qas metric on $E\to X$ and $h_0$ is a smooth reference metric. Let
$h_\e = h + \e h_0$, i.e.,
\begin{equation}\label{regular1}
|\cdot |^2_{h_\e}=|\cdot|^2_h+\e |\cdot |^2_{h_0}.
\end{equation}
If $h$ is smooth outside the degeneracy locus, then $h_\e$ are smooth and so the classical
associated Segre forms $s(E,h_\epsilon)$ are well-defined. In general $h_\e$ is qas with empty degeneracy locus,  and the same definition makes
sense using Bedford-Taylor products, see \eqref{spagat}.  Our first main result, Theorem~\ref{main1}, in particular states that
\begin{equation}\label{regular2}
s(E,h,h_0)=\lim_{\e\to 0} s(E,h_\e).
\end{equation}

\begin{ex}\label{modelex}
Assume that $g\colon E\to F$ is a holomorphic vector bundle morphism and $F$ has a smooth metric,
or a \qas\ metric with empty degeneracy locus. Then the metric $|\alpha|^2=|g(x)\alpha|^2$
on $E$ inherited from $F$ is \qas, see Corollary~\ref{sinister}.
\end{ex}

\begin{ex}
An example of a singular metric $e^{\psi}$ on a line bundle $L$ is given by
$\psi=\log(|\sigma_1|^2+\cdots +|\sigma_m|^2)$, where $\sigma_j$ are sections of $L^{-1}$.
It is covered by Example~\ref{modelex} via the mapping $L\to X\times \oplus_1^m\C $,
defined by $(\sigma_1,\ldots,\sigma_m)$. 
Such metrics often occur in applications to complex algebraic geometry.
\end{ex}

We have that $s(E,h,h_0)=s_0(E,h,h_0)+s_1(E,h_0,h)+ \cdots$,  where
$s_k(E,h,h_0)$ are closed $(k,k)$-currents of order $0$.
They are not necessarily positive unless both $h$ and $h_0$ are
Griffiths negative. There are anyway well-defined multiplicities $\mult_x s_k(E,h,h_0)$
at each point $x$, defined in the same way as Lelong numbers, see Section~\ref{qzsection}.
Here is our first main theorem:

\begin{thm}\label{main1}
Assume that $E\to X$ is a holomorphic vector bundle equipped with a  \qas\ metric $h$,
and let $Z$ be the degeneracy locus of $h$.
Let $h_0$ be a smooth reference metric, and let $s(E,h,h_0)$ denote the
associated Segre form.

\smallskip
\noindent
(i) If $h_\epsilon$ is the metric defined by \eqref{regular1}, then \eqref{regular2} holds.

\smallskip
\noindent
(ii) There is a current $w$ on $X$ such that
\begin{equation}\label{ddcw}
dd^c w= s(E,h,h_0)- s(E,h_0).
\end{equation}

\smallskip
\noindent
(iii)
We have the decomposition
\begin{equation}\label{uppesittare}
s(E,h,h_0)=s'(E,h) + M^{E,h,h_0},
\end{equation}
where $M^{E,h,h_0}$ is closed and has support on $Z$ and $s'(E,h)=\1_{X\setminus Z} s(E,h,h_0)$
is independent of $h_0$. If $h$ is smooth on $X\setminus Z$, then
$s'(E,h)$ coincides with the usual Segre form of $h$ there.

\smallskip
\noindent
(iv) If $\pi\colon X'\to X$ is a modification,
then $\pi^*h$ is a \qas\ metric on $\pi^*E$, and
\begin{equation}\label{sformel}
\pi_* s(\pi^*E,\pi^*h,\pi^* h_0)=s(E,h,h_0).
\end{equation}

\smallskip
\noindent (v)
If $E'\to X$ is another holomorphic vector bundle, equipped with a smooth metric $h'$,
then
\begin{equation}\label{main31}
s(E'\oplus E,h'\oplus h,h'\oplus h_0)= s(E',h')\w s(E,h,h_0).
\end{equation}

\smallskip
\noindent
(vi)
For each integer $k\ge 0 $ there is a decomposition
\begin{equation}\label{siu0}
M_k^{E,h,h_0}=S^{E,h}_k+ N^{E,h,h_0}_k,
\end{equation}
where $S_k^{E,h}$ is the Lelong current of an analytic cycle of codimension $k$,
and the set of $x$ such that  $\mult_x s_k'(E,h)$ or $\mult_x N_k^{E,h,h_0}$
is nonvanishing, is contained
in an analytic set of codimension $\ge k+1$. The multiplicities of $s_k'(E,h)$, $S_k^{E,h}$ and $N_k^{E,h,h_0}$ are non-negative integers that are independent of $h_0$.

\smallskip
\noindent
(vii)
If $h'$ is a qas metric on $E \to X$ comparable to $h$, then $\mult_x M_k^{E,h,h_0}=\mult_x M_k^{E,h',h_0}$  and
$\mult_x s'_k(E,h)=\mult_x s_k'(E,h')$ for $x\in X$ and $k= 0,1,2, \ldots$.
In particular,  $S_k^{E,h}=S_k^{E,h'}$, $k=0,1,2,\ldots$.
\end{thm}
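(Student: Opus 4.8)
The plan is to reduce \textit{(vii)} to a single invariance statement, namely that the multiplicities of the (generally non-positive) Monge--Amp\`ere-type currents entering the definition of the Segre form are insensitive to adding a bounded function to the plurisubharmonic-type weights that define them; the convergence in \textit{(i)} is of no help here, since multiplicities may jump upward under weak limits.

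First I would unwind the hypothesis. That $h'$ is comparable to $h$ means that locally $C^{-1}|\cdot|^2_{h'}\le|\cdot|^2_h\le C|\cdot|^2_{h'}$ on $E\setminus 0_E$, so $\psi:=\log|\cdot|^2_h$ and $\psi':=\log|\cdot|^2_{h'}$ differ by a locally bounded function. Hence $h$ and $h'$ are \qas\ with the same unbounded locus $Z=Z_h=Z_{h'}$, and in a local representation $\psi=c\log|f|^2+b$ the constant $c$ and the ideal generated by $f$ are the same for $h$ and $h'$; since $h_0$ and $h_0'$ are smooth they are locally comparable, so $\log|\cdot|^2_{h_0}-\log|\cdot|^2_{h_0'}$ is locally bounded as well. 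Next I recall, following \cite{LRSW}, how $s(E,h,h_0)$ is produced: on the projectivization $\pi\colon\P(E)\to X$ the metrics $h$ and $h_0$ induce a singular, resp.\ a smooth, metric on the tautological line bundle, with local weights $\varphi_h$ and $\varphi_{h_0}$, where $\varphi_h$ is qpsh-type with analytic singularities along $\pi^{-1}(Z)$; the components $s_k(E,h,h_0)$ are then $\pi_*$ of fixed polynomial expressions of degree $\rank E-1+k$ in $dd^c\varphi_{h_0}$ and $dd^c\varphi_h$, equivalently in the smooth form $dd^c\varphi_{h_0}$ and the global weight difference $u:=\varphi_h-\varphi_{h_0}$ on $\P(E)$, computed in the $\QZ$-calculus of Section~\ref{qzsection}; the splittings \eqref{uppesittare} and \eqref{siu0}, together with $s'(E,h)$, $M^{E,h,h_0}$, $S_k^{E,h}$ and $N_k^{E,h,h_0}$, are all functorial in these data. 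The crucial point is that, by the first two observations above, $u=\varphi_h-\varphi_{h_0}$ and $u':=\varphi_{h'}-\varphi_{h_0'}$ differ by a bounded function on $\P(E)$ and have the same analytic-singularity locus $\pi^{-1}(Z)$; using part \textit{(iv)} one may further pull back by a log resolution and assume $Z$, hence $\pi^{-1}(Z)$, is a simple normal crossings divisor, which streamlines the Siu decomposition below.

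The main step is then the following invariance, to be extracted from Section~\ref{qzsection}: it is the signed analogue, valid in the $\QZ$-calculus, of Demailly's classical comparison theorem (which asserts that the Lelong numbers of a Bedford--Taylor current $(dd^cu)^p$ depend only on $u$ modulo $L^\infty_{\loc}$). Precisely: if $\theta$ is a smooth closed form and $u,u'$ are qpsh-type with the same analytic singularities and $u-u'\in L^\infty_{\loc}$, then every current obtained from $(\theta,u)$ by the operations of the $\QZ$-calculus has, at each point, the same multiplicity as the corresponding current obtained from $(\theta,u')$, and the two have the same codimension-$k$ Siu components. Applying this on $\P(E)$ to the pairs $(dd^c\varphi_{h_0},u)$ and $(dd^c\varphi_{h_0'},u')$, and pushing down --- using that, as in the proofs of \textit{(iv)} and \textit{(vi)}, the multiplicity of $\pi_*T$ at $x$ is determined by the behaviour of $T$ along $\pi^{-1}(x)$ --- gives $\mult_x M_k^{E,h,h_0}=\mult_x M_k^{E,h',h_0'}$ and $\mult_x s_k'(E,h)=\mult_x s_k'(E,h')$ for every $x\in X$ and $k\ge0$. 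Finally, by \textit{(vi)}, $S_k^{E,h}$ is the analytic cycle of pure codimension $k$ whose coefficient along each irreducible component $V$ is the generic value of $\mult_x M_k^{E,h,h_0}$ on $V$, whereas $s_k'$ and $N_k$ carry multiplicities only on analytic sets of codimension $\ge k+1$; hence the codimension-$k$ part of the function $x\mapsto\mult_x M_k^{E,h,h_0}$ recovers $S_k^{E,h}$, and since this function is the same for $h$ and $h'$ we conclude $S_k^{E,h}=S_k^{E,h'}$ for all $k$.

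The main obstacle I anticipate lies in the invariance used in the previous paragraph. Away from $Z$ the weights are locally bounded and it is the classical Demailly comparison theorem, but on $Z$ one must know, first, that the non-positive Segre currents have well-defined multiplicities at all (this is the content of Section~\ref{qzsection}), and, more delicately, that this invariance is compatible with each of the operations --- $\pi_*$, multiplication by $\1_{X\setminus Z}$, and the Siu decomposition \eqref{siu0} --- used to cut $s(E,h,h_0)$ into $s_k'$, $M_k$, $S_k$ and $N_k$. Everything else is bookkeeping, resting on the fact that smooth forms and bounded perturbations of the weights contribute nothing to multiplicities.
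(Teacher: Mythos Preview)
Your approach is essentially the paper's: the invariance principle you describe---multiplicities in the $\QZ$-calculus are unchanged when the weight is altered by a locally bounded function---is precisely Proposition~\ref{ekorre2}~(ii) combined with Proposition~\ref{gorilla}, and the paper packages the argument by showing $s(E,h,h_0)-s(E,h',h_0')\in\QZ_0(X)$ (Theorem~\ref{main11}~(vi)), then using that $\1_Z$ preserves $\QZ_0$ (equation~\eqref{stare}) so the same holds for the $M$-parts, and finally that multiplicities vanish on $\QZ_0$ (Proposition~\ref{gorilla}). The log-resolution step you propose is unnecessary, and there is no need to compute multiplicities on $\P(E)$ and then push down---the whole argument takes place directly in $\QZ(X)$ and the quotient $\QB(X)$.
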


If the metric is $\Q$-\qas\  ($\R$-\qas),  then the same statements hold, except that the multiplicities in part (vi)
are rational (real).

\smallskip

\begin{remark}
\noindent (i) Equation \eqref{ddcw} means that $s(E,h,h_0)$ represents the Bott-Chern class of $E$.

\smallskip
\noindent (ii)
The properties (ii) to (iv) are (at least implicitly) proved in \cite{LRSW}.

\smallskip
\noindent(iii) Equality \eqref{main31} implies that
\begin{align*}
s'(E'\oplus E,h'\oplus h,h'\oplus h_0)&=s(E',h')\w s'(E,h,h_0)
\text{ and } \\
M^{E'\oplus E,h'\oplus h,h'\oplus h_0} &=s(E',h')\w M^{E,h,h_0}.
\end{align*}

\smallskip
\noindent (iv) If $M_k^{E,h,h_0}$ is a positive current, \eqref{siu0} is precisely the Siu decomposition.
\end{remark}

\smallskip

The definition of a Griffiths negative singular metric was introduced in \cite{BePa}, and its dual metric is  by definition Griffiths positive. Although most attention in the literature
is paid to positive metrics, we focus in this article on qas metrics which are slightly more general than negative metrics with integral analytic singularities.
This leads to simpler formulations, in particular less signs. By considering duals of such metrics, one obtains corresponding statements concerning
Griffiths positive metrics, cf. Remark~\ref{remLRSWcomparison}.

\smallskip

Recall that in algebraic geometry, characteristic classes of vector bundles such as the Chern class,
are defined as endomorphisms on Chow groups, i.e., the $\Zk$-module
of analytic cycles $\Zc(X)$ modulo rational equivalence.

Inspired by \cite{AESWY} we introduce a $\Zk$-module $\QZ(X)$ of currents of order $0$
that contains the module $\Zc(X)$ of analytic cycles as a submodule, if
cycles are identified with their associated Lelong currents. In particular the $(0,0)$-current $1$ is an element in $\QZ(X)$.
Any element $\QZ(X)$ of a fixed dimension has a well-defined multiplicity (Lelong number) that is an integer, at each point.
We also introduce certain quotient classes $\QB(X)$ that, contrary to the Chow classes,
keep local information such as multiplicities; this fact is crucial in this paper.


\smallskip
A second main result is that the Segre form $s(E,h,h_0)$ extends to an endomorphism
$\s(E,h,h_0)$ on $\QZ(X)$ such that $\s_0(E,h,h_0)=I$, the identity, and $\s(E,h,h_0) 1=s(E,h,h_0)$.
An important fact is that $\s(E,h,h_0)$ induces a well-defined endomorphism
$\s(E,h)$ of $\QB(X)$ that is independent of $h_0$.
There is an analogue of Theorem~\ref{main1} for $\s(E,h,h_0)$, see Theorem~\ref{main11}.
Since $\s(E,h,h_0)-I$ has positive degree, 
\begin{equation}\label{tyska}
\c(E,h,h_0)=\sum_{\ell=0}^\infty (-1)^\ell (\s(E,h,h_0)-\I)^\ell
\end{equation}
is a well-defined endomorphism such that
\begin{equation}\label{stork}
\c(E,h,h_0) \s(E,h,h_0)=\s(E,h,h_0) \c(E,h,h_0)=\I.
\end{equation}
It is natural, cf.~\eqref{cformel},  to say that $c(E,h,h_0):=\c(E,h,h_0)1$ is the Chern form associated with $(E,h,h_0)$.
The same definition of $c(E,h,h_0)$ appeared in \cite{LRSW} but expressed in a different way.

\begin{thm}\label{main2}
Let $(E,h,h_0)$ be as in Theorem~\ref{main1}.
Properties (ii)-(vii) of Theorem~\ref{main1} hold if $s(E,h,h_0)$, $s'(E,h)$, and $M^{E,h,h_0}$ are replaced by $c(E,h,h_0)$, $c'(E,h)$, and $\1_Z s(E,h,h_0)$,
except that the multiplicities are not non-negative.
\end{thm}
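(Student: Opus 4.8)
The plan is to reduce Theorem~\ref{main2} to Theorem~\ref{main1} by exploiting the purely formal nature of the relation between $\s$ and $\c$ in \eqref{tyska}–\eqref{stork}. First I would observe that the endomorphism version $\s(E,h,h_0)$ of the Segre form (Theorem~\ref{main11}) satisfies analogues of properties (ii)–(vii), and that $\c(E,h,h_0)$ is a fixed universal polynomial (with integer coefficients) in the components $\s_k(E,h,h_0)$, $k\ge 1$; indeed $\c_k$ is the $k$-th Newton-type polynomial in the $\s_j$ coming from inverting $\s$. Since each step below only uses algebraic manipulations that are continuous for the relevant topology and compatible with pushforward and wedge products, the corresponding property for $c(E,h,h_0)=\c(E,h,h_0)1$ will follow.

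Concretely, for (ii) I would note that $dd^c$ of each $\c_k(E,h,h_0)-\c_k(E,h_0)$ telescopes: writing $\s=\s(E,h,h_0)$ and $\s_0=\s(E,h_0)$, the identity $\c(E,h,h_0)=\c(E,h_0)-\c(E,h_0)\w(\s-\s_0)\w\c(E,h,h_0)$ together with \eqref{ddcw} applied componentwise produces the desired potential $w$, because all the extra factors are smooth forms (being built from $h_0$) and $s(E,h,h_0)-s(E,h_0)=dd^c(\cdots)$. For (iv), pushforward under a modification $\pi$ commutes with wedging by smooth forms pulled back from $X$ (projection formula) and with the algebraic operations defining $\c$, so \eqref{sformel} for $s$ gives the corresponding identity for $c$. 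Property (v) is immediate from multiplicativity: $\c(E'\oplus E,\cdots)=\c(E',h')\w\c(E,h,h_0)$, and expanding into components. For (iii), (vi), (vii) — the statements about the decomposition $c(E,h,h_0)=c'(E,h)+M^{E,h,h_0}_c$, the refined decomposition into an analytic cycle part plus a lower-dimensional part, and invariance of multiplicities under comparable metrics — I would argue that $c'$ is defined as $\1_{X\setminus Z}c(E,h,h_0)$, that off $Z$ everything is smooth and coincides with the usual Chern form, and that the multiplicity statements follow because $\mult_x$ is additive and, crucially, $\mult_x(\alpha\w T)=0$ whenever $\alpha$ is a smooth form of positive degree — so only the "top" analytic-cycle pieces $S_k^{E,h}$ of the Segre side contribute to multiplicities of $c_k$, with the combinatorial coefficient forcing integrality (but not sign, since inverting a series with non-negative coefficients introduces signs). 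Part (vii) then follows from the corresponding Segre statement since $\c$ is a universal function of $\s$.

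The main obstacle I anticipate is bookkeeping in (vi): one must check that when the universal polynomial expressing $\c_k$ in the $\s_j$ is applied, the "analytic cycle of codimension $k$" piece $S_k$ of $c_k$ really is (up to sign) an integral combination of intersection-type cycles and that the error terms still have multiplicity support in codimension $\ge k+1$. This requires knowing that wedging the codimension-$k$ cycle parts $S^{E,h}_j$ with smooth positive-degree forms, or with lower-codimension $s'_j/N_j$ terms, either vanishes in the relevant degree or lands in codimension $\ge k+1$ — i.e. a dimension count plus the vanishing-of-multiplicity-against-smooth-forms lemma. Once that is in place, the integrality of the multiplicities of $S_k$, $N_k$ and $c'_k$ is inherited from Theorem~\ref{main1}(vi) through the integer-coefficient polynomial $\c$, and the possible failure of non-negativity is exactly the sign $(-1)^\ell$ in \eqref{tyska}. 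I would also double-check that $\s(E,h_\e)\to\s(E,h,h_0)$ as endomorphisms (Theorem~\ref{main11}) passes to $\c$, using that \eqref{tyska} is a locally finite sum and composition of the relevant operators is sequentially continuous; this gives property (i)'s analogue, namely $c(E,h_\e)\to c(E,h,h_0)$.
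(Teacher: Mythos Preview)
Your overall strategy --- reduce $\c$ to $\s$ via the formal inverse \eqref{tyska} and then transfer properties (ii)--(vii) from Theorem~\ref{main1}/\ref{main11} --- is exactly the paper's approach, carried out in Proposition~\ref{main2prop}. Your treatment of (ii), (iv), (v) is essentially the same telescoping/multiplicativity argument the paper uses (note that for (ii) you really need the endomorphism statement Theorem~\ref{main11}(iv) with $\mu=c(E,h,h_0)\in\QZ(X)$, not just \eqref{ddcw}; your identity $\c=\c_0-\c_0(\s-\s_0)\c$ makes this explicit).

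The genuine gap is in your handling of (vi)--(vii). Your key lemma ``$\mult_x(\alpha\wedge T)=0$ for smooth $\alpha$ of positive degree'' is correct, but it does not apply here: $c_k(E,h,h_0)=\c_k(E,h,h_0)\,1$ is built from \emph{compositions of the endomorphisms} $\s_j(E,h,h_0)$, not from wedge products of the \emph{currents} $s_j(E,h,h_0)$ with smooth forms. When $h$ is singular these are different objects, and there is no way to decompose $\c_k\,1$ into pieces of the form ``smooth form $\wedge$ cycle'' to which your lemma applies. The paper sidesteps this entirely by working in $\QB(X)$: since each $\s_k(E,h,h_0)$ descends to an endomorphism of $\QB(X)$ independent of $h_0$ and of the comparable representative of $h$ (Theorem~\ref{main11}(vi)), the same holds for any polynomial in the $\s_k$, in particular for $\c_k$. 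Integrality of multiplicities and the Siu-type decomposition of $c_k$ then come for free from Theorem~\ref{superman} applied to $c_k\in\QZ(X)$, and invariance of multiplicities from Proposition~\ref{gorilla}. No dimension-counting on products of $S_j$, $N_j$, $s'_j$ is needed.

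Finally, your closing remark that $c(E,h_\e)\to c(E,h,h_0)$ would follow from sequential continuity is not part of Theorem~\ref{main2} (which covers (ii)--(vii) only) and is in fact not established: it would require convergence of iterated products $\s_{j_1}(E,h_\e)\cdots\s_{j_m}(E,h_\e)\,1$, which the paper proves only when the unbounded locus is empty (Proposition~\ref{prop:locBddSegreProducts}). You should drop that claim.
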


The analogue of \eqref{regular2} is not true for $c(E,h,h_0)$, see Section~\ref{remex}.

As already mentioned, the starting point for this paper is \cite{LRSW} and the ideas there.
The regularization \eqref{regular1} relies on results in \cite{ABW, Bl}.
The arithmetic analysis of the Lelong numbers
relies on ideas in \cite{Asommar}, and the $\Zk$-modules $\QZ(X)$ and $\QB(X)$ that
are modelled after the somewhat smaller modules $\GZ(X)$ and $\B(X)$ in \cite{AESWY}.

\subsection*{Acknowledgements}

We are grateful to H\aa kan Samuelsson Kalm, Antonio Trusiani and David Witt Nystr\"om for valuable discussions.

\section{Preliminaries}

Throughout this article $X$ is assumed to be a complex manifold.
Given a holomorphic vector bundle $E\to X$ we let $p\colon \Pk(E)\to X$ denote the projectivization,
so that the fiber above $x$ is the set of all lines\footnote{There is another common convention in the literature, where
the fiber instead is all hyperplanes through the origin.} through the origin in $E_x$.
Then the pullback $p^*E\to \Pk(E)$ has the tautological sub(line)bundle
$L_E\to \Pk(E)$ whose fiber over $(x,[\alpha])\in \Pk(E)$ is the line in $E_x$ determined by
$\alpha$.

If $E$ has a smooth Hermitian metric $h$, then $p^* E$ and hence $L_E$ inherit
smooth Hermitian metrics.
Notice that a local section $s$ of $L_E$ has the form $s=\tilde s(x,\alpha) \alpha$, where
$\tilde s$ is a holomorphic function on $E\setminus 0_E$ which is $-1$-homogeneous in $\alpha$. Thus any local section can be identified with such an $\tilde s$.
The induced norm is
\begin{equation}\label{snorm}
|s|^2_{e^\psi}:=|\tilde s|^2|\alpha|^2_h.
\end{equation}
In a local frame for $L_E$, say $1/\alpha_j$, the section $s$ is represented by the function $\tilde s \alpha_j$, and
from \eqref{snorm} we have that
$$
|s|^2_{e^\psi}=|\tilde s\alpha_j|^2e^{\log(|\alpha|_h^2/|\alpha_j|^2)}.
$$
Thus the metric $e^\psi$ on $L_E$ with respect to the local frame $1/\alpha_j$ is given by
\begin{equation}\label{phiform}
\psi=\log(|\alpha|^2_h/|\alpha_j|^2).
\end{equation}
Therefore $dd^c \log |\alpha|_h^2$ is a well-defined form on $\Pk(E)$ and
$c_1(L_E,e^\psi)=-dd^c\log|\alpha|^2_h$ is the first Chern form.
Moreover,  
$$
s(L_E,e^\psi)=\frac{1}{c(L_E,e^\psi)}=\frac{1}{1+c_1(L_E,e^\psi)}
=\sum_{\ell=0}^\infty (dd^c\log|\alpha|_h^2)^\ell.
$$
The Segre form of $(E,h)$ is defined as
\begin{equation}\label{segdef}
s(E,h)=p_* s(L_E,e^\psi)=p_*\sum_{\ell=0}^\infty (dd^c\log|\alpha|_h^2)^\ell,
\end{equation}
and it is a smooth form since $p$ is a proper submersion.
We write $s(E,h) = \sum s_k(E,h)$, where $s_k(E,h)$ has bidegree $(k,k)$.
It turns out that $s_0(E,h)=1$ and thus $s(E,h)-1$ has positive bidegree. We get the Chern form
\begin{equation}\label{cformel}
c(E,h)=\frac{1}{s(E,h)}=\sum_{\ell=0}^\infty (-1)^\ell (s(E,h)-1)^\ell.
\end{equation}
By \cite{Mour}, see also \cite{Div}, this  definition of $c(E,h)$ coincides
with the definition as the sum of the elementary symmetric
polynomials acting on ($i/2\pi$ times) the curvature tensor $\Theta(h)=D_h^2$ of $(E,h)$, where
$D_h$ denotes the Chern connection associated with $h$.  The Segre form is then defined as
$s(E,h)=1/c(E,h)$.

\subsection{Qpsh functions with analytic singularities}

Let $Y$ be a complex manifold.
Recall that a real-valued function $u$ is qpsh (quasi-plurisubharmonic) if locally
$u=v+a$, where $v$ is psh and $a$ is smooth.
As already mentioned, $u$ has (integral) analytic singularities if locally
\begin{equation}\label{totem1}
u=c\log|f|^2+ b,
\end{equation}
where $b$ is bounded and $c$ is a positive (integral) number. 

\begin{lma}\label{lemur}
Assume that $W$ is a connected complex manifold. If $\tau\colon W\to Y$ is holomorphic and $u$ is qpsh with (integral)
analytic singularities on $Y$, then $\tau^*u$ is qpsh with (integral) analytic singularities in $W$.
Possibly $\tau^*u$ is identically $-\infty$.
\end{lma}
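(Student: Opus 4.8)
The plan is to verify the conclusion locally and then invoke the connectedness of $W$ to globalize, the only alternative being $\tau^*u\equiv-\infty$. Fix $w_0\in W$, put $y_0=\tau(w_0)$, and choose a connected coordinate neighborhood $U$ of $y_0$ on which $u$ has both of its defining representations at once: $u=v+a$ with $v$ psh and $a$ smooth, and $u=c\log|f|^2+b$ with $f=(f_1,\dots,f_m)$ holomorphic, $c>0$ (a positive integer in the integral case) and $b$ bounded. Pick a relatively compact connected neighborhood $V$ of $w_0$ with $\tau(\overline V)\subseteq U$.

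The elementary points are that on $V$ the functions $a\circ\tau$ and $f_j\circ\tau$ are smooth, resp.\ holomorphic, and $b\circ\tau$ is bounded. The one fact that needs an argument is that $v\circ\tau$ is either psh on $V$ or identically $-\infty$ there; for this I would write $v$ as a decreasing limit of smooth psh functions $v_k$ on a neighborhood of the compact set $\tau(\overline V)$, note that each $v_k\circ\tau$ is smooth with $dd^c(v_k\circ\tau)=\tau^*(dd^c v_k)\ge 0$, hence psh, and recall that a decreasing limit of psh functions on the connected set $V$ is psh or $\equiv-\infty$. Two cases then arise. If $v\circ\tau\equiv-\infty$ on $V$, then $\tau^*u\equiv-\infty$ on $V$ and, since $\{u=-\infty\}=\{f=0\}$ on $U$, also $f\circ\tau\equiv 0$ on $V$. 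If instead $v\circ\tau$ is psh on $V$, then $\tau^*u=v\circ\tau+a\circ\tau$ is qpsh on $V$; moreover $f\circ\tau\not\equiv 0$ on $V$ (otherwise $\tau^*u\equiv-\infty$), so $\tau^*u=c\log|f\circ\tau|^2+b\circ\tau$ exhibits $\tau^*u$ as qpsh with analytic singularities on $V$, with the \emph{same} constant $c$, and hence with integral singularities whenever $u$ has.

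It remains to globalize. Let $W_{-\infty}$ be the set of points admitting a neighborhood on which $\tau^*u\equiv-\infty$; it is open by construction, and its complement is open as well, since at a point outside it the neighborhood $V$ above must be in the second case, so $\tau^*u$ is qpsh — and therefore not identically $-\infty$ on any nonempty open subset — throughout $V$. As $W$ is connected, either $W_{-\infty}=W$, giving $\tau^*u\equiv-\infty$, or $W_{-\infty}=\emptyset$, in which case the local conclusion of the previous paragraph holds near every point and $\tau^*u$ is qpsh with (integral) analytic singularities on $W$. I expect the only genuinely delicate point to be exactly this dichotomy — excluding the possibility that $\tau^*u$ is $-\infty$ on one open part of $W$ and not identically $-\infty$ on another; the remaining steps are just an unwinding of the definitions together with the standard behaviour of psh functions under holomorphic pullback.
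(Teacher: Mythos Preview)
Your proof is correct. The paper does not give a detailed argument but simply remarks that the lemma ``follows directly from the definitions''; your proposal is precisely a careful unpacking of those definitions together with the standard fact that the pullback of a psh function by a holomorphic map is psh or identically $-\infty$, so it is the same approach made explicit.
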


\begin{lma}\label{proviant}
If $\psi_1$ and $\psi_2$ are qpsh (with integral analytic singularities),
then $\psi_1+\psi_2$ and
$
\log(e^{\psi_1}+e^{\psi_2})
$
are qpsh (with integral analytic singularities).
\end{lma}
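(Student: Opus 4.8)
The two assertions of Lemma~\ref{proviant} are local, so I would fix a coordinate chart $U\subset\C^n$ in which $\psi_i=v_i+a_i$ with $v_i$ psh and $a_i$ smooth, and, in the analytic-singularities case, in which $\psi_i=c_i\log|f_i|^2+b_i$ with $f_i$ a holomorphic tuple, $c_i>0$ and $b_i$ bounded. For $\psi_1+\psi_2$ the qpsh statement is immediate, since a sum of psh functions is psh and a sum of smooth functions is smooth. The substance of the lemma is therefore (a) that $u:=\log(e^{\psi_1}+e^{\psi_2})$ is qpsh, and (b) the bookkeeping showing that the analytic-singularities structure is preserved under both operations.

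For (a) I would use the elementary pointwise identity
\[
\log(e^{w_1}+e^{w_2})=\sup_{0\le t\le 1}\bigl(tw_1+(1-t)w_2-t\log t-(1-t)\log(1-t)\bigr),
\]
the supremum being attained at $t=e^{w_1}/(e^{w_1}+e^{w_2})$ (Gibbs' variational formula; here $0\log 0:=0$). If $w_1,w_2$ are psh, the right-hand side is a supremum of psh functions which is locally bounded above (by $\max(w_1,w_2)+\log 2$) and which equals the usc function $\log(e^{w_1}+e^{w_2})$, hence is psh. Now on $U$ one chooses a constant $C$ so large that $w_i:=\psi_i+C|z|^2$ is psh for $i=1,2$; since $\log(e^{w_1}+e^{w_2})=C|z|^2+u$, it follows that $u$ is qpsh. (Equivalently, for smooth $\psi_i$ one has $dd^c u=S^{-1}\sum_i e^{\psi_i}(dd^c\psi_i+d\psi_i\w d^c\psi_i)+S^{-2}\,dS\w d^c S\ge S^{-1}\sum_i e^{\psi_i}\,dd^c\psi_i$ with $S=e^{\psi_1}+e^{\psi_2}$, which yields $dd^c u\ge -C\,dd^c|z|^2$ whenever $dd^c\psi_i\ge -C\,dd^c|z|^2$; the smoothness assumption is then removed by a standard decreasing regularization.)

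For (b) the only mechanism needed is that for a positive integer $c$ the function $|f|^{2c}=(\sum_k|f_k|^2)^c$ is bounded above and below by positive constants times $|g|^2$, where $g$ is the tuple of degree-$c$ monomials in the components of $f$; after taking logarithms this reads $c\log|f|^2=\log|g|^2+O(1)$. Applied to $\psi_1+\psi_2=c_1\log|f_1|^2+c_2\log|f_2|^2+(b_1+b_2)$ this gives $\psi_1+\psi_2=\log\bigl(|g_1|^2|g_2|^2\bigr)+O(1)=\log|g_1\otimes g_2|^2+O(1)$, i.e.\ integral analytic singularities with $c=1$. For $u$, the relations $e^{\psi_i}=e^{b_i}|f_i|^{2c_i}$ with $b_i$ bounded show that $e^{\psi_1}+e^{\psi_2}$ is comparable, up to positive multiplicative constants, to $|f_1|^{2c_1}+|f_2|^{2c_2}$ and hence to $|g_1|^2+|g_2|^2=|g|^2$ for the concatenated tuple $g=(g_1,g_2)$; thus $u=\log|g|^2+O(1)$. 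The rational case is handled identically after first clearing a common denominator $N$ of the $c_i$ (yielding $c=1/N$). I expect this last bookkeeping — keeping track that every comparison is by multiplicative constants, so that all remainders stay bounded after taking logarithms, and observing that it is exactly here, and only here, that integrality (or at least rationality) of the $c_i$ enters — to be the part that requires the most care, even though no single step is deep.
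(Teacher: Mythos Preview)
Your argument is correct and follows the same route as the paper: the paper's proof is simply to note that the psh case is well-known and that one reduces to it by choosing a single smooth $\phi$ (your $C|z|^2$) with $\psi_j+\phi$ psh for both $j$, after which $\log(e^{\psi_1+\phi}+e^{\psi_2+\phi})=\phi+\log(e^{\psi_1}+e^{\psi_2})$ carries the result over; you have supplied the details the paper leaves implicit, including the Gibbs variational formula and the monomial-tuple bookkeeping for the integral-singularities clause.

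One minor slip in your parenthetical alternative computation: the identity for $dd^c u$ should have $-S^{-2}\,dS\wedge d^cS$, not $+$, since $dd^c\log S=S^{-1}dd^cS-S^{-2}dS\wedge d^cS$. The desired inequality $dd^cu\ge S^{-1}\sum_i e^{\psi_i}\,dd^c\psi_i$ is still true, but it now rests on the Cauchy--Schwarz-type bound $S\sum_i e^{\psi_i}\,d\psi_i\wedge d^c\psi_i\ge dS\wedge d^cS$ rather than on merely discarding positive terms.
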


Lemma~\ref{lemur} follows directly from the definitions. Lemma~\ref{proviant}
is well-known for psh $\psi_j$ (with integral analytic singularities). It is readily
checked that it also holds for qpsh (with integral analytic singularities), using
the characterization that functions $\psi_j,\ j=1,2,$ are qpsh (with integral
analytic singularities) if and only if locally there is a smooth function $\phi$ such that
$\psi_j':=\psi_j + \phi,\ j=1,2,$ are psh (with integral analytic singularities).





\section{\qas\ bundles}\label{qgnsec}

We say that a function is \qas\  if it is qpsh and has integral analytic singularities. We can assume then that $c=1$ in \eqref{totem1}.

\begin{df}
A singular metric $h$ on a holomorphic vector bundle $E\to X$ is \qas\ if the function $\alpha\mapsto \log|\alpha|^2_h$ is
\qas\ on the total space $E$.
In that case $(E, h)$ is said to be \qas.
The degeneracy locus of $h$ is the set $Z\subset X$ of points $x$ such that $h(x)$ is degenerate.
\end{df}

Notice that a qas metric $h$ is non-degenerate if and only if
$\alpha\mapsto\log|\alpha|^2_h$ is locally bounded on $E\setminus 0_E$.

\begin{ex}\label{plaskdamm}
Let $L$ be a line bundle with metric $h=e^\psi$. We claim that $(L,e^\psi)$ is qas if and only if $\psi$ is \qas\ on $X$.
Given a local frame,  $L=U\times \C_\alpha$ and
$\log |\alpha|^2_{h(x)}=\log|\alpha|^2+\psi(x)$. If this function is \qas\ on $L$, then by Lemma~\ref{lemur}
its pullback to $U=U\times \{1\}$, i.e., $\psi$, is \qas.
Conversely, if $\psi(x)$ is \qas\, then $\log|\alpha|^2+\psi(x)$ is \qas\ on $U\times \C_\alpha$,
cf.~Lemmas~\ref{lemur} and \ref{proviant}.
\end{ex}

Sometimes we suppress $h$ and say that $E$ is a \qas\ bundle, and write $|\alpha|$ rather than $|\alpha|_h$.

\begin{lma}\label{knepig0}
(i)  If $\tau\colon W\to X$ is holomorphic and $E\to X$ is \qas, then $\tau^*E\to W$ is \qas.

\smallskip
\noindent
(ii) If $g\colon E\to F$ is a bundle morphism and $F$ is \qas, then $E$ is \qas\ if equipped with the induced
metric $|\alpha|:=|g(x)\alpha|$.

\smallskip
\noindent
(iii) If $(E_j,h_j)$ are qas bundles over $X$,  $j=1,2$, then $(E_1\oplus E_2, h_1\oplus h_2)$ is \qas.

\smallskip
\noindent
(iv)
If $E\to X$ has two \qas\ metrics $h_1$ and $h_2$, then $h_1+h_2$ is a \qas\ metric on $E$.

\smallskip
\noindent
(v)
If $L$ is a \qas\ line bundle, and $E$ is a \qas\ vector bundle, then $E\otimes L$ is \qas.
\end{lma}

\begin{proof}
If $\tau\colon W\to X$, then $|\tau^*\alpha|^2_{\tau^*h}=\tau^*|\alpha|^2_h$ so (i) follows from
Lemma~\ref{lemur}.
Part (ii) also follows from Lemma~\ref{lemur} by considering the holomorphic mapping $E\to F$, $(x,\alpha)\mapsto (x,g(x)\alpha)$.

Locally $E_j=X\times \C_{\alpha_j}^{r_j}$. If $\log |\alpha_j|^2$ is \qas\ on $E_j$, $j=1,2$,
then so is its  pullback to $E_1\oplus E_2$.
By Lemma~\ref{proviant},
$\log(|\alpha_1|^2 +|\alpha_2|^2)$ is \qas\ on $E_1\oplus E_2$. Thus (iii) holds.
We get (iv) by applying (ii) and (iii) to $(E,h_1)$, $(E,h_2)$ and
the mapping
$E\to E\oplus E, \   \alpha\mapsto (\alpha,\alpha).$
For (v), note that if $e_L$ is a local frame for $L$ and $e^{\psi_L}$ the corresponding representation
of the metric, then
$\log |\alpha \otimes e_L|^2 = \log |\alpha|^2 + \psi_L$, so
(v) follows from Lemma~\ref{proviant}.
\end{proof}

\begin{remark}
We say that a function is $\Q$-\qas/$\R$-\qas\ if
it is qpsh with analytic singularities, and the local positive constants $c$ in \eqref{totem1}
are rational/real.  We say that a vector bundle is
$\Q$-\qas/$\R$-\qas\ if $\alpha \mapsto \log |\alpha|^2$ is $\Q$-\qas/$\R$-\qas. 
Lemma~\ref{lemur} holds for $\Q$-\qas\ and $\R$-\qas\ functions,
and consequently Lemma~\ref{knepig0} (i)-(ii) holds for $\Q$-\qas\ and $\R$-\qas\ bundles.
Lemma~\ref{proviant} also holds for $\Q$-\qas\ functions, but not
for $\R$-\qas\ functions in general. In fact,  
$\log (|f_1|^{2c_1}+|f_2|^{2c_2})$ does not have analytic singularities if $c_1,c_2$ are arbitrary real numbers. 
%
Therefore, the analogues of Lemma~\ref{knepig0} (iii)-(v) hold for $\Q$-\qas\ bundles,
but not necessarily for $\R$-\qas\ bundles.
\end{remark}

\begin{lma}\label{orkan}
Let $E$ be a holomorphic vector bundle on $X$ with a smooth Hermitian metric $h$.
Any point $x_0 \in X$ has a \nbh $U$ such that
$h' = h e^{A|x|^2}$ is Griffiths negative if $A$ is large enough, where
$x$ denotes a local coordinate system near $x_0$.
\end{lma}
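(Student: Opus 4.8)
The plan is to reduce the Griffiths negativity of $h'=he^{A|x|^2}$ to a positivity statement on the projectivization $p\colon\Pk(E)\to X$ and then to conclude by a compactness argument. After shrinking, I may take $U$ to be a relatively compact coordinate neighbourhood of $x_0$ over which $E$ is trivialized, so that $\overline U$ is compact. Since $\log|\cdot|^2_{h'}=\log|\cdot|^2_h+A|x|^2$, with $|x|^2$ pulled back by $\pi\colon E\to X$, and this function is smooth on $E|_U\setminus 0_E$, the metric $h'$ is Griffiths negative over $U$ exactly when $dd^c\log|\alpha|^2_h+A\,dd^c|x|^2\ge 0$ there. As recalled above, $dd^c\log|\alpha|^2_h$ is a well-defined smooth $(1,1)$-form $\omega_0$ on $\Pk(E)$, and $dd^c|x|^2$, being $\pi^*$ of a form on the base, descends to $\eta:=p^*(dd^c|x|^2)$ on $\Pk(E)$. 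Since $E\setminus 0_E\to\Pk(E)$ is a submersion, $dd^c\log|\alpha|^2_h+A\,dd^c|x|^2\ge 0$ on $E|_U\setminus 0_E$ if and only if $\omega_0+A\eta\ge 0$ on $\Pk(E|_U)$, so it suffices to find $A$ with $\omega_0+A\eta\ge 0$ over the compact set $\overline U$.

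The two facts that make this possible are: (i) $\eta\ge 0$ and at every point $\ker\eta=T_{\Pk(E)/X}$, because $dd^c|x|^2$ is a strictly positive form on the base pulled back by $p$; and (ii) the restriction of $\omega_0$ to each fibre $p^{-1}(x)=\Pk(E_x)$ is strictly positive — it is the Fubini–Study form of the positive-definite Hermitian form $h_x$ on $E_x$, equivalently minus the curvature of the induced metric $e^\psi$ on $L_E|_{p^{-1}(x)}=\Ok(-1)$, which is a negative line bundle on $\Pk(E_x)$ (concretely, $\log(\alpha^*H_x\alpha)$ is strictly psh on each affine chart, by the Cauchy–Schwarz inequality). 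Granting (i)–(ii), fix an auxiliary Hermitian metric on $\Pk(E|_{\overline U})$ and argue by contradiction: if no $A$ works, choose $A_\nu\to\infty$ and unit tangent vectors $v_\nu$ with $(\omega_0+A_\nu\eta)(v_\nu,\bar v_\nu)\le 0$; boundedness of $\omega_0$ on the compact $\Pk(E|_{\overline U})$ forces $\eta(v_\nu,\bar v_\nu)\to 0$, so a subsequential limit $v$ is a unit vector with $\eta(v,\bar v)=0$, hence a nonzero vertical vector, whence $\omega_0(v,\bar v)>0$ by (ii) — contradicting $\omega_0(v_\nu,\bar v_\nu)\le 0$ in the limit. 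Thus some $A$ works (in fact all large $A$, with strict positivity).

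A more computational alternative, closer to the local arguments elsewhere in the paper, is to work in the trivialization with $N(x,\alpha)=\alpha^*H(x)\alpha$: the complex Hessian $L_0$ of $\log N$ is invariant under $(x,\alpha)\mapsto(x,t\alpha)$ with the fibre directions rescaled by $t$, so one only needs $L_0+A\,\mathrm{diag}(I_n,0)\ge 0$ at points with $|\alpha|=1$, which together with $\overline U$ form a compact set; the Euler vector $(0,\alpha)$ lies in $\ker L_0$ since $\iota_R\partial\log N\equiv 1$, so only the base–base block $P$, the mixed block $Q$ and the part of the fibre block $S$ transverse to $\alpha$ enter, and on that compact set $S(\zeta,\bar\zeta)\ge c_0|\zeta|^2$ for $\zeta\perp_H\alpha$ while $|P|\le C_2$ and $|Q|\le C_1$, so $(A-C_2)|\xi'|^2-2C_1|\xi'||\zeta|+c_0|\zeta|^2\ge0$ once $A\ge C_2+C_1^2/c_0$. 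Either way, the only genuinely substantive point is the strict positivity of $\omega_0$ along the fibres — equivalently of $S$ transverse to the Euler direction — which rests on Cauchy–Schwarz for $\la\cdot,\cdot\ra_H$; the remaining ingredients are homogeneity and compactness, and I do not expect any real obstacle here.
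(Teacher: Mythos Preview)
Your argument is correct, but it takes a substantially different route from the paper's. The paper gives a one-line curvature computation: writing $\Theta_h=\sum\Theta_{jk}\,dx_j\wedge d\bar x_k$, one has $\Theta_{h'}=\Theta_h-A\,\omega_x$, and Griffiths negativity amounts to $\sum(\Theta_{jk}s,s)v_j\bar v_k\le A|s|^2|v|^2$, which holds on a relatively compact $U$ once $A$ dominates the sup of $\|\Theta_h\|$. You instead invoke the equivalence ``$h'$ Griffiths negative $\Longleftrightarrow$ $\log|\cdot|^2_{h'}$ psh on $E\setminus 0_E$'' (cited in the paper from \cite{BePa} immediately after the lemma), descend $dd^c\log|\alpha|^2_h$ to $\omega_0$ on $\Pk(E)$, and run a compactness/contradiction argument using that $\eta=p^*dd^c|x|^2$ is positive with vertical kernel while $\omega_0$ is fiberwise Fubini--Study. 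Your second ``computational alternative'' is closer in spirit to the paper's proof but still heavier than needed. Both approaches are valid; the paper's is the minimal one (pure linear algebra plus boundedness of a smooth tensor on a compact set), whereas yours ties the lemma to the projectivization framework used later and would adapt to situations where the curvature tensor is not directly available but the induced metric on $L_E$ is.
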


\begin{proof}
If the curvature $\Theta_h$ of $h$ is $\sum \Theta_{jk} dx_j{\wedge}d\bar{x}_k$,
then $h$ is Griffiths negative if $\sum (\Theta_{jk} s, s) v_j \bar{v}_k \leq 0$ for any section $s$ of $E$ and
tangent vector $v = \sum v_j \frac{\partial}{\partial x_j}$.
Notice that $\Theta_{h'} = \Theta_{h} - I_EA\omega_x $,
where $\omega_x$ is the standard Kähler form on $U \subset \C^N_x$ and $I_E$ is the identity endomorphism on $E$.
By possibly shrinking $U$, we may choose $A$ such that
$|\sum (\Theta_{jk} s, s) v_j \bar{v}_k| \leq A|s|^2 |v|^2$.
Then $h'$ is Griffiths negative.
\end{proof}

The statement means that locally there is a negative trivial line bundle $L_A\to U$ such that $E\otimes L_A$ is Griffiths negative.
It is well-known that if $(E,h)$ is Griffiths negative, then $\log|\alpha|^2_h$ is psh on $E$, see, e.g., \cite{BePa}. Thus we have

\begin{cor}\label{sinister}
Any vector bundle with a smooth Hermitian metric is \qas.
\end{cor}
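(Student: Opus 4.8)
The plan is to derive Corollary~\ref{sinister} directly from Lemma~\ref{orkan} together with the cited fact that Griffiths negativity of a metric $h'$ implies that $\alpha\mapsto\log|\alpha|^2_{h'}$ is psh on the total space. Since being \qas\ is a local condition on the total space $E$ (it suffices to check it near each point $x_0\in X$ and, since everything is invariant along fibers, near each $\alpha$ in the fiber), I would fix $x_0\in X$ and a local coordinate system $x$ near $x_0$ on a \nbh $U$, trivializing $E$ over $U$.

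First I would apply Lemma~\ref{orkan} to obtain $A\gg 1$ such that $h'=he^{A|x|^2}$ is Griffiths negative on $U$ (after possibly shrinking $U$). By the well-known fact quoted just before the corollary (see \cite{BePa}), $\psi':=\log|\alpha|^2_{h'}$ is psh on $E|_U\setminus 0_E$, indeed on all of $E|_U$. Next I would observe that $\log|\alpha|^2_h=\log|\alpha|^2_{h'}-A|x|^2=\psi'-A|x|^2$, so $\log|\alpha|^2_h$ differs from a psh function by the smooth function $-A|x|^2$; hence it is qpsh on $E|_U$. Finally, since $h$ is smooth and strictly positive definite, in the local frame the matrix $(h_{ij})$ has smooth entries and $\log|\alpha|^2_h=\log\big(\sum h_{ij}\alpha_i\bar\alpha_j\big)$ is in fact smooth on $E|_U\setminus 0_E$, so it is bounded above a \nbh of any point of the zero section; thus its unbounded locus is empty and it trivially has analytic singularities of the form \eqref{totem1} with $c$ any positive integer and $b$ bounded (locally near each $\alpha\neq 0$ one can even take $f\equiv 1$, $c=1$). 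Therefore $\log|\alpha|^2_h$ is \qas, i.e., qpsh with integral analytic singularities, on $E$, which is exactly the assertion.

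I do not expect a genuine obstacle here: the content is entirely contained in Lemma~\ref{orkan} and the cited psh-ness of $\log|\alpha|^2$ for Griffiths negative metrics. The only point requiring a little care is bookkeeping of the word ``integral'' in ``integral analytic singularities'': since a smooth metric has empty unbounded locus, the constant $c$ in \eqref{totem1} may be taken to be any positive integer (the singular part $c\log|f|^2$ is simply absent, $f$ being invertible), so the integrality condition is vacuous. One should also note that patching over an open cover of $X$ is immediate, as being \qas\ is local on the total space.
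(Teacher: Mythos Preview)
Your approach is exactly the paper's: apply Lemma~\ref{orkan}, invoke the fact from \cite{BePa} that Griffiths negativity makes $\log|\alpha|^2_{h'}$ psh, and subtract the smooth term $A|x|^2$ to get qpsh. That part is fine.

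There is, however, a genuine slip in your treatment of the analytic-singularities condition. The definition requires $\alpha\mapsto\log|\alpha|^2_h$ to be \qas\ on the \emph{total space} $E$, not just on $E\setminus 0_E$. Near the zero section the function is not bounded: $\log|\alpha|^2_h\to-\infty$ as $\alpha\to 0$. So your claim that ``the singular part $c\log|f|^2$ is simply absent, $f$ being invertible'' is false there, and the parenthetical ``locally near each $\alpha\neq 0$'' does not cover the relevant case. You are confusing the unbounded locus $Z\subset X$ (which is indeed empty for a smooth metric) with the singular set of $\log|\alpha|^2_h$ on $E$ (which is $0_E$).

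The fix is immediate: in a local frame, positive definiteness of $h$ gives $|\alpha|^2_h\sim |\alpha_1|^2+\cdots+|\alpha_r|^2$, so near any point of $0_E$ one has $\log|\alpha|^2_h=\log|f|^2+b$ with $f=(\alpha_1,\ldots,\alpha_r)$, $c=1$, and $b=\log(|\alpha|^2_h/|\alpha|^2_0)$ bounded. This is the representation \eqref{totem1} with integral $c$. The paper itself leaves this verification implicit, but your explicit (and incorrect) justification should be replaced by this one-line argument.
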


\begin{lma}\label{knepig1}
Assume that $h$ is a qas metric. For any $x_0\in X$ there is a \nbh $U\subset X$, holomorphic $r$-tuples $f_k(x)$, $k=1,\ldots, m$, and $C>0$, such that in $E|_U\simeq U\times \C^r_\alpha$, 
\begin{equation}\label{garn}
(1/C)|\alpha|_{h(x)} \le \sum_{k=1}^m |f_k(x)\cdot \alpha)|\le C |\alpha|_{h(x)}.
\end{equation}
\end{lma}

\begin{proof}
Let $x_0 = 0 \in \C^n$ and let $|\alpha|$  be the standard norm on $\C^r_\alpha$. The assumption that $\log|\alpha|^2_{h(x)}$ has analytic singularities means that
there are holomorphic functions $F_k(x,\alpha)$ in a set $|x|\le c, |\alpha|\le c,\ c > 0$, such that
\eqref{garn} holds there with $F_k(x,\alpha)$ instead of $f_k(x)\cdot\alpha$. Since $F_k(x,0)=0$ we have that $F_k(x,\alpha)=f_k(x)\cdot \alpha+F'_k$, where
$F'_k$ is $\Ok(|\alpha|^2)$. 
We write the tuple as $F=f(x)\cdot\alpha+F'$. For $x\le c$, $|\alpha|=c$ and $|\lambda|\leq 1$, 
$
F(x,\lambda \alpha)= \lambda f(x)\cdot\alpha+\Ok(|\lambda|^2).
$
Thus
$$
|\lambda||\alpha|_{h(x)}\le C|F(x,\lambda \alpha)|\le C| \lambda|| f(x)\cdot\alpha|+C|\lambda|^2.
$$
Letting $\lambda\to 0$ we get
$$
|\alpha|_{h(x)}\le C| f(x)\cdot \alpha|, \quad |\alpha|=c, \ |x|\le c.
$$
By homogeneity this holds for all $\alpha$. In the same way we get the opposite inequality. 
Thus \eqref{garn} holds with $f(x)\cdot\alpha$ instead of $F(x,\alpha)$.
Moreover,
$f(x)\cdot\alpha$ is defined in the whole set $U\times \C^r$ and \eqref{garn} holds there by homogeneity.
 \end{proof}
In $U$ thus $|\cdot|_h^2 $ is comparable to the metric  in Example~\ref{modelex}, if
$F=U\times \C^m$ with trivial metric, and $g=(f_1,\ldots, f_m)$.

\begin{prop}\label{taubunt}
If $E\to X$ is \qas, then $L_E\to \Pk(E)$ is \qas.
\end{prop}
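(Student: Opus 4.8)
The goal is to show that the tautological line bundle $L_E\to\Pk(E)$, equipped with the induced metric $e^\psi$, is \qas, i.e., that in a local frame for $L_E$ the weight $\psi$ is qpsh with integral analytic singularities on $\Pk(E)$. Recall from \eqref{phiform} that in the local frame $1/\alpha_j$ for $L_E$ we have $\psi=\log(|\alpha|^2_h/|\alpha_j|^2)$, where $(x,\alpha)$ are fiber coordinates on $E$ over a trivializing chart $U$. So everything reduces to a local statement about this explicit function on (an open subset of) $\Pk(E)|_U\subset U\times\Pk^{r-1}$.

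First I would fix $x_0\in X$ and apply Lemma~\ref{knepig1} to get a \nbh $U$ and holomorphic $F_k(x,\alpha)=f_k(x)\cdot\alpha$, $k=1,\ldots,m$, linear in $\alpha$, with $\sum_k|F_k(x,\alpha)|\sim|\alpha|_{h(x)}$ on $U\times\C^r_\alpha$. Squaring and taking logarithms, $\log|\alpha|^2_h=\log\big(\sum_k|F_k(x,\alpha)|^2\big)+O(1)$, where the $O(1)$ term is a bounded function. Hence
\[
\psi=\log\Big(\sum_{k=1}^m|F_k(x,\alpha)|^2\Big)-\log|\alpha_j|^2+b,
\]
with $b$ bounded. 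Now on the chart of $\Pk(E)|_U$ where $\alpha_j\ne 0$, the functions $F_k(x,\alpha)/\alpha_j=f_k(x)\cdot(\alpha/\alpha_j)$ are holomorphic (they are homogeneous of degree $0$ in $\alpha$, hence descend to $\Pk(E)$), call them $G_k(x,\zeta)$ where $\zeta=\alpha/\alpha_j$ are affine coordinates on the fiber $\Pk^{r-1}$. Then
\[
\psi=\log\Big(\sum_{k=1}^m|G_k(x,\zeta)|^2\Big)+b,
\]
which is exactly of the form \eqref{totem1} with $c=1$: it is $\log|G|^2$ plus a bounded term, where $G=(G_1,\ldots,G_m)$ is a tuple of holomorphic functions on this coordinate patch of $\Pk(E)$.

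It remains to check the qpsh part, i.e., that $\psi$ (not merely its singular part) differs from a psh function by a smooth one, locally on $\Pk(E)$. Since $\log|\alpha|^2_h$ is \qas\ on $E$ by hypothesis, Lemma~\ref{lemur} applied to the holomorphic section $\Pk(E)|_U\supset\{\alpha_j\ne0\}\to E$, $(x,[\alpha])\mapsto(x,\alpha/\alpha_j)$ (i.e.\ restricting $\log|\cdot|^2_h$ to the image of $1/\alpha_j$), shows that $\psi=\log(|\alpha|^2_h/|\alpha_j|^2)$ is \qas\ on that chart of $\Pk(E)$ — or one can argue directly: write $\log|\alpha|^2_h=u+a$ with $u$ psh on $E$ and $a$ smooth; on the slice $\alpha=\zeta/1$ (the $j$-th affine chart, normalizing $\alpha_j=1$) we get $\psi=u(x,\zeta)+a(x,\zeta)$, and $u(x,\zeta)$ is psh since it is the restriction of the psh function $u$ to a complex submanifold, while $a(x,\zeta)$ is smooth. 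Combining with the singularity analysis above, on each coordinate patch $\{\alpha_j\ne 0\}$ of $\Pk(E)|_U$ the function $\psi$ is qpsh with integral analytic singularities. Since these patches (over all $j$ and all trivializing $U$) cover $\Pk(E)$, and the \qas\ property is local, $(L_E,e^\psi)$ is \qas.

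**Main obstacle.** The only real subtlety is bookkeeping: making sure that after passing from the homogeneous coordinates $\alpha$ on the total space $E$ to affine coordinates $\zeta$ on $\Pk(E)$, the holomorphic tuple $F_k(x,\alpha)=f_k(x)\cdot\alpha$ genuinely descends to holomorphic functions on $\Pk(E)$ (which it does, because dividing by $\alpha_j$ makes it $0$-homogeneous) and that the comparison $\sum|F_k|\sim|\alpha|_h$ survives this division to give $\sum|G_k(x,\zeta)|^2\sim|\alpha|_h^2/|\alpha_j|^2=e^{\psi}\cdot e^{-b'}$ with $b'$ bounded. Both are immediate from homogeneity. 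The psh/qpsh part is automatic from restricting to a submanifold, so I expect no serious difficulty there.
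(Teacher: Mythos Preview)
Your argument is correct, and at its core it is the same as the paper's: the paper simply observes that by \eqref{phiform}, $\psi$ is the pullback of $\log|\alpha|^2_h$ under the holomorphic section $(x,[\alpha])\mapsto(x,\alpha/\alpha_j)$ from the affine chart $\{\alpha_j\neq0\}$ of $\Pk(E)$ into $E$, and then Lemma~\ref{lemur} gives that $\psi$ is \qas. You state exactly this in your second paragraph, so your proof contains the paper's.

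The first half of your argument, via Lemma~\ref{knepig1}, is however redundant. Lemma~\ref{lemur} already delivers the full \qas\ property (qpsh \emph{and} integral analytic singularities) in one stroke, so there is no need to separately exhibit the singular part as $\log|G|^2$ and then verify qpsh afterwards. Your explicit description of the $G_k$ is correct and does no harm, but it is not needed for the proof.
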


\begin{proof}
By Lemma~\ref{knepig0}~(i), $p^*E\to \Pk(E)$ is \qas, and by (ii) its subbundle $L_E$ is \qas.
\end{proof}


\begin{remark}\label{viktig}
Almost all results in this paper assuming that $E$ is \qas\ only depend on that $L_E$ is \qas.
At a few places, Lemma~\ref{knepig0}~(ii)-(iii) and Lemma~\ref{knepig1},
and the non-negativity of multiplicities in Theorem~\ref{main1}, 
we use the (possibly) stronger assumption that $E$ is \qas.
%
Also if $L_E$ has a psh metric with analytic singularities, then
the multiplicities are non-negative. Indeed, locally we can choose a negative reference metric;
then $s_k(E,h,h_0)$ is a positive $(k,k)$-current, and hence it has
non-negative Lelong numbers.
%
%
In \cite{LRSW}, the standing assumption is that $E$ is equipped with a metric $h$ such that
$L_E$ is psh with analytic singularities.
By the arguments in the proof of  \cite[Proposition~5.4]{LRSW} it follows that
$L_E$ is qas if and only if $\log |\alpha|_h^2$ is qas on
$E \setminus 0_E$.
%
Following the proof of \cite[Proposition~5.2]{LRSW}, one can
show that if $L_E$ is \qas, then $\log |\alpha|_h^2$ is
qpsh on all of $E$. However we do not know if $\log |\alpha|_h^2$
has analytic singularities over $0_E$, so $E$ being \qas\ is possibly a stronger
assumption, unless $\rank E=1$, cf.~Example~\ref{plaskdamm}.
%
%
\end{remark}

\section{Generalized cycles and quasi-cycles}\label{qzsection}

Let $Y$ be a complex manifold or a reduced analytic space.
In \cite{AESWY} was introduced the $\Zk$-module $\GZ(Y)$ of generalized cycles,
which are closed currents on $Y$. This module is an extension of the module $\Z(Y)$ of analytic cycles,
i.e., there is an injective mapping $i\colon \Z(Y)\to \GZ(Y)$, if the elements in
$\Z(Y)$ are identified with their associated Lelong currents.
It was shown that many properties of $\Z(Y)$ such as unique decomposition into irreducible components,
well-defined integer multiplicities at each point etc., hold for $\GZ(Y)$ as well.

Recall that if $Y$ is a reduced analytic space, then smooth forms on $Y$ are
defined in terms of restrictions of smooth forms in local embeddings, see, e.g.,
\cite{ASSmooth}, and currents are  dual elements as in the smooth case. 

Let $\QZ(Y)$ be the $\mathbb{Z}$-module of locally finite sums of currents of the form
$\tau_* \gamma$, where $\tau \colon W \to Y$ is a proper holomorphic mapping, $W$ is a complex manifold,
and $\gamma$ is a Bedford-Taylor product, \cite{BT},
\begin{equation} \label{na}
	\gamma=dd^c b_1 \wedge \cdots \wedge dd^c b_p,
\end{equation}
on $W$, where $e^{b_j}$ are metrics on holomorphic line bundles $L_j$ over $W$
such that $b_j$ are locally bounded and qpsh.
We say that the elements in $\QZ(Y)$ are {\it quasi-cycles} on $Y$.

\begin{remark}
If we assume all $b_j$ be smooth we get precisely the module $\GZ(Y)$,
cf.~\cite[Remark~3.1]{AESWY}, which is thus a submodule of $\QZ(Y)$.
\end{remark}

\begin{remark}
One can just as well consider the corresponding $\Q$-module or $\R$-module that we denote by
$\QZ_\Q(Y)$ and $\QZ_\R(Y)$, respectively.
\end{remark}

The product \eqref{na} is a closed current of
order $0$ so it follows that any $\mu\in\QZ(Y)$ is a closed current of order $0$.
If $\tau \colon Y' \to Y$ is a proper map and $\mu \in \GGZ(Y')$, then $\tau_* \mu \in \GGZ(Y)$.

Most results about  $\GZ(Y)$ in \cite{AESWY} extend to $\QZ(Y)$,
with the same arguments, because of the following
three properties of mixed Monge-Ampère products of the form \eqref{na}:

\begin{lma}\label{elbas}
	Let $e^{b_1},\ldots,e^{b_k}$ be metrics on line bundles $L_1,\dots,L_k$ on a
    complex manifold $W$ such that $b_1,\dots,b_k$ are locally bounded and qpsh.

\smallskip
\noindent (i)	If $i\colon V\to W$ is an analytic subspace of positive codimension, then
	\begin{equation} \label{eq:BTprop1}
		\1_V dd^c b_1 \wedge \cdots \wedge dd^c b_k = 0.
	\end{equation}

\smallskip
\noindent
(ii)	If $i\colon V \to W$ is as in (i), then
	\begin{equation} \label{eq:BTprop2}
		dd^c b_1 \wedge \cdots \wedge dd^c b_k \wedge [V] =
		i_*( dd^c i^*b_1 \wedge \cdots \wedge dd^c i^*b_k).
	\end{equation}

\smallskip
\noindent
(iii)	If $W$ is compact and $k=\dim W$, then
	\begin{equation} \label{eq:BTprop3}
		\int_W dd^c b_1 \wedge \cdots \wedge dd^c b_k = \int_W c_1(L_1) \wedge \cdots \wedge c_1(L_k).
	\end{equation}
\end{lma}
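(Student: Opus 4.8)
The plan is to reduce everything to the classical Bedford--Taylor theory for locally bounded psh functions by the standard trick of subtracting a common smooth strictly psh correction term. Since each $b_j$ is qpsh and locally bounded, in a neighborhood of any point of $W$ we may choose a smooth strictly psh function $\phi$ (e.g.\ a large multiple of $|x|^2$ in a local embedding) such that $b_j' := b_j + \phi$ is psh and locally bounded for every $j$. Then $dd^c b_j = dd^c b_j' - dd^c\phi$, and expanding the product $dd^c b_1\wedge\cdots\wedge dd^c b_k$ multilinearly, every term is of the form $dd^c b_{i_1}'\wedge\cdots\wedge dd^c b_{i_p}'\wedge (dd^c\phi)^{k-p}$, i.e.\ a mixed Monge--Amp\`ere product of locally bounded psh functions wedged with a smooth closed form. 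Such products are well-defined closed currents of order $0$ by Bedford--Taylor, and the three asserted properties are known for them: (i) is the fact that the Monge--Amp\`ere product of locally bounded psh functions puts no mass on pluripolar (in particular analytic) sets of positive codimension, cf.\ \cite{Bl} or the classical references; (ii) is the corresponding restriction formula $dd^c b_1'\wedge\cdots\wedge dd^c b_p'\wedge[V] = i_*(dd^c i^*b_1'\wedge\cdots\wedge dd^c i^*b_p')$, valid since $i^*b_j'$ is again psh and locally bounded on $V$ (regarded in a local embedding), together with the analogous elementary identity for the smooth factors; and (iii) is Stokes/continuity of Monge--Amp\`ere together with the standard cohomological computation $\int_W (dd^c\beta_1)\wedge\cdots\wedge(dd^c\beta_k) = \int_W c_1(L_1)\wedge\cdots\wedge c_1(L_k)$ when $W$ is compact, $k=\dim W$, which holds because $dd^c b_j'$ (resp.\ $dd^c b_j$) represents $c_1(L_j)$ and Monge--Amp\`ere products of bounded psh functions depend only on the cohomology classes of the factors.

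Concretely, I would carry this out as follows. First, fix a point and choose the correction $\phi$ as above; note the statements are local for (i) and (ii), so this is harmless, and for (iii) one can take $\phi$ global by using that $W$ is compact and patching, or simply observe that since $W$ is compact each $L_j$ admits a smooth metric and one replaces $b_j$ by $b_j$ plus a smooth function, which only changes $dd^c b_j$ by an exact smooth form and hence does not affect $\int_W$. Second, for (i): expand the product into psh-times-smooth terms and apply the Bedford--Taylor zero-mass-on-analytic-sets statement to each, using that wedging with the smooth closed form $(dd^c\phi)^{k-p}$ commutes with multiplication by $\1_V$. Third, for (ii): again expand, and use the known restriction formula for locally bounded psh Monge--Amp\`ere products under a closed embedding $i$ together with the obvious fact that $i^*$ of the smooth factors behaves as expected; then reassemble the terms on $V$ into $dd^c i^*b_1\wedge\cdots\wedge dd^c i^*b_k$ using $i^*b_j = i^*b_j' - i^*\phi$ and multilinearity in reverse. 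Fourth, for (iii): by continuity of Monge--Amp\`ere products under decreasing sequences of smooth psh functions (which one can arrange locally and glue, or apply directly in the bounded setting via the Bedford--Taylor convergence theorem) reduce to the smooth case, where $\int_W \prod dd^c\beta_j = \int_W \prod c_1(L_j)$ is classical de Rham theory.

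The only genuine content here is the input from Bedford--Taylor theory; the rest is bookkeeping with the correction term. The step I expect to require the most care is (ii): one must make sure that in a local embedding the restriction of a locally bounded psh function stays locally bounded and psh on the (possibly singular) subspace $V$, and that the restriction formula holds in that generality — this is where one genuinely uses that $V$ is an analytic subspace and that $b_j'$ is bounded, so that $i^*b_j'$ is not identically $-\infty$ on any component and the Bedford--Taylor product $dd^c i^*b_1'\wedge\cdots$ is well-defined; the reassembly back to $dd^c i^*b_j$ via multilinearity is then formal. Everything else, including the harmless choice of $\phi$ and the cohomological computation in (iii), is routine.
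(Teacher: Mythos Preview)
For parts (i) and (ii) your reduction via a local smooth correction $\phi$ to the psh case is sound and matches the paper, which simply cites the standard facts (no mass on pluripolar sets for (i), and a regularization lemma from \cite{AWpascal} for (ii)). Your caveat about possibly singular $V$ in (ii) is well taken; the paper sidesteps this by appealing directly to that lemma rather than expanding multilinearly.

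For (iii) there is a genuine gap. On a compact $W$ there is no global smooth strictly psh function, so your suggestion to ``take $\phi$ global by patching'' fails; likewise, local decreasing smooth psh approximations do not glue to global smooth metrics on the $L_j$, so the convergence route you sketch does not go through as stated. Your other remark --- replace $b_j$ by a smooth metric $\beta_j$, which ``only changes $dd^c b_j$ by an exact smooth form'' --- is the right idea but not quite correct: $u:=b_j-\beta_j$ is a global bounded qpsh function, not smooth, so $dd^c b_j - dd^c\beta_j = dd^c u$ is only an exact current with a non-smooth potential, and it is not automatic that wedging with the remaining factors commutes with $dd^c$. What is needed, and what the paper supplies, is the identity
\[
dd^c(uT) \;=\; dd^c b_j\wedge T \;-\; dd^c\beta_j\wedge T
\]
for $T$ a closed positive current (locally a Bedford--Taylor product), which follows from \cite[Proposition~III.4.9]{Dem} after writing $u$ locally as (bounded psh) minus (smooth). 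With this in hand one replaces the $b_j$ by $\beta_j$ one at a time, using commutativity of Monge--Amp\`ere products, and applies Stokes' theorem at each step. Finally, note that your assertion ``Monge--Amp\`ere products of bounded psh functions depend only on the cohomology classes of the factors'' is exactly the content of (iii), so it cannot be invoked as input.
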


Here $c_1(L_j)$ denotes the Chern class of $L_j$, which can be represented by the
first Chern form associated with any smooth Hermitian metric on $L_j$.

\begin{proof}
It is well-known that products like \eqref{na},
do not charge pluripolar sets, see,  e.g., \cite[Proposition III.3.11]{Dem}.
In particular \eqref{eq:BTprop1} holds.
The equality \eqref{eq:BTprop2} is a local statement and follows by a suitable regularization,
see, e.g., \cite[Lemma~3.4]{AWpascal}.
The last equality, \eqref{eq:BTprop3}, is well-known in case all $b_j$ are smooth. Here is
a sketch of an argument in our case.
If $\beta_j$ are smooth metrics, then $b_j-\beta_j$ are global functions.
We prove by induction that \eqref{eq:BTprop3} holds if $b_1,\ldots, b_\ell$ are replaced by $\beta_1,\ldots \beta_{\ell}$ for $1\le \ell\le k$.
Assume it is true for	 $\ell-1$. Then by the commutativity of Monge-Ampère products it is enough to verify, with
$u=b_\ell-\beta_\ell$, that
$$
dd^c (uT)=dd^c b_\ell\w T- dd^c\beta_\ell\w T
$$
if $T$ is any positive closed current. This is a local statement and follows from \cite[Proposition~III.4.9]{Dem} since locally $u=\hat b_\ell-\hat\beta_\ell$,
where $\hat b_\ell$ is psh and bounded, and $\hat\beta_\ell$ is smooth.
Now the induction step follows from Stokes' theorem.
\end{proof}

Using \eqref{eq:BTprop1}, it follows, cf.~\cite[Lemma~3.2]{AESWY},  that
if $V \hookrightarrow Y$ is an analytic subspace and $\mu \in \GGZ(Y)$,
then $\1_V \mu \in \GGZ(Y)$.
Indeed, if
\begin{equation}\label{muform}
\mu = \sum (\tau_j)_* \gamma_j,
\end{equation}
where $\tau_j \colon W_j \to Y$, then
\begin{equation} \label{eq:restrGGZ}
    \1_V \mu = \sum_{\tau_j(W_j) \subset V} (\tau_j)_* \gamma_j.
\end{equation}

\begin{prop}\label{tax}
Assume that $i\colon V\to Y$ is an analytic subspace. Then the image of $i_*\colon \QZ(V)\to \QZ(Y)$
is precisely the elements in $\QZ(Y)$ that have Zariski support on $V$.
\end{prop}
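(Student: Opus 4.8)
The plan is to prove the two inclusions separately. One direction is essentially formal: if $\mu \in \QZ(V)$, say $\mu = \sum (\tau_j)_* \gamma_j$ with $\tau_j \colon W_j \to V$ proper and $\gamma_j$ of the form \eqref{na}, then composing with the (proper, since $V$ is closed) inclusion $i$ shows that $i_* \mu = \sum (i \circ \tau_j)_* \gamma_j$ is again of the form \eqref{muform}, hence lies in $\QZ(Y)$. Moreover each $i \circ \tau_j$ has image contained in $V$, so the Zariski support (the smallest analytic subset outside of which $\mu$ vanishes) of $i_*\mu$ is contained in $V$. So the image of $i_*$ is contained in the set of elements of $\QZ(Y)$ with Zariski support on $V$.

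For the converse, let $\mu \in \QZ(Y)$ have Zariski support on $V$, and write $\mu = \sum (\tau_j)_* \gamma_j$ as in \eqref{muform} with $\tau_j \colon W_j \to Y$. The key point is to discard the terms whose image is not contained in $V$. Following the argument recalled just before the proposition (the one leading to \eqref{eq:restrGGZ}), one has $\1_V \mu = \sum_{\gamma_j(W_j) \subset V} (\tau_j)_* \gamma_j$, using property \eqref{eq:BTprop1} of Lemma~\ref{elbas} to see that the terms with $\gamma_j(W_j) \not\subset V$ do not contribute to $\1_V\mu$ (a current of the form $(\tau_j)_*\gamma_j$ with $\gamma_j$ a Bedford--Taylor product puts no mass on the pluripolar, a fortiori proper analytic, preimage of $V$, once one knows $\gamma_j$ is not entirely supported over $V$). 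Since $\mu$ has Zariski support on $V$, we have $\mu = \1_V \mu$; indeed $\mu$ vanishes on $Y \setminus V$ and, being a closed current of order $0$ that puts no mass on $V \setminus V^{\mathrm{reg}}$-type thin sets, is determined by its restriction, so $\mu = \1_V\mu = \sum_{\gamma_j(W_j)\subset V}(\tau_j)_*\gamma_j$. For each remaining index $j$, since $\tau_j(W_j) \subset V$ and $W_j$ is a complex manifold, the map $\tau_j$ factors as $i \circ \tau_j'$ with $\tau_j' \colon W_j \to V$ proper (this uses that a holomorphic map into $Y$ landing in the closed analytic subspace $V$ factors holomorphically through $V$, by the universal property of the analytic subspace structure). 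Then $\mu = i_*\bigl(\sum_j (\tau_j')_* \gamma_j\bigr)$, and the inner sum is manifestly an element of $\QZ(V)$, so $\mu$ lies in the image of $i_*$.

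The main obstacle, and the step requiring the most care, is justifying that $\mu = \1_V \mu$ precisely when $\mu$ has Zariski support on $V$, together with the accompanying claim that the sum $\1_V\mu$ is obtained simply by dropping the terms with $\gamma_j(W_j) \not\subset V$. This is where one genuinely uses the Bedford--Taylor theory through Lemma~\ref{elbas}(i): a term $(\tau_j)_*\gamma_j$ with $\gamma_j$ a mixed Monge--Ampère product of locally bounded qpsh potentials is a closed current of order $0$ that does not charge pluripolar sets, so if its image is not contained in $V$ then $\1_V$ of it vanishes. One should also note that the Zariski support is well-defined for elements of $\QZ(Y)$ exactly because such currents are closed of order $0$ and admit a unique decomposition into components, analogously to \cite[Section~3]{AESWY} — this is the reason the paper insisted on recalling those properties. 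The factorization $\tau_j = i \circ \tau_j'$ through the analytic subspace $V$ is standard but should be invoked explicitly, since it is what lets us land back inside $\QZ(V)$ rather than merely $\QZ(Y)$; here it matters that $V$ may be non-reduced as an analytic subspace, and one uses the universal property of the subspace inclusion rather than a naive set-theoretic factorization.
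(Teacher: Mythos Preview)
Your argument is correct and is essentially the standard proof (the one the paper defers to in \cite[Proposition~3.6]{AESWY}): use \eqref{eq:restrGGZ} together with $\mu=\1_V\mu$ to reduce to terms with $\tau_j(W_j)\subset V$, then factor each $\tau_j$ through $i$. Two small clean-ups: the equality $\mu=\1_V\mu$ is immediate from $\mu$ being of order $0$ with support in $V$ (your aside about ``$V\setminus V^{\mathrm{reg}}$-type thin sets'' is unnecessary and a bit garbled), and the remark that $V$ ``may be non-reduced'' is off---in this paper $V$ is a reduced analytic subspace, and the factorization $\tau_j=i\circ\tau_j'$ for a holomorphic map from a manifold with image in $V$ is the elementary reduced statement.
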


The proof is the same as for the analogous statement for $\GZ(Y)$,
\cite[Proposition~3.6]{AESWY}.

\begin{ex}\label{pudel}
If $\mu\in \QZ(Y)$ has support at $x\in Y$, then since it has order $0$ it must be a number times $[x]$, the point evaluation current at $x$.
In view of Proposition~\ref{tax}
this number is a finite sum of numbers $\alpha=\int \tau_*\gamma$, where $\tau(W)=\{x\}$. Now
$$
\alpha=\int_Y \tau_*\gamma=\int_W \gamma,
$$
which is an intersection number, cf.~\eqref{eq:BTprop3}, and hence an integer.
\end{ex}

\subsection{Monge-Ampère type products as endomorphisms on $\QZ(Y)$} \label{MAhom}
Assume that $u$ is a \qas\ function on $Y$. 
Let  $f$ be the tuple in a local representation  \eqref{totem1}  in $U\subset Y$ (with $c=1$) and let
$\pi\colon U' \to U$ be the normalization of the blowup of $Y$ along (the ideal generated by) $f$.
If $\sigma$ is a section of the line bundle $\L\to U'$ defining the
exceptional divisor in $U'$, then $\pi^* f=\sigma f'$ where $f'$ is a non-vanishing tuple of sections of $\L^{-1}$.
If $\tilde f$ is the tuple from another representation
\eqref{totem1} in $U$, then $|\tilde f|\sim |f|$ and therefore this modification is also the normalization of the blowup along $\tilde f$.
This follows from the minimality property of such a modification.
Therefore we get a global normal modification $p\colon Y'\to Y$  such that any $f$ occurring in some local \eqref{totem1} is
principal. (If wanted, by taking a further modification, one can assume that $Y'$ is smooth but this is not necessary.)
 %
We thus get a global section $\sigma$ of a
holomorphic line bundle $\La\to Y'$
such that, if we equip $\La$ with some Hermitian norm,
%
\begin{equation}\label{lemur2}
\pi^*u=\log|\sigma|^2+a,
\end{equation}
where $a$ is qpsh and locally bounded.
By the
Poincar\'e-Lelong formula, $dd^c\log|\sigma|^2= [\dv \sigma]-c_1(\La)$ and hence
\begin{equation}\label{kontor}
dd^c\pi^*u=[\dv \sigma]+\nu,
\end{equation}
where $\nu=-c_1(\La)+a$ is a global current on $Y'$ of the form  \eqref{na}.


\begin{remark}\label{bong}
Note that $\nu$ does not depend on the choice of Hermitian norm on $\La$.
One can in fact incorporate $a$ into the norm on $\La$, i.e., choose a
qas norm $\|\cdot\|^2$ on $\La$ (with empty degeneracy locus) so that
$\pi^*u=\log\|\sigma\|^2$.
\end{remark}

\begin{lma}\label{elbas2}
Assume that $\gamma$ is a product on $W$ as in \eqref{na} and $f$ is a non-trivial holomorphic function.
Then
$(\log|f|^2) \gamma$ has locally finite mass, and
$
dd^c( (\log|f|^2) \gamma)=[\dv f]\w \gamma.
$
If $u_j$ are smooth and decrease to $\log|f|^2$, then $u_j\gamma\to (\log|f|^2) \gamma$.
\end{lma}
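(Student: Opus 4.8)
The plan is to prove the three assertions of Lemma~\ref{elbas2} by reducing everything to a local statement and then invoking the Bedford-Taylor machinery exactly as in the proof of Lemma~\ref{elbas}~(iii). First I would observe that all three claims are local, so I may assume $f$ is defined on a fixed coordinate chart $W$, and that $\log|f|^2 = \hat b - \hat\beta$ where $\hat b$ is psh, bounded above, and $\hat\beta$ is smooth (indeed $\hat\beta$ can be taken $0$ after multiplying $f$ by a unit, but keeping the decomposition is harmless). The point of splitting off a smooth piece is that the interesting behaviour is concentrated in the genuinely unbounded psh factor; since $\hat\beta$ is smooth, $\hat\beta\gamma$ is manifestly a current of order $0$ with $dd^c(\hat\beta\gamma) = dd^c\hat\beta \wedge \gamma$ by the usual Leibniz rule for smooth functions against closed currents, so it suffices to treat $\hat b$.

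Next I would address local finiteness of mass of $(\log|f|^2)\gamma$. Writing $\gamma = dd^c b_1 \wedge \cdots \wedge dd^c b_p$ with $b_j$ locally bounded qpsh, I would first pass, as in the proof of Lemma~\ref{elbas}, to psh representatives: locally there is a smooth $\phi$ with $b_j + \phi$ psh and bounded, so $\gamma$ is a $\Zk$-linear (in fact polynomial) combination of genuine Bedford-Taylor Monge-Ampère products of bounded psh functions plus lower-order mixed terms against powers of $dd^c\phi$; since $dd^c\phi$ is smooth, it is enough to bound $(\log|f|^2)\, \eta$ where $\eta$ is a positive closed Monge-Ampère current of bounded psh functions. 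For such $\eta$ and a psh function $\hat b$ bounded above, the product $\hat b\, \eta$ has locally finite mass: this is standard — one truncates $\hat b$ by $\max(\hat b, -k)$, which are bounded psh hence $\max(\hat b,-k)\eta$ is well-defined with mass controlled on a slightly smaller set by the Chern-Levine-Nirenberg inequalities uniformly in $k$, and monotone convergence gives the claim. So $(\log|f|^2)\gamma$ has locally finite mass and is a current of order $0$.

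Then I would compute $dd^c$. The identity $dd^c((\log|f|^2)\gamma) = [\dv f]\wedge\gamma$ should follow from the Poincaré-Lelong equation $dd^c\log|f|^2 = [\dv f]$ together with the Leibniz-type formula $dd^c(u T) = (dd^c u)\wedge T$ valid when $T$ is positive closed and $u$ is locally the difference of a bounded psh function and a smooth function — this is exactly the statement cited from \cite[Proposition~III.4.9]{Dem} in the proof of Lemma~\ref{elbas}~(iii), applied here with $u = \log|f|^2$ and $T = \gamma$ (again after the reduction of $\gamma$ to positive closed pieces). One must check that $[\dv f]\wedge\gamma$ is the correct current, i.e., that the Bedford-Taylor product of $[\dv f]$ with $\gamma$ agrees with $dd^c((\log|f|^2)\gamma)$; but that is precisely what the cited Demailly proposition asserts, since the left-hand side there is defined as $dd^c(u T)$. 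I expect this step to be essentially bookkeeping once the reduction to positive closed $T$ is in place.

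Finally, for the approximation statement, let $u_j$ be smooth and decrease to $\log|f|^2$ (e.g. standard regularizations of $\log(|f|^2 + 1/j)$, or just $\log(|f|^2+1/j)$ itself, which is smooth, decreasing, and converges to $\log|f|^2$). Then $u_j\gamma \to (\log|f|^2)\gamma$: since $\gamma$ has order $0$ and the $u_j$ decrease to the locally integrable limit $\log|f|^2$ while being, say, uniformly bounded above on compacts, one gets convergence by dominated/monotone convergence against the positive measure coefficients of $\gamma$ — more precisely, decompose $\gamma$ into its positive and negative parts as a vector-valued measure (legitimate since $\gamma$ is of order $0$) and apply monotone convergence to each. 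The main obstacle, and the only genuinely delicate point, is the local finiteness of mass of $(\log|f|^2)\gamma$ and the legitimacy of the reduction of the mixed qpsh product $\gamma$ to positive closed Monge-Ampère currents of bounded psh functions; once that is granted, the $dd^c$ formula and the convergence are immediate consequences of the cited Bedford-Taylor and Demailly results, just as in Lemma~\ref{elbas}.
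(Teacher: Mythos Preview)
Your approach is essentially the paper's: reduce $\gamma$ to genuine Bedford--Taylor products of locally bounded psh functions by writing each $b_j = b_j' + a_j$ with $b_j'$ psh bounded and $a_j$ smooth, then invoke Demailly's Chern--Levine--Nirenberg estimate (Theorem~III.4.5) for the mass bound and Proposition~III.4.9 for the $dd^c$ identity and the monotone convergence. The paper's proof is a two-line citation of exactly these two results after the same decomposition of the $b_j$.

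There is one slip to fix. You describe the formula $dd^c(uT) = (dd^c u)\wedge T$ as valid ``when $T$ is positive closed and $u$ is locally the difference of a bounded psh function and a smooth function,'' and then apply it with $u = \log|f|^2$. But $\log|f|^2$ is not bounded below near the zeros of $f$, so it does not satisfy the hypothesis you wrote. You have confused this with the situation in Lemma~\ref{elbas}(iii), where indeed $u$ was bounded and $T$ was an arbitrary positive closed current. Here the roles are reversed: the factors $b_j'$ in $\gamma$ are bounded psh, while $u = \log|f|^2$ is a possibly unbounded psh function. The version of \cite[Proposition~III.4.9]{Dem} that the paper is invoking is precisely the one that handles this case: if $T = dd^c v_1 \wedge \cdots \wedge dd^c v_p$ with the $v_j$ locally bounded psh and $u$ is an arbitrary psh function, then $uT$ has locally finite mass, $dd^c(uT)$ agrees with $dd^c u \wedge T$, and $u_j T \to uT$ for smooth $u_j$ decreasing to $u$. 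Once you cite that version rather than the bounded one, your argument goes through and matches the paper exactly.
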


\begin{proof}
The first statement follows from \cite[Theorem III.4.5]{Dem} since locally in $Y'$ all $b_j$ in $\gamma$ are
$b_j=b_j'+a_j$ where $b_j'$ are psh and bounded and $a_j$ are smooth.
The second statement follows from
 \cite[Proposition III.4.9]{Dem} and the third one from dominated convergence.
\end{proof}

\begin{prop}\label{kamaxel}
Let $u$ be a \qas\ function on $Y$ with degeneracy locus $Z$. If $\mu\in\QZ(Y)$, then
$u \1_{Y\setminus Z}\mu$  has order zero and locally finite mass and
\begin{equation}\label{klammerdef}
[dd^c u]\w \mu:= dd^c(u \1_{Y\setminus Z}\mu)
\end{equation}
is an element in $\QZ(Y)$.
\end{prop}

We interpret the right hand side of \eqref{klammerdef} as $0$ if $\mu$ has support in $Z$.
In particular, $[dd^c u]\w \mu=0$ for all $\mu$ if  $u\equiv -\infty$. 

\begin{proof}
By linearity we can assume that $\mu=\tau_*\gamma$,  $\tau\colon W\to Y$. If the image of $\tau$ is contained in $Z$,
then $\1_{Y\setminus Z} \mu=0$. Otherwise $\tau^* u \not\equiv -\infty$ is \qas\ on $W$ by Lemma~\ref{lemur}. Then
$$
\1_{Y\setminus Z}\mu=\tau_*(\1_{W\setminus \tau^{-1}Z} \gamma)
=\tau_*\gamma=\mu
$$
since $\tau^{-1}Z$ has
positive codimension on $W$, cf.~\eqref{eq:BTprop1}.
In view of the discussion above we may assume, after possibly taking a modification of $W$,
that $\tau^*u=\log|\sigma|^2+a$ on $W$, cf.~\eqref{lemur2}, where $a$ is locally bounded.

Since $u$ is upper semi-continuous there is a sequence $u_j$ of smooth functions that decreases to $u$.
By Lemma~\ref{elbas2}, $\tau^*u_j \gamma$ tends to the $\tau^*u\gamma$ that has locally finite mass. It follows that
$$
u\mu=\lim_j u_j \mu=\lim \tau_*( \tau^*u_j \gamma)=\tau_*(\tau^*u\gamma)
$$
is a locally finite measure. Moreover,
$dd^c( u \mu)=\tau_*( dd^c (\tau^* u \gamma))$
so it is enough to see that $dd^c (\tau^* u \gamma)$ is in $\QZ(W)$.
By \eqref{kontor} on $W$, and Lemma~\ref{elbas2},
\begin{equation}\label{baktankar}
dd^c (\tau^* u \gamma)=[\dv \sigma]\w \gamma+\nu\w\gamma.
\end{equation}
The last term in \eqref{baktankar} is of the form \eqref{na}.
Notice that $[\dv \sigma]$ is a sum of terms
$m_\ell[D_\ell]$, where $m_\ell$ are integers and $D_\ell$ are divisors in $W$.
In view of Lemma~\ref{elbas}~(ii), the first term on the right hand side of \eqref{baktankar} is therefore a finite
sum of terms
$m_\ell (i_{\ell})_*(i_\ell^* \gamma)$, where $i_\ell\colon D_\ell\to W$, and each of them is in $\QZ(W)$.
\end{proof}

Recursively we define
\begin{equation}
[dd^c u]^0\w \mu=\mu, \quad [dd^c u]^{k+1}\w \mu=[dd^c u]\w [dd^c u]^k\w \mu, \quad k=0,1,2, ....
 \end{equation}
 Let\footnote{If $u=\log|\sigma|^2$, then $M^u$ is denoted by $M^\sigma$ in, e.g., \cite{AESWY}. We hope this will not cause any confusion.}
 \begin{equation}\label{bajonett}
 \la dd^c u\ra^k\w \mu:=\1_{X\setminus Z} [dd^c u]^k \w\mu, \quad M^u_k\w \mu:= \1_Z [dd^c u]^k \w\mu.
 \end{equation}
 We have, cf.~\eqref{klammerdef},
 \begin{equation}\label{bajontva}
 [dd^c u]^{k+1}\w \mu =dd^c (u \la dd^c u\ra^{k}\w \mu).
 \end{equation}
 Let $M^u=M^u_0+M^u_1+\cdots$. Notice that if $\mu$ has support on $Z$, then $M^u_0\w \mu=\mu$.

\begin{ex}\label{fantomen}
With the notation in \eqref{baktankar} in the proof of Proposition~\ref{kamaxel} we have
\begin{equation}\label{fantomen1}
[dd^c\tau^*u]^k\w\gamma=M_k^{\tau^*u}\w\gamma+ \la dd^c \tau^*u\ra^k\w\gamma=
[\dv \sigma]\w \nu^{k-1}\w\gamma+\nu^k\w\gamma.
\end{equation}
\end{ex}

\begin{prop}\label{dragos}
If $\tau\colon Y'\to Y$ is a proper mapping and $\mu\in\QZ(Y')$, then
\begin{multline}\label{oxel}
 [dd^c u]^{k}\w \tau_*\mu= \tau_*\big([dd^c \tau^* u]^k\w \mu\big), \quad  \la dd^c u\ra^{k}\w \tau_*\mu= \tau_*\big(\la dd^c \tau^* u\ra^k \w \mu\big), \\
 \quad M_k^u\w \tau_* \mu= \tau_*\big( M_k^{\tau^*u}\w \mu\big), \quad k=0,1,2,\ldots.
\end{multline}
\end{prop}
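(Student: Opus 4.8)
To prove Proposition~\ref{dragos}, the plan is to establish the first identity in \eqref{oxel} by induction on $k$, where the induction step rests on the case $k=1$, and then to deduce the identities for $\la dd^c u\ra^k$ and $M_k^u$ from it. Write $Z=Z_u$ and $Z'=Z_{\tau^*u}$; a remark used throughout is that $Z'=\tau^{-1}(Z)$. Indeed, in a local expression $u=c\log|f|^2+b$ with $b$ bounded one has $\tau^*u=c\log|\tau^*f|^2+\tau^*b$, so $Z'$ is locally $\tau^{-1}\{f=0\}$ where $\tau^*f\not\equiv 0$, while on a component where $\tau^*f\equiv 0$ one has $\tau^*u\equiv-\infty$ (as allowed by Lemma~\ref{lemur}) and both sides are the whole component.

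First I would isolate two commutation facts. The first is that for any analytic subset $V\subseteq Y$ and any $\nu\in\QZ(Y')$,
\begin{equation*}
\1_V\,\tau_*\nu \;=\; \tau_*\!\left(\1_{\tau^{-1}V}\,\nu\right),
\end{equation*}
which is immediate from the description \eqref{eq:restrGGZ} of $\1_V$ on $\QZ$: writing $\nu=\sum (S_j)_*\gamma_j$ one has $\tau_*\nu=\sum(\tau\circ S_j)_*\gamma_j$, and $(\tau\circ S_j)(W_j)\subseteq V$ precisely when $S_j(W_j)\subseteq\tau^{-1}V$. Taking $V=Z$ and using $\tau^{-1}Z=Z'$ gives $\1_Z\tau_*\nu=\tau_*(\1_{Z'}\nu)$ and $\1_{Y\setminus Z}\tau_*\nu=\tau_*(\1_{Y'\setminus Z'}\nu)$. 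The second fact is that if $\nu\in\QZ(Y')$ satisfies $\1_{Z'}\nu=0$, then $u\cdot\tau_*\nu=\tau_*(\tau^*u\cdot\nu)$ as locally finite measures. By \eqref{eq:restrGGZ} such a $\nu$ is a locally finite sum of terms $S_*\gamma$ with $S\colon W\to Y'$ proper, $\gamma$ of the form \eqref{na}, and $S(W)\not\subseteq Z'$; setting $\rho=\tau\circ S$, so that $\rho(W)\not\subseteq Z$ and $\rho^*u=S^*\tau^*u$ is \qas\ and not $\equiv-\infty$, the argument in the proof of Proposition~\ref{kamaxel}---applied on $Y$ to $\rho_*\gamma$, and on $Y'$ to $S_*\gamma$ with the function $\tau^*u$ (the same modification of $W$ principalizing $\rho^*u$ serves both)---gives $u\cdot\rho_*\gamma=\rho_*(\rho^*u\cdot\gamma)$ and $\tau^*u\cdot S_*\gamma=S_*(\rho^*u\cdot\gamma)$; pushing the latter forward by $\tau$ and using $\tau_*\circ S_*=\rho_*$ yields the claim.

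With these facts the case $k=1$ is a formal computation: by definition $[dd^c u]\wedge\tau_*\nu=dd^c(u\cdot\1_{Y\setminus Z}\tau_*\nu)$, and putting $\nu'=\1_{Y'\setminus Z'}\nu$ the first fact gives $\1_{Y\setminus Z}\tau_*\nu=\tau_*\nu'$ while $\1_{Z'}\nu'=0$, so by the second fact $u\cdot\tau_*\nu'=\tau_*(\tau^*u\cdot\nu')$; since $dd^c$ commutes with proper pushforward this equals $\tau_*\big(dd^c(\tau^*u\cdot\1_{Y'\setminus Z'}\nu)\big)=\tau_*\big([dd^c\tau^*u]\wedge\nu\big)$. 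The first identity in \eqref{oxel} then follows by induction on $k$, applying the $k=1$ identity with $\nu=[dd^c\tau^*u]^{k-1}\wedge\mu$, and the identities for $\la dd^c u\ra^k$ and $M_k^u$ follow from it together with the first commutation fact, since $\la dd^c u\ra^k\wedge\tau_*\mu=\1_{Y\setminus Z}[dd^c u]^k\wedge\tau_*\mu$ and $M_k^u\wedge\tau_*\mu=\1_Z[dd^c u]^k\wedge\tau_*\mu$. The only genuine content is the second commutation fact, and inside it the statement---borrowed from the proof of Proposition~\ref{kamaxel}, ultimately Lemma~\ref{elbas2} after principalizing the pullback of $u$---that $u$ passes through the pushforward as a locally finite measure; everything else is bookkeeping, the one thing to watch being to keep $Z$, $Z'=\tau^{-1}Z$, and the unbounded loci of the further pullbacks straight.
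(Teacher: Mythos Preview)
Your argument is correct and follows essentially the same route as the paper: reduce by induction to $k=1$, use the commutation $\1_Z\tau_*\nu=\tau_*(\1_{\tau^{-1}Z}\nu)$, and then invoke the measure-level identity $u\cdot\tau_*\nu'=\tau_*(\tau^*u\cdot\nu')$ from the proof of Proposition~\ref{kamaxel}. The only cosmetic difference is that the paper states the last step via the smooth approximants $u_j$ (so that $dd^c(u_j\,\1_{Y\setminus Z}\tau_*\mu)=\tau_*dd^c(\tau^*u_j\,\1_{Y'\setminus\tau^{-1}Z}\mu)$ is elementary and one passes to the limit), whereas you cite the conclusion of that limiting argument directly; the content is the same.
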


\begin{proof}
The case $k=0$ is trivial, and by induction
it is enough to consider $k=1$. Then the proposition follows from the equalities $\1_Z \tau_* \mu=\tau_*(\1_{\tau^{-1}Z} \mu)$ and 
$$
dd^c( u_j \1_{Y\setminus Z} \tau_*\mu)=\tau_* \big(dd^c (\tau^*u_j \1_{Y'\setminus \tau^{-1}Z} \mu)\big),
$$
where $u_j$ is a sequence of smooth functions as in the proof of Proposition~\ref{kamaxel}.
\end{proof}

\begin{remark} \label{Qqas}
We claim that if $u$ is $\mathbb{Q}$-\qas/$\mathbb{R}$-\qas\, then  there is a modification
$\pi \colon Y' \to Y$ and a section $\sigma$ of a line bundle $\L\to Y'$ such that locally on $Y'$,
 $\pi^* u = c\log|\sigma|^2+a$, where $a$ is qpsh and $c>0$ is rational/real.  Notice that
 $c$ cannot be globally constant in general. For instance, let $u$ be a potential to
 $\sum_k (1/k)[z=k]$ in the complex plane.

To see the claim, first recall that the normalization $p\colon U'\to U$ of the blowup of $U$ along the ideal $\J$ generated by
$f$ is the same as the blow up of the ideal $\J^k$ for any $k\ge 1$.
If $f$ and $\hat f$ occur in two local representations \eqref{totem1} then
$c'\log|f'|=c\log|f|+\O(1)$.   Therefore $|f'|^{k'}\sim |f|^k$ for some integers $k,k'\ge 1$. This is
 obvious if $u$ is  $\mathbb{Q}$-\qas\ so that $c,c'\in \mathbb Q$.  In the real case this is clear if $f$ and $f'$ are functions, and one is reduced to this case in a suitable modification where $f$ and $f'$ are principal. 
 Thus $Y'$ is well-defined locally and hence globally. Now the claim follows if
$\sigma$ is a global section that defines the exceptional divisor.  
\end{remark}

\subsection{Action of Segre and Chern forms associated with a smooth  metric}
Notice that if $L\to X$ is a line bundle with a smooth Hermitian metric $e^\psi$,
then $c_1(L,e^\psi)$ defines an endomorphism $\c_1(L,e^\psi)$ on currents acting as multiplication
by the smooth form $c_1(L,e^\psi)$.
Moreover, if
$\mu=\tau_*\gamma$, then $c_1(L,e^\psi)\w \tau_*\gamma=\tau_*(c_1(\tau^*L,\tau^*e^{\psi})\w\gamma)$
and hence $\c_1(L,e^\psi)$ yields an endomorphism on $\QZ(X)$. We also have the endomorphism
$\s_1(L,e^\psi)=-\c_1(L,e^\psi)$.

Let $E\to X$ be a Hermitian holomorphic vector bundle. Since $p\colon \Pk(E)\to X$ is a submersion,
there is a well-defined morphism $p^*$ mapping currents on $X$ to currents on $\Pk(E)$ such that
$p_*\xi \w \mu=p_*(\xi\w p^*\mu)$
for smooth forms $\xi$ on $\Pk(E)$.

\begin{lma}\label{tomte1}
We have
$$
p^*\colon \QZ(X)\to \QZ(\Pk(E)).
$$
\end{lma}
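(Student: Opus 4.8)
The plan is to reduce the statement to the behavior of $p^*$ on a single building block $\mu = \tau_* \gamma \in \QZ(X)$, where $\tau \colon W \to X$ is proper, $W$ is a manifold, and $\gamma = dd^c b_1 \wedge \cdots \wedge dd^c b_p$ with $e^{b_j}$ locally bounded qpsh metrics on line bundles $L_j \to W$. Since $\QZ(X)$ consists of locally finite sums of such currents and $p^*$ is linear, it suffices to show $p^* \mu \in \QZ(\Pk(E))$. The natural candidate is to form the fiber product: let $W' = W \times_X \Pk(E)$, with projections $\tilde\tau \colon W' \to \Pk(E)$ and $q \colon W' \to W$. Because $p$ is a submersion (in fact a locally trivial $\Pk^{r-1}$-bundle) and $\tau$ is proper, $W'$ is again a complex manifold and $\tilde\tau$ is proper. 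The commutative square then gives, by the usual base-change compatibility of pushforward and the submersion pullback, $p^* \tau_* \gamma = \tilde\tau_* q^* \gamma$.

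The second step is to check that $q^* \gamma$ is again of the form \eqref{na} on the manifold $W'$. Since $q$ is a submersion (base change of $p$), $q^* b_j$ makes sense as a locally bounded qpsh function on $W'$: pulling back under a holomorphic submersion preserves qpsh-ness and local boundedness, and $q^* e^{b_j}$ is a metric on the line bundle $q^* L_j \to W'$. Moreover, pullback under a submersion commutes with $dd^c$ and with wedge products of such currents (this is the same Bedford--Taylor continuity used throughout, e.g. in the proof of Lemma~\ref{elbas}), so $q^* \gamma = dd^c(q^* b_1) \wedge \cdots \wedge dd^c(q^* b_p)$. Hence $q^* \gamma$ has exactly the shape required in the definition of $\QZ(W')$, and therefore $p^* \mu = \tilde\tau_*(q^* \gamma) \in \QZ(\Pk(E))$, as desired. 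Finally, one checks the sums remain locally finite: this follows because $p$ is proper (each fiber is compact), so $p^{-1}$ of a compact set is compact, and local finiteness of the original sum for $\mu$ transfers to the sum for $p^* \mu$.

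The step I expect to require the most care is the identity $p^* \tau_* \gamma = \tilde\tau_* q^* \gamma$ at the level of currents of order zero, rather than just smooth forms. The definition of $p^*$ in the excerpt is given only via the adjunction $p_*(\xi \wedge p^* \mu) = p_* \xi \wedge \mu$ for smooth $\xi$, so one must argue that this determines $p^* \mu$ uniquely as a current (true because $p$ is a submersion and $p_*$ is surjective onto currents in a suitable sense) and that the base-change formula holds for our specific $\gamma$. The cleanest route is to note that $\gamma$ has order zero and can be approximated: either approximate $b_j$ by decreasing sequences of smooth psh functions (plus smooth corrections) and use Bedford--Taylor continuity together with the already-known base-change identity for smooth forms, or invoke that $p$ is a fiber bundle so that locally $\Pk(E) \cong X \times \Pk^{r-1}$ and $p^*$ is literally $\mu \mapsto \mu \times 1$, for which base change against $\tau$ is immediate. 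I would spell out the local-triviality reduction, since it sidesteps subtleties about defining $p^*$ on general order-zero currents.
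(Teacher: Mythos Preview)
Your proposal is correct and follows essentially the same approach as the paper: the paper writes the fiber-product square with $\Pk(\tau^*E)$ in the upper-left corner (which is exactly your $W' = W \times_X \Pk(E)$), notes that by functoriality $p^*\mu = \tau'_*(p')^*\gamma$, and declares this to be in $\QZ(\Pk(E))$. Your write-up is considerably more detailed --- you justify why $W'$ is a manifold, why $\tilde\tau$ is proper, why $q^*\gamma$ is again of the form \eqref{na}, why the base-change identity holds at the level of currents, and why local finiteness is preserved --- whereas the paper compresses all of this into ``by functoriality''.
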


\begin{proof}
By functoriality
we have the commutative diagram
\begin{equation}\label{dia23}
\begin{array}[c]{cccc}
 \Pk(\tau^*E) &  \stackrel{\tau'}{\longrightarrow}  & \Pk(E) \\
\downarrow \scriptstyle{p'} & &  \downarrow \scriptstyle{p}  \\
W & \stackrel{\tau}{\longrightarrow}  & X.
\end{array}
\end{equation}
If  $\mu=\tau_*\gamma$, thus $p^*\mu=\tau'_* (p')^*\gamma$, which is an element in $\QZ(\Pk(E))$.
 \end{proof}

Since $s(E,h)=p_*s(L_E,e^\psi)$,
for any current $\mu$ on $X$,
\begin{equation}\label{toka}
s(E,h)\w \mu= p_* s(L_E,e^\psi)\w \mu=p_*(s(L_E,e^\psi)\w p^*\mu).
\end{equation}
It follows from the lemma that \eqref{toka} defines endomorphisms
\begin{equation}\label{kraka}
\s_k(E,h)\colon \QZ(X)\to \QZ(X), \quad k=0,1,\ldots,
\end{equation}
where
$\s_k(E,h)\w\mu:= p_*(s_1(L_E,e^\psi)^{k+r-1}\w p^*\mu)$.
In the same way we have endomorphisms
\begin{equation}\label{kraka1}
\c_k(E,h)\colon \QZ(X)\to \QZ(X), \quad k=0,1, \ldots
\end{equation}
defined by suitable compositions, cf.~\eqref{cformel}.
Since these operators are just multiplication by $s_k(E,h)$ and $c_k(E,h)$, in particular 
$\c_0(E,h)=\s_0(E,h)=I$, the identity on $\QZ(X)$.

\subsection{The submodule $\QZ_0(Y)$}

Let $0\to S\to E\to Q\to 0$ be a short exact sequence of holomorphic vector bundles over a complex manifold $W$ with the metrics induced by a smooth metric
on $E$. Then, by \cite{BC}, for each component
$\beta=(c(E)-c(S)c(Q))_k$ there is smooth form $v$ on $W$ such that
$dd^c v=\beta$.
It follows from \eqref{kraka1} that $\beta$ induces an endomorphism
$\QZ(W)\to \QZ(W)$, \ $\mu\mapsto \beta\w \mu$.

Let $L$ be a holomorphic line bundle over $W$ with two different metrics $e^b$ and $e^{b'}$,
where $b'$ is assumed to be smooth, and $b$ is qpsh and locally bounded. Then $v=b-b'$ is a
locally bounded qpsh function. 
Let $\beta:=dd^c b'-dd^c b=dd^c v$. Again we have a mapping $\QZ(W)\to \QZ(W)$, \ $\mu\mapsto \beta\w \mu$.
We say that any of these two forms $\beta$ is a {\it beta form}.

\begin{df}
We let $\QZ_0(Y)$ be the submodule of $\QZ(Y)$ generated by all currents of the form
$\tau_*\mu'$, $\tau\colon W\to Y$, where $\mu'$ is in the image of one of the mappings  above.
\end{df}

Thus  $\QZ_0(Y)$ is generated by all $\tau_*(\beta\w\gamma)$
where $\beta$ is a beta form and $\gamma$ is a product as in  \eqref{na}.
For each such form there is a locally bounded function $v$ such that
\begin{equation}\label{parasoll}
dd^c \tau_*(v\gamma)=\tau_*(\beta\w\gamma).
\end{equation}

 \begin{remark}
 Since the pullback of a beta form is a beta form, the module $\QZ_0(Y)$ is actually generated by all currents
 $\tau_*\gamma$, where $\gamma$ is as in \eqref{na} but with at least one factor $dd^c b_j$
 'replaced' by a beta form.
\end{remark}

If $\tau\colon Y'\to Y$ is a proper mapping, then $\tau_*\colon \QZ_0(Y')\to \QZ_0(Y)$.
In view of \eqref{eq:restrGGZ}, for any analytic subspace $V\subset Y$,
\begin{equation}\label{stare}
\1_V \colon \QZ_0(Y)\to \QZ_0(Y).
\end{equation}

\begin{prop} \label{elefant}
If  $\mu\in \QZ_0(Y)$ has bidegree $(k,k)$ and support on the analytic subspace $V \subset Y$,
then $\mu=0$ if $k-1<\codim V$.
\end{prop}

In particular, if $\mu\in \QZ_0(Y)$ has support at a point, then $\mu=0$.

\begin{proof}
It follows from 
Proposition~\ref{tax} and \eqref{parasoll} that there is
a current $w=i_*w'$, where $i\colon V\to X$,
such that $dd^c w=\mu$.  If $\mu$ has bidegree $(k,k)$, then $w$ has bidegree $(k-1,k-1)$.
Thus $w'$ has bidegree $(k-1-\codim V,k-1-\codim V)$ and hence it vanishes if $k-1<\codim V$.
\end{proof}

\begin{prop}\label{ekorre}
(i) If $u$ is a \qas\ function on $Y$, then
$$
[dd^c u]^k\colon \QZ_0(Y)\to \QZ_0(Y), \quad k=0,1,2,\ldots.
$$
(ii)
If $u'$ is \qas\ and $u'=u+\Ok(1)$ locally on $Y$, then
$$
[dd^c u]^k-[dd^c u']^k \colon \QZ(Y)\to \QZ_0(Y), \quad k=0,1,2,\ldots.
$$
(iii) The mapping $p^*$ in Lemma~\ref{tomte1} maps $\QZ_0(X)\to \QZ_0(\Pk(E))$. \newline
\end{prop}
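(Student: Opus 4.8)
The plan is to prove the three parts in order, reducing each to the previously established machinery for mixed Monge-Ampère products and the structure of beta forms.

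\smallskip

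For part (i), I would argue by linearity and induction on $k$, so that it suffices to treat $k=1$ and a generator $\mu = \tau_*(\beta \wedge \gamma)$, where $\beta$ is a beta form on $W$, $\gamma$ is a product of the form \eqref{na}, and $\tau \colon W \to Y$. By Proposition~\ref{dragos}, $[dd^c u] \wedge \tau_*(\beta \wedge \gamma) = \tau_*([dd^c \tau^* u] \wedge \beta \wedge \gamma)$, so I can work on $W$ and drop $\tau$. If $\tau^* u \equiv -\infty$ the term vanishes; otherwise $\tau^* u$ is \qas\ on $W$ by Lemma~\ref{lemur}. After a further modification (which, by Proposition~\ref{dragos} again, only conjugates the statement by a pushforward, and beta forms pull back to beta forms) I may assume $\tau^* u = \log|\sigma|^2 + a$ as in \eqref{lemur2}, so $dd^c \tau^* u = [\dv \sigma] + \nu$ with $\nu$ of the form occurring in \eqref{na}. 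Then $[dd^c \tau^* u] \wedge \beta \wedge \gamma = [\dv \sigma] \wedge \beta \wedge \gamma + \nu \wedge \beta \wedge \gamma$. The second term is again of the generating form for $\QZ_0$ (a beta form wedged with a product \eqref{na}). For the first term, $[\dv \sigma] = \sum m_\ell [D_\ell]$, and by Lemma~\ref{elbas}~(ii) applied to the locally bounded qpsh factors (absorbing the beta potential $v$, which is locally bounded, as an extra bounded factor — note $\beta \wedge \gamma$ locally equals $dd^c v \wedge \gamma$), each $[D_\ell] \wedge \beta \wedge \gamma$ equals $(i_\ell)_*$ of a restriction, which is still a pushforward of a beta form wedged with a product \eqref{na}, hence lies in $\QZ_0$. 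This is essentially the proof of Proposition~\ref{kamaxel} with one factor replaced by a beta form, and the main thing to check carefully is that the restriction in Lemma~\ref{elbas}~(ii) is compatible with the beta factor, i.e.\ that $i_\ell^*$ of a beta form (or its potential) is again admissible — which it is, since pullback of a smooth form is smooth and $i_\ell^* v$ is locally bounded qpsh.

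\smallskip

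For part (ii), again by linearity and induction it suffices to handle $k=1$ and $\mu = \tau_*\gamma$ with $\gamma$ as in \eqref{na}. Using Proposition~\ref{dragos} for both $u$ and $u'$, I reduce to showing $([dd^c \tau^* u] - [dd^c \tau^* u']) \wedge \gamma \in \QZ_0(W)$ on $W$. Since $u' = u + \Ok(1)$ locally, $\tau^* u' = \tau^* u + \Ok(1)$ locally on $W$ (again Lemma~\ref{lemur}, after noting the hypothesis is preserved). After a common modification principalizing both, I may write $\tau^* u = \log|\sigma|^2 + a$ and $\tau^* u' = \log|\sigma'|^2 + a'$; because the difference is locally bounded, $|\sigma| \sim |\sigma'|$, so the integral-closure ideals agree and I may take $\sigma' = \sigma$ (possibly after a further blowup, harmless by Proposition~\ref{dragos}). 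Then $[dd^c \tau^* u] - [dd^c \tau^* u'] = \nu - \nu' = (-c_1(\La) + a) - (-c_1(\La) + a') = dd^c(a - a')$, and $a - a'$ is a locally bounded function which is a difference of qpsh functions, hence locally $a - a' = (\text{bounded qpsh}) - (\text{bounded qpsh})$ modulo smooth — exactly the shape of a beta-form potential. Thus $(\nu - \nu') \wedge \gamma = dd^c((a-a') \gamma)$ is, by definition, in $\QZ_0(W)$. For the induction step, one expands $[dd^c u]^k - [dd^c u']^k$ telescopically as a sum of terms, each with exactly one factor $[dd^c u] - [dd^c u']$ and the rest factors of $[dd^c u]$ or $[dd^c u']$, then applies part (i) to absorb the remaining factors (which map $\QZ_0$ to $\QZ_0$). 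The delicate point here is the reduction $\sigma' = \sigma$ and keeping track that the telescoping is legitimate for these non-smooth products; commutativity and associativity of the Bedford-Taylor products on the modification, together with \eqref{bajontva}, make this routine.

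\smallskip

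For part (iii), recall from the proof of Lemma~\ref{tomte1} that for $\mu = \tau_*\gamma$ one has $p^*\mu = \tau'_* (p')^*\gamma$ via the commutative diagram \eqref{dia23}. So it suffices to observe that if $\gamma$ is a generator of $\QZ_0(X)$ in the form $\tau_*(\beta \wedge \gamma')$ with $\beta$ a beta form on $W$, then $(p')^*(\beta \wedge \gamma')$ is again of this shape on $\Pk(\tau^* E)$: the pullback $(p')^* \beta$ of a beta form under the submersion $p'$ is a beta form (it is either $(c(p'^*E') - c(p'^*S')c(p'^*Q'))_k$ for the pulled-back exact sequence, or $dd^c$ of the pullback of a locally bounded qpsh potential, which is again locally bounded qpsh), and $(p')^*\gamma'$ is a product of the form \eqref{na} since $p'$ is a submersion and pullback of a locally bounded qpsh metric on a line bundle under a submersion is again locally bounded qpsh. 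Hence $(p')^*(\beta \wedge \gamma') = (p')^*\beta \wedge (p')^*\gamma' \in \QZ_0(\Pk(\tau^* E))$, and pushing forward by $\tau'$ stays in $\QZ_0$. I expect the main obstacle across all three parts to be the bookkeeping in part (ii): making the identification $\sigma' = \sigma$ precise after the necessary modifications, and verifying that the telescoping identity for powers of $[dd^c u]$ versus $[dd^c u']$ holds at the level of these Bedford-Taylor currents rather than merely formally — everything else is a direct adaptation of Propositions~\ref{kamaxel} and~\ref{dragos}.
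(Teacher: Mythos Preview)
Your approach is essentially the same as the paper's for all three parts. The one point that needs tightening is in part~(ii): you write that $a-a'$ is ``exactly the shape of a beta-form potential,'' but by the paper's definition a beta-form potential (of the second type) is a single locally bounded \emph{qpsh} function, and a difference of two qpsh functions need not be qpsh. The paper handles this by introducing a smooth metric $a''$ on the same line bundle $\La$ and writing
\[
dd^c(a-a') \;=\; dd^c(a-a'') \;-\; dd^c(a'-a'') \;=\; \beta - \beta',
\]
where each of $\beta,\beta'$ is now a genuine beta form (smooth versus locally bounded qpsh). Then $(\nu-\nu')\wedge\gamma = \beta\wedge\gamma - \beta'\wedge\gamma \in \QZ_0(W)$ by definition of the module. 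This small trick is the only substantive thing your sketch is missing; once inserted, your argument matches the paper's. (Your reduction $\sigma'=\sigma$ is fine: from $u'=u+\Ok(1)$ one gets $[\dv\sigma']=[\dv\sigma]$, so after identifying the isomorphic line bundles the sections differ by a unit, which is absorbed into $a'$.)
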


\begin{proof}
Recall that if $\mu$ has support in $Z$, then
$[dd^c u]\w \mu=0$. Otherwise, assume that $\mu=\tau_* (\beta\w \gamma)$ where $\gamma$ is product as in \eqref{na}
and $\beta$ is a beta form. As in the proof of Proposition~\ref{kamaxel} we can therefore assume, cf.~\eqref{baktankar},
that
\begin{equation}\label{skunk}
[dd^c u]\w \mu =\tau_*\big([\dv\sigma]\w \beta\w\gamma+\nu\w\beta\w\gamma\big).
\end{equation}
If $i\colon D\to W$, then $i^*(\beta\w \gamma)=i^*\beta\w i^*\gamma$ and thus
$[\dv\sigma]\w \beta\w \gamma$ is in $\QZ_0(W)$.
Thus (i) holds for $k=1$ and hence for a general $k$ by induction.

By (i) and a simple induction over $k$ it is enough to prove (ii) for $k=1$. Let us assume that $\mu=\tau_*\gamma$, $\tau\colon W\to Y$.
After a suitable further modification we can assume that $\tau^*u=\log|\sigma|^2+a$ and $\tau^*u'=\log|\sigma'|^2+a'$ in $W$
where $a$ and $a'$ are locally bounded.
The assumption in (ii) implies that $[\dv\sigma']=[\dv\sigma]$.
If $a''$ is some smooth metric on the dual of the line bundle corresponding to this divisor, then
$dd^c(a-a')=dd^c(a-a'')-dd^c(a'-a'')=\beta-\beta'$ where both $\beta$ and $\beta'$ are beta forms on $W$.
Therefore,
$$
([dd^c u']-[dd^c u])\w\mu= \tau_*(dd^c \tau^*u \w \gamma) -\tau_*(dd^c \tau^*u' \w \gamma)=
\tau_*(\beta\w\gamma)-\tau_*(\beta'\w\gamma)
$$
which is in $\QZ_0(Y)$.

Part (iii) follows from the proof of Lemma~\ref{tomte1}. In fact, if $\mu$ is in $\QZ_0(X)$, then we can assume that
 $\mu=\tau_*(\beta\w\gamma)$ where $\gamma$ is of the form \eqref{na} and $\beta$ is a beta form.
Then $p^*\mu=\tau'_* (p')^*(\beta\w\gamma)$ is in $\QZ_0(\Pk(E))$.
\end{proof}

 \begin{df}
 We define the quotient module $\QB(Y):=\QZ(Y)/\QZ_0(Y)$.
 \end{df}

The definition is analogous with the definition of the quotient module $\Ba(Y)$ of $\GZ(Y)$, see \cite{AESWY}, and there is a
natural mapping
$\mathcal \Ba(Y)\to \QB(Y)$.

\subsection{Multiplicities of quasi-cycles}\label{multip}

Let $\xi$ be a tuple of holomorphic functions that defines the maximal ideal at $x\in Y$.
If $\mu\in \QZ(Y)$ and $u=\log|\xi|^2$, then
$M^u\w \mu$
is in $\QZ(Y)$ and has support at $x$, so it is $\alpha[x]$ for some integer $\alpha$, cf.~Example~\ref{pudel}.
By Proposition~\ref{ekorre}~(ii) and Proposition~\ref{elefant},  $\alpha$
is independent of the choice of tuple $\xi$ defining the maximal ideal.
If $\mu$ has bidegree $(k,k)$, for degree reasons, we have that
$M^u\w\mu=M^u_{n-k}\w \mu$.

\begin{df}
Let $\mu\in \QZ(Y)$ be a $(k,k)$-current and $x\in Y$. We define
the multiplicity of $\mu$ at $x$, $\mult_x\mu$, as the integer
\begin{equation}\label{kastvind}
\mult_x \mu [x] =\int M^u\w \mu = \int \1_{\{x\}}(dd^c\log|\xi|^2)^{n-k} \w \mu.
\end{equation}
\end{df}

If $\mu$ is a positive closed current, \eqref{kastvind} means
that $\mult_x\mu$ is the Lelong number of $\mu$ at $x$.

If $\mu\in \QZ_0(Y)$,  then $M^u\w\mu\in \QZ_0(Y)$ by Proposition~\ref{ekorre}~(i),
and hence $\mult_x\mu=0$, cf.~Proposition~\ref{elefant}. That is,

\begin{prop}\label{gorilla}
The multiplicity $\mult_x\mu$ only depends on the class of $\mu$ in $\QB(Y)$.
\end{prop}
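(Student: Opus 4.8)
The plan is to reduce Proposition~\ref{gorilla} to the already-established fact that elements of $\QZ_0(Y)$ with point support have zero mass, combined with Proposition~\ref{ekorre}~(i). Concretely, suppose $\mu,\mu'\in\QZ(Y)$ are $(k,k)$-currents representing the same class in $\QB(Y)=\QZ(Y)/\QZ_0(Y)$, so that $\mu-\mu'\in\QZ_0(Y)$. Fix $x\in Y$ and a tuple $\xi$ of holomorphic functions defining the maximal ideal at $x$, and set $u=\log|\xi|^2$. By the definition \eqref{kastvind}, $\mult_x\mu\,[x]=M^u\w\mu$ and $\mult_x\mu'\,[x]=M^u\w\mu'$, where $M^u\w(\cdot)=\1_{\{x\}}[dd^c u]^{n-k}\w(\cdot)$.

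The key step is then to observe that the map $\mu''\mapsto M^u\w\mu''=\1_{\{x\}}[dd^c u]^{n-k}\w\mu''$ preserves $\QZ_0(Y)$. This is immediate from Proposition~\ref{ekorre}~(i) — which says $[dd^c u]^{n-k}\colon\QZ_0(Y)\to\QZ_0(Y)$ — followed by \eqref{stare}, which says multiplication by $\1_{\{x\}}$ (the characteristic function of the analytic subset $\{x\}$) preserves $\QZ_0(Y)$. Applying this to $\mu''=\mu-\mu'$, we get $M^u\w(\mu-\mu')\in\QZ_0(Y)$. But this current has support at the single point $x$, so by Example~\ref{pudel} it equals $\alpha[x]$ for some integer $\alpha$, and by Remark~\ref{remQZ0Support} (the $\int_X\mu=\int_X dd^c w=0$ argument, valid since the support is a point) it must in fact vanish: $\alpha=0$. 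Hence $M^u\w\mu=M^u\w\mu'$, which gives $\mult_x\mu=\mult_x\mu'$.

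Since $x\in Y$ was arbitrary, this shows $\mult_x$ is constant on the class of $\mu$ in $\QB(Y)$, which is exactly the assertion. I do not anticipate a genuine obstacle here: the statement is essentially a formal corollary, and the one place that requires a little care is confirming that the point $\{x\}$ has positive codimension so that both Proposition~\ref{ekorre}~(i)/\eqref{stare} and the vanishing-mass argument of Remark~\ref{remQZ0Support} apply — which is trivially true as long as $Y$ has positive dimension (and if $\dim Y=0$ there is nothing to prove). One should also note that the statement implicitly uses that $\mult_x\mu$ is well-defined, i.e.\ independent of the tuple $\xi$ chosen; but that independence was already recorded in the paragraph preceding the definition, again via Proposition~\ref{ekorre}~(ii) and \eqref{tomat}, so it may simply be cited.
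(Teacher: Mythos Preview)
Your proposal is correct and follows essentially the same route as the paper: the paper also argues that for $\mu\in\QZ_0(Y)$ one has $M^u\w\mu\in\QZ_0(Y)$ by Proposition~\ref{ekorre}~(i), and then concludes $\mult_x\mu=0$ via \eqref{tomat}. Your version is slightly more explicit in separating out the role of \eqref{stare} for the restriction $\1_{\{x\}}$, but the argument is the same.
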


\begin{thm}\label{superman}
Assume that $\mu\in \QZ(Y)$.
If $\mu$ has bidegree $(k,k)$, then there is a unique decomposition
\begin{equation}\label{siu}
\mu=\mu_F+\mu_M,
\end{equation}
where $\mu_F$ is the Lelong current of an analytic cycle on $Y$ of codimension $k$, and the set where $\mult_x \mu_M$
does not vanish is contained in an analytic set of codimension $\ge k+1$.
\end{thm}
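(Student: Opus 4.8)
The plan is to mimic the proof of the analogous statement for $\GZ(Y)$ in \cite{AESWY}, reducing everything to the case of a single pushforward $\mu=\tau_*\gamma$ with $\gamma$ a mixed Monge--Amp\`ere product as in \eqref{na}, and then running an induction on the dimension of the ambient space (or on $\dim\supp\mu$). First I would address the integrality of the multiplicities. By definition \eqref{kastvind}, $\mult_x\mu\,[x]=M^u\w\mu$ for $u=\log|\xi|^2$ with $\xi$ defining the maximal ideal at $x$; by Proposition~\ref{kamaxel} (applied iteratively) this is an element of $\QZ(Y)$ supported at $x$, hence equals $\alpha[x]$ for some $\alpha\in\Zk$ by Example~\ref{pudel}. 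So the integrality is essentially already in place once one notes $M^u\w\mu\in\QZ(Y)$; the only thing to remark is well-definedness (independence of $\xi$), which is Proposition~\ref{ekorre}~(ii) together with \eqref{tomat}, already recorded in Section~\ref{multip}.

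The substantive part is the Siu-type decomposition \eqref{siu}. Here I would argue as follows. Write $\mu=\sum_j(\tau_j)_*\gamma_j$ and, using that this is a locally finite sum, reduce to finitely many terms near a given point; by Proposition~\ref{tax} and Proposition~\ref{dragos} one may push the analysis up to the manifolds $W_j$, so it suffices to produce the decomposition for $\gamma$ itself on $W$ and then push forward (the pushforward of a Lelong current of a cycle is again such, by the classical projection formula, and pushforward commutes with the multiplicity computation by Proposition~\ref{dragos}). So assume $\mu=\gamma=dd^c b_1\w\cdots\w dd^c b_p$ on a manifold $W$ of dimension $n$, of bidegree $(k,k)$. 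Consider the (countable, locally finite) set of points where $\mult_x\gamma>0$; one wants to extract from it, codimension by codimension, the cycle part. Concretely, let $\{V_i\}$ be the irreducible analytic subsets of codimension exactly $k$ along which the generic multiplicity $m_i:=\mult_{V_i}\gamma$ (the multiplicity at a generic point of $V_i$) is positive, set $\mu_F:=\sum_i m_i[V_i]$, and put $\mu_M:=\mu-\mu_F$. One must check: (a) this sum is locally finite, so $\mu_F\in\Zc(Y)$; (b) $\mu_M\in\QZ(Y)$, which is immediate since $\Zc(Y)\subset\QZ(Y)$ is a submodule; (c) the set $\{\mult_x\mu_M>0\}$ has codimension $\ge k+1$. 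For (a) and (c) the key input is semicontinuity/structure of the multiplicity function of a $\QZ$-current of bidegree $(k,k)$: I would want the analogue of Siu's theorem, namely that for fixed $c>0$ the set $\{x:\mult_x\gamma\ge c\}$ is analytic; in the Bedford--Taylor setting this follows because locally $\gamma$ is, up to a beta form, a positive closed current (write $b_j=b_j'+a_j$ with $b_j'$ psh bounded, $a_j$ smooth, expand the product), so Lelong-number semicontinuity for positive closed currents applies to the positive-closed part, while the beta-form contributions do not affect multiplicities (Proposition~\ref{gorilla}, since beta forms land in $\QZ_0$). This gives that $\{\mult_x\gamma\ge c\}$ is a locally finite union of analytic sets, whence the codimension-$k$ components are finitely many locally and $m_i$ is well-defined; subtracting them exactly kills the multiplicity along those components by Lemma~\ref{elbas}~(ii) (restriction formula), leaving $\mu_M$ with multiplicity supported in codimension $\ge k+1$. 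Uniqueness of the decomposition is then forced: if $\mu=\mu_F+\mu_M=\tilde\mu_F+\tilde\mu_M$, then $\mu_F-\tilde\mu_F$ is a cycle of pure codimension $k$ whose multiplicities agree, at generic points of each component, with those of $\mu_M-\tilde\mu_M$, which vanish there; hence $\mu_F=\tilde\mu_F$.

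The step I expect to be the main obstacle is (c): showing that after subtracting the codimension-$k$ cycle part, no codimension-$k$ multiplicity survives, i.e. that the generic multiplicity along $V_i$ is subtracted off cleanly and uniformly. This is exactly where one needs that the multiplicity of $\gamma$ at a point $x\in V_i$ generic equals $m_i$ and that $M^u\w[V_i]=\deg$-type contribution matching, which I would get from Lemma~\ref{elbas}~(ii): restricting $\gamma$ (or rather its positive-closed local model) to a generic transversal slice through $V_i$ reduces the computation to a positive closed current of bidimension $0$ on the slice, where the multiplicity is literally the mass, an integer, constant along $V_i$ by the analyticity of the super-level sets. The bookkeeping that all of this is compatible with the pushforward $\tau_*$ — in particular that $\tau_*$ of the codimension-$k$ cycle part is the codimension-$k$ cycle part of $\tau_*\gamma$, with the fibre-degree multiplicities worked out — is the other place requiring care, but it is handled by Proposition~\ref{dragos} exactly as in \cite{AESWY}, so I would simply cite that the proof there goes through verbatim with $\GZ$ replaced by $\QZ$, the three properties in Lemma~\ref{elbas} being precisely what makes the arguments work.
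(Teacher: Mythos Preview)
Your integrality argument is fine and matches the paper. The gap is in the reduction for the decomposition \eqref{siu}. You claim it suffices to decompose $\gamma$ on $W$ and then push forward, but this is false. A product $\gamma=dd^c b_1\w\cdots\w dd^c b_p$ with all $b_j$ locally bounded has $\mult_w\gamma=0$ at every $w\in W$ --- indeed, by your own beta-form observation, $\gamma$ lies in the same $\QB(W)$-class as the smooth form $\prod_j dd^c b_j^{(0)}$ for any choice of smooth reference metrics $b_j^{(0)}$ on the $L_j$ --- so the decomposition on $W$ is trivially $\gamma_F=0$. Pushing forward would give $(\tau_*\gamma)_F=0$, which is wrong: take $W=Y\times\P^1$, $\tau$ the projection, and $\gamma$ the pullback of the Fubini--Study form, so that $\tau_*\gamma=1=[Y]$ has $\mu_F=[Y]\neq 0$. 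Proposition~\ref{dragos} does not say that multiplicities are preserved under $\tau_*$: it gives $M^u\w\tau_*\gamma=\tau_*(M^{\tau^*u}\w\gamma)$, but $\tau^*u=\log|\tau^*\xi|^2$ cuts out the fibre $\tau^{-1}(x)$, not a point of $W$, so $M^{\tau^*u}\w\gamma$ is a Segre-type current along the fibre, not a multiplicity of $\gamma$.

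The paper instead factors $\tau=i\circ\tau'$ through the image $Z=\overline{\tau(W)}$, uses $\mult_x\tau'_*\gamma=\mult_{i(x)}\tau_*\gamma$, and then verifies that the argument of \cite[Lemma~7.3 and Theorem~7.1]{Asommar} carries over from smooth $\gamma$ to products \eqref{na}. The one genuinely new ingredient, beyond the three properties in Lemma~\ref{elbas}, is the implication
\[
M^{\tau^*\xi}\w 1=0\ \Longrightarrow\ M^{\tau^*\xi}\w\gamma=0,
\]
which is automatic for smooth $\gamma$ but not for \eqref{na}; the paper proves it by regularizing each $b_\ell=u_\ell+g_\ell$ via smooth psh $u_\ell^j\searrow u_\ell$ and invoking \cite[Proposition~III.4.9]{Dem} to get $M^{\tau^*\xi}\w\gamma_j\to M^{\tau^*\xi}\w\gamma$. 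Your final fallback (``the proof in \cite{AESWY} goes through verbatim, the three properties in Lemma~\ref{elbas} being what make it work'') is therefore not quite right: this fourth fact is the extra thing that must be checked, and it is the actual content of the proof here.
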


If $\mu'$ defines the same class in $\QB(Y)$, then, cf.~Proposition~\ref{gorilla}, it has the same multiplicities. It follows
from the theorem that
$\mu'_F=\mu_F$. Clearly $\mu'_M$ and $\mu_M$ may differ, but they still have the same multiplicities at
each point. We say that $\mu_F$ is the {\it fixed} part and $\mu_M$ is the {\it moving} part of the class of $\mu$ in $\QB(Y)$.

\begin{proof}
Since we know that $\mult_x\mu$ is integer valued, \eqref{siu} follows directly from the
Siu decomposition in case $\mu$ is a positive current.
As in the case with $\mu\in\GZ(Y)$, if $\mu=\tau_*\gamma$, and
$\tau=i\circ\tau'$, where $\tau'\colon W\to Z$ is surjective, then
$\mult_x \tau'_*\gamma=\mult_{i(x)}\tau_*\gamma$. Moreover,
\cite[Lemma~7.3]{Asommar} holds, with the same proof, for $\mu=\tau_*\gamma$ if
$\gamma$ is as in \eqref{na}. In fact, the crucial point is that $M^{\tau^*\xi}\w \gamma=0$
if $M^{\tau^*\xi}=M^{\tau^*\xi}\w 1=0$. To see this we can assume that $\xi$ is a tuple of functions
(i.e., a section of a vector bundle with trivial metric) in a \nbh
of $x$. Then $M^{\tau^*\xi}$ is a positive current.
If $b_\ell$ is locally of the form $u_\ell + g_\ell$, where $u_\ell$ is psh, and $g_\ell$ is smooth,
we may take a sequence $b_\ell^j = u_\ell^j + g_\ell$, where each $u_\ell^j$ is psh, and
$u_\ell^j$ decreases to $u_\ell$. Then, by \cite[Proposition III.4.9]{Dem},
$M^{\tau^*\xi}\w \gamma_j\to M^{\tau^*\xi}\w \gamma$. If $M^{\tau^*\xi}=0$, thus
$M^{\tau^*\xi}\w \gamma_j=0$ and hence $M^{\tau^*\xi}\w \gamma=0$.
Now Theorem~\ref{superman} follows precisely as
\cite[Theorem~7.1]{Asommar}.
\end{proof}

\begin{remark} \label{multQ}
It follows that if $\mu\in\QZ_\Q(Y)$, then $\mult_x\mu$ is a rational number.
\end{remark}

\begin{ex}
Assume that $\mu\in\QZ(Y)$ has bidegree $(k,k)$ and support on an analytic subspace
$i\colon V\to Y$ of pure codimension $q$. In view of Proposition~\ref{tax},  $\mu=i_*\tau$ for
a $\tau\in \QZ(V)$ of bidegree $(k-q,k-q)$.
Thus $\mu=0$ if $k<q$. If $k=q$, then $\tau$ is a locally constant function which is integer-valued since its multiplicities
are integers. It follows that $\mu$ is a cycle, i.e.,  in $\Zc (Y)$.
If $\mu$ is in $\QZ_0(Y)$, then it vanishes by Proposition~\ref{elefant}. Thus
$\Zc (Y)$ is a submodule of $\QB(Y)$.
\end{ex}

\section{Regularization of \qas\ metrics on a line bundle}

Let $L\to Y$ be a holomorphic vector bundle, let $e^\psi$ be a qas metric on $L$, 
and let $e^{\psi_0}$ be a smooth reference metric. Then, cf.~Example~\ref{plaskdamm},
the global function $\psi-\psi_0$ is \qas\ on $Y$.
In view of Proposition~\ref{kamaxel} the following definition makes sense.
\begin{df}
Consider a qas metric $e^\psi$ on $L \to X$, a smooth reference metric $e^{\psi_0}$, and  $\omega=dd^c\psi_0$.
We introduce the endomorphism
\begin{equation}\label{harry}
[dd^c\psi]_\omega\w\mu=[dd^c(\psi-\psi_0)]\w \mu+\omega\w\mu
\end{equation}
on $\QZ(Y)$.
\end{df}

Recall that if $\psi\equiv-\infty$, then $[dd^c(\psi-\psi_0)]\w\mu=0$, so that $[dd^c\psi]_\omega=\omega\w\mu$.

\begin{prop}\label{ekorre2}
(i) If $e^\psi$ is a \qas\ metric on $L\to Y$, then $[dd^c\psi]^k_\omega\colon \QZ_0(Y)\to\QZ_0(Y)$.

\smallskip
\noindent
(ii) If $e^{\psi'}$ is another \qas\ metric and $\psi'=\psi+\Ok(1)$ then
$[dd^c\psi']^k_\omega-[dd^c\psi]^k_\omega\colon \QZ(Y)\to\QZ_0(Y)$.

\smallskip
\noindent
(iii) If $\tau\colon Y'\to Y$ is a proper mapping and $\mu=\tau_*\mu'$, then
 \begin{equation}\label{harry2}
[dd^c\psi]_\omega\w\mu=\tau_*\big( [dd^c \tau^*\psi]_{\tau^*\omega}\w \mu'\big).
\end{equation}
\end{prop}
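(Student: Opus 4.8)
The plan is to reduce each of the three statements to the corresponding statements already established for the operator $[dd^c u]^k$ acting on $\QZ(Y)$ in Section~\ref{MAhom} (Propositions~\ref{kamaxel}, \ref{ekorre} and \ref{dragos}), applied to the global \qas\ function $u=\psi-\psi_0$, together with the fact that $\c_1(L,e^{\psi_0})$ (multiplication by the smooth form $\omega$) is an endomorphism of both $\QZ(Y)$ and $\QZ_0(Y)$, and commutes with pushforwards. The key observation is that by definition \eqref{harry}, $[dd^c\psi]_\omega\w\mu=[dd^c u]\w\mu+\omega\w\mu$, so at the level of a single application the statement is essentially additive in the two pieces; the work is to propagate this through the $k$-fold iteration by a binomial expansion and induction.

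For (i): I would expand $[dd^c\psi]^k_\omega$ by repeatedly applying \eqref{harry}. Since $\omega$ is smooth, $\omega\w(\cdot)$ commutes with $[dd^c u]\w(\cdot)$ as operators on $\QZ(Y)$ (both are defined via the same $\tau_*$-pushforward formalism, and for currents of order zero the Bedford--Taylor product against a smooth form is just wedging), so $[dd^c\psi]^k_\omega = \sum_{j=0}^k \binom{k}{j}\,\omega^{k-j}\w[dd^c u]^j\w(\cdot)$. Each summand maps $\QZ_0(Y)$ to $\QZ_0(Y)$: $[dd^c u]^j$ does so by Proposition~\ref{ekorre}~(i), and then wedging with the smooth form $\omega^{k-j}$ preserves $\QZ_0(Y)$ because a beta form wedged with $\omega^{k-j}$ is again (a sum of terms each containing) a beta form, cf.~the remark after the definition of $\QZ_0$. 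Hence the sum maps $\QZ_0(Y)$ to $\QZ_0(Y)$.

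For (ii): with $u=\psi-\psi_0$ and $u'=\psi'-\psi_0$ we have $u'=u+\Ok(1)$ locally, and $[dd^c\psi']^k_\omega-[dd^c\psi]^k_\omega=\sum_{j}\binom{k}{j}\omega^{k-j}\w\big([dd^c u']^j-[dd^c u]^j\big)\w(\cdot)$. By Proposition~\ref{ekorre}~(ii), each difference $[dd^c u']^j-[dd^c u]^j$ maps $\QZ(Y)$ into $\QZ_0(Y)$ (the $j=0$ term vanishes), and wedging with $\omega^{k-j}$ keeps us in $\QZ_0(Y)$ as in (i); so the whole difference lands in $\QZ_0(Y)$. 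For (iii): it suffices to treat $k=1$ and use induction, exactly as in Proposition~\ref{dragos}. For $k=1$ we combine the projection formula for the smooth piece, $\omega\w\tau_*\mu'=\tau_*(\tau^*\omega\w\mu')$, with $[dd^c u]\w\tau_*\mu'=\tau_*([dd^c\tau^*u]\w\mu')$ from Proposition~\ref{dragos}, noting $\tau^*u=\tau^*\psi-\tau^*\psi_0$ and that $\tau^*\psi_0$ is a smooth reference metric on $\tau^*L$ inducing $\tau^*\omega$; adding the two identities gives \eqref{harry2} for $k=1$, and the induction step is immediate since $[dd^c\tau^*\psi]_{\tau^*\omega}\w\mu'\in\QZ(Y')$.

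The only mild subtlety — and the step I would be most careful about — is the commutation of $\omega\w(\cdot)$ with $[dd^c u]\w(\cdot)$ and the claim that $\omega^{k-j}\w(\cdot)$ preserves $\QZ_0(Y)$; both hold because everything is phrased through pushforwards $\tau_*\gamma$ of Bedford--Taylor products, $\omega$ pulls back to a smooth form under $\tau$, and smooth forms wedge freely with (and through pushforwards of) such currents, sending products of the form \eqref{na} with a beta-form factor to sums of the same type. Once this bookkeeping is in place, (i)--(iii) follow formally from the already-established results of Section~\ref{MAhom}, with (iii) being the most routine (a one-line $k=1$ computation plus induction) and (i), (ii) requiring only the binomial expansion plus the observation about $\QZ_0$ being stable under wedging with smooth closed forms.
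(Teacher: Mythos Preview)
Your proposal is correct and follows essentially the same route as the paper: reduce each part to the corresponding statement for the global \qas\ function $u=\psi-\psi_0$ from Section~\ref{MAhom} (Propositions~\ref{ekorre} and~\ref{dragos}), using that multiplication by the smooth form $\omega$ preserves $\QZ_0(Y)$ and behaves well under pushforward.

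Two minor remarks on presentation. For (i), the paper's argument is slightly more economical: it verifies the case $k=1$ directly from \eqref{harry} and then inducts on $k$ by composing $[dd^c\psi]_\omega$ with $[dd^c\psi]^{k-1}_\omega$. This avoids the binomial expansion and hence the need to check that $\omega\w(\cdot)$ commutes with $[dd^c u]\w(\cdot)$ (which you correctly flag as the only subtle point; it does hold, since $\omega$ is smooth and closed). Your binomial expansion is perfectly valid but does a bit more work than necessary. For (iii), note that the statement as written is only for a single application of $[dd^c\psi]_\omega$, so no induction is needed; your $k=1$ computation (projection formula for $\omega$ plus Proposition~\ref{dragos} for $[dd^c u]$) is already the full proof.
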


\begin{proof}
The case $k=1$ of (i) follows directly from \eqref{harry}, Proposition~\ref{ekorre}~(i),
and the fact that multiplication by $\omega$ maps $\QZ_0(Y)$ into itself.
The general
case follows by trivial induction.
Part (ii) follows direct from Proposition~\ref{ekorre}~(ii) applied to $u'=\psi'-\psi_0$
and $u=\psi-\psi_0$.
Part (iii) follows from Proposition~\ref{dragos}. 
\end{proof}

It is sometimes useful to express $[dd^c\psi]_\omega$ 
in terms of $[dd^c\psi]$ which is well-defined even when  $\psi$  is a metric. In fact, \eqref{klammerdef} is local and not affected if
$u$ is replaced by $u+h$, where $h$ is pluriharmonic.
Let $Z$ be the degeneracy locus of $e^\psi$.
If $\mu=\1_{Y\setminus Z}\mu$, then \eqref{harry} is
$
[dd^c(\psi-\psi_0)]\w\mu+\omega\w\mu=[dd^c\psi] \w \mu,
$
and if $\mu=\1_Z\mu$, then it is just $\omega\w\mu$.
Therefore, 
\begin{equation}\label{harry1}
[dd^c\psi]_\omega\w\mu= [dd^c\psi]\w\1_{Y\setminus Z}\mu+\omega\w\1_Z \mu;
\end{equation}
this is precisely the definition of $[dd^c\psi]_\omega$ in \cite{LRSW}.
Besides $[dd^c\psi]^k$ we
have endomorphisms
$$
 \la dd^c \psi\ra^k\w \mu=\1_{Y\setminus Z} [dd^c\psi]^k\w\mu,   \quad  M^\psi_k\w\mu =\1_{Z} [dd^c\psi]^k\w\mu;
 $$
on $\QZ(Y)$.
It is readily verified by induction that
\begin{equation}\label{kusin2}
[dd^c \psi]^k_{\omega}\w\mu= \la dd^c \psi\ra^k\w \mu+\sum_{\ell=0}^k \omega^{k-\ell}\w M^{\psi}_\ell\w\mu, \quad k=0,1,2,\ldots.
 \end{equation}

\begin{thm}\label{kusin}
Let $\chi_j$ be a sequence of smooth bounded from below convex increasing functions on $\R$, which decrease
to the identity function as $j \to \infty$.
Let $e^\psi$ be a \qas\ metric on $L\to Y$, $\psi_0$ a smooth metric, and let
$\psi_j=\chi_j\circ(\psi-\psi_0) + \psi_0$.
If $\omega=dd^c\psi_0$, then for each $\mu\in\QZ(Y)$,
\begin{equation}\label{troll3}
(dd^c \psi_j )^k\w \mu \to [dd^c \psi]^k_{\omega}\w\mu, \quad j\to \infty,  \quad k=0,1,2,\ldots.
\end{equation}
\end{thm}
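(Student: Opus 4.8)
The plan is to reduce to the case $\mu = \tau_* \gamma$ with $\gamma$ a Bedford--Taylor product as in \eqref{na}, using linearity of both sides and the compatibility of all the operators with pushforward (Proposition~\ref{dragos} and Proposition~\ref{ekorre2}~(iii)). Since $\psi_j = \chi_j \circ (\psi - \psi_0) + \psi_0$ and $\tau^*\psi_j = \chi_j \circ (\tau^*\psi - \tau^*\psi_0) + \tau^*\psi_0$, pulling back reduces \eqref{troll3} to the corresponding statement on $W$ for the product $\gamma$, with $\psi$ replaced by $\tau^*\psi$. So after a modification we may assume, cf.~\eqref{lemur2}, that $\psi - \psi_0 = \log|\sigma|^2 + a$ on $W$, where $\sigma$ is a section of a line bundle $\La$, the divisor $\dv\sigma$ has normal crossings support, and $a$ is locally bounded qpsh; equivalently, by Remark~\ref{bong}, we may absorb $a$ into the metric and simply write $\psi - \psi_0 = \log\|\sigma\|^2$ for a (possibly singular, but with empty unbounded locus) metric $\|\cdot\|^2$ on $\La$. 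Thus the whole matter reduces to a local statement about regularizing $\log\|\sigma\|^2$.

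Next I would expand $(dd^c\psi_j)^k \wedge \gamma$ using $dd^c\psi_j = dd^c(\chi_j \circ (\psi-\psi_0)) + \omega$ and the binomial-type expansion, so that it suffices to show $(dd^c(\chi_j\circ v))^\ell \wedge \omega^{k-\ell}\wedge\gamma$ converges for $v = \psi - \psi_0 = \log\|\sigma\|^2$. Write $v = v' + g$ locally with $v'$ psh and bounded above, $g$ smooth. The key point is the classical behaviour of $\chi_j\circ v$: the functions $\chi_j$ are convex increasing and decrease to the identity, so $\chi_j\circ v'$ is psh, decreases to $v'$ on $\{v' > -\infty\}$ and stays bounded below locally (near points of $Z$ it is bounded below by $\chi_j(-\infty) = \inf\chi_j > -\infty$, by convexity). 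Hence $\chi_j\circ v$ is a decreasing sequence of bounded psh-plus-smooth functions, and on $Y\setminus Z$ it decreases to $v$ itself while on all of $Y$ the Monge--Ampère masses are controlled. By the Bedford--Taylor convergence theorem for decreasing sequences of locally bounded psh functions (applied after the shift by $B(|x|^2)$ as in the Corollary to Proposition~\ref{prop:locBddQasApproximation}, to handle the qpsh $g$-part), the products $(dd^c(\chi_j\circ v))^\ell\wedge\omega^{k-\ell}\wedge\gamma$ converge; it remains to identify the limit with the right-hand side of \eqref{kusin2}.

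The identification of the limit is the main obstacle and must be done carefully on $Z$. On $Y\setminus Z$ everything is immediate since $\chi_j\circ v \to v$ there as bounded psh functions, giving $\langle dd^c\psi\rangle^k\wedge\mu$. The delicate contribution is the mass that concentrates on $Z$: I would use the Poincar\'e--Lelong type identity $dd^c\log\|\sigma\|^2 = [\dv\sigma] + \nu$, cf.~\eqref{kontor}, together with Lemma~\ref{elbas2} and Example~\ref{fantomen}, to show that $\lim_j (dd^c(\chi_j\circ v))^\ell$ recovers exactly $[\dv\sigma]\wedge\nu^{\ell-1} + \nu^\ell$, i.e.\ $M^\psi_\ell + \langle dd^c\psi\rangle^\ell$ in the notation of \eqref{bajonett}. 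Concretely, one chooses smooth $w_j$ decreasing to $\log\|\sigma\|^2$ and notes $\chi_j\circ w_j$ still decreases to $\log\|\sigma\|^2$; comparing $(dd^c(\chi_j\circ v))\wedge T$ with $(dd^c v)\wedge T$ via $dd^c((\chi_j\circ v - v)T)$ for positive closed $T = \omega^{k-\ell}\wedge\gamma'$ and using that $\chi_j\circ v - v \to 0$ boundedly and in $L^1_{loc}$, the error terms vanish in the limit. Summing over $\ell$ and matching with \eqref{kusin2} gives \eqref{troll3}. I expect the only real subtlety to be justifying that no extra mass appears on $Z$ beyond $M^\psi_k$ — this is exactly where the convexity of the $\chi_j$ (forcing $\chi_j\circ v'$ to be psh, not merely qpsh) is used, and where one invokes that Bedford--Taylor products do not charge pluripolar sets, cf.~Lemma~\ref{elbas}~(i).
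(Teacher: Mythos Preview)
Your reduction steps are sound and match the paper: linearity, pushforward via Proposition~\ref{dragos}/\ref{ekorre2}, principalization so that $\tau^*\psi=\log|\sigma|^2+a$, and the binomial expansion in $\omega$ (the paper does these in reverse order, first treating $\psi_0=0$ and expanding at the end). The gap is in the core local step, showing
\[
(dd^c(\chi_j\circ v))^\ell\wedge\gamma \;\longrightarrow\; [dd^c v]^\ell\wedge\gamma
\]
for $v=\tau^*\psi$. Bedford--Taylor convergence for decreasing sequences requires the \emph{limit} to be locally bounded, which $v$ is not on $\{\sigma=0\}$; so it gives convergence only on the complement. Your claim that $\chi_j\circ v - v\to 0$ ``boundedly'' is false near $\{\sigma=0\}$, where $v=-\infty$ while $\chi_j\circ v$ stays finite. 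The comparison $dd^c((\chi_j\circ v - v)T)$ treats a single factor, not an $\ell$-fold product, and you give no inductive mechanism (both $\chi_j\circ v$ and the inner product are moving with $j$). Finally, Lemma~\ref{elbas}~(i) says only that for \emph{fixed} $j$ the product puts no mass on $\{\sigma=0\}$; it cannot prevent mass from concentrating there in the limit, and indeed it must, since the answer contains $M^\psi_\ell=[\dv\sigma]\wedge\nu^{\ell-1}$.

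The device you are missing is B\l ocki's trick, \cite{Bl}, which the paper uses. Define $\gamma_j$ by $\gamma_j'=(\chi_j')^k$ and $\gamma_j(-1)=\chi_j(-1)$; then $\gamma_j$ is again convex increasing and decreases to the identity. A direct computation (using $(du\wedge d^cu)^2=0$) gives the exact identity
\[
(dd^c(\chi_j\circ\tau^*\psi))^k\wedge\gamma \;=\; dd^c\bigl((\gamma_j\circ\tau^*\psi)\,\nu^{k-1}\wedge\gamma\bigr),
\]
where $\nu=dd^ca-c_1(\La)$. The right-hand side involves a \emph{fixed} current $\nu^{k-1}\wedge\gamma$; since $\gamma_j\circ\tau^*\psi\to\tau^*\psi$ in $L^1$ against it, the limit is $dd^c(\tau^*\psi\,\nu^{k-1}\wedge\gamma)=([\dv\sigma]+\nu)\wedge\nu^{k-1}\wedge\gamma=[dd^c\tau^*\psi]^k\wedge\gamma$ by Lemma~\ref{elbas2} and \eqref{fantomen1}. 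The point is that the $k$-fold Monge--Amp\`ere product is rewritten as a single $dd^c$ of a scalar function against a fixed current, so only $L^1$ convergence is needed and the unbounded-limit obstacle evaporates.
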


Since $\psi-\psi_0$ is a global function $\psi_j$ are well-defined.
The smoothness assumption on $\chi_j$ is not necessary, but for simplicity we restrict to this case, since we
will only apply the theorem to $\chi_\e(t)=\log(e^t+\e)$.

This theorem is known when $\mu$ is $1$ and with $[dd^c\psi]^k_\omega$ expressed as in  \eqref{kusin2}.
In \cite{AWW} it is proved for a more general class of (q)psh functions. Here we will rely on the presentation in \cite{Bl} and
show that the proof given there works even when acting on a quasi-cycle $\mu$.

\begin{proof}
First assume that $\psi\equiv -\infty$. Then $\psi_j=\chi_j(-\infty)+\psi_0$ so that
$(dd^c \psi_j)^k\w\mu=\omega^k\w\mu=[dd^c\psi]^k_\omega\w\mu$. Let us thus assume that
$\psi\not\equiv -\infty$.
Since the statement is local, we can restrict to an open subset where we have a global frame element, and hence
that $\psi$ and $\psi_0$ are fixed functions. For simplicity we let $Y$ denote this open set.

We begin with the case when $\psi_0=0$.  %
If $\mu$ has support
on $Z$, then both sides of \eqref{troll3} are equal to $\omega^k \w \mu$.
Thus we can assume that $\mu=\tau^*\gamma$, where $\tau\colon W\to Y$, $\gamma$ is a product as in \eqref{na}
and $\tau^{-1} Z$ has positive codimension in $W$.
After possibly another modification we may assume that
$\tau^*\psi= \log|\sigma|^2 + a$,
where $\sigma$ is a section of a Hermitian line bundle $\L\to W$ and $a$ is qpsh and locally bounded.
We claim that %
\begin{equation}\label{viking}
(dd^c \chi_j\circ \tau^*\psi)^k \w \gamma\to [dd^c\tau^*\psi]^k\w \gamma.
\end{equation}
Following \cite{Bl} there are unique convex increasing functions $\gamma_j$ such that $\gamma_j(-1)=\chi_j(-1)$ and
 $
\gamma'_j= (\chi'_j)^k.
$
It follows that also $\gamma_j(t)$ decreases to $t$. A direct computation yields
$$
(dd^c (\chi_j\circ\tau^*\psi))^k\w \gamma=dd^c( (\gamma_j\circ\tau^*\psi) \nu^{k-1}\w\gamma),
$$
where $\nu = dd^c a - c_1(\L)$.
Since $\gamma_j\circ\tau^*\psi \to \tau^*\psi$ in $L^1$ with respect to the (total variation of the) measure $\nu^{k-1}\w\gamma$, we see that
by definition,
$$
(dd^c \chi_j\circ\tau^*\psi)^k\w \gamma\to dd^c (\tau^*\psi \nu^{k-1}\w\gamma)=([\dv\sigma]+\nu)\w \nu^{k-1}\w\gamma =
[dd^c \tau^*\psi]^k\w\gamma;
$$
Here the last equality follows from Example~\ref{fantomen} and $\nu=dd^c a-c_1(\L)$.
Thus \eqref{viking} holds. Applying $\tau_*$ and using
Proposition~\ref{dragos}, now \eqref{troll3} follows for $\psi_0=0$.
Let us now assume that $\psi_0$ is an arbitrary smooth function.
Using the case $\psi_0=0$, 
\begin{multline}\label{trull}
(dd^c \psi_j)^k\w\mu=\sum_{\ell=0}^k {k \choose\ell} \omega^{k-\ell} \w(dd^c \chi_j\circ(\psi-\psi_0))^\ell\w\mu\to \\
\sum_{\ell=0}^k {k\choose \ell} \omega^{k-\ell} \w [dd^c(\psi-\psi_0)]^\ell \w\mu = ([dd^c(\psi-\psi_0)]+\omega)^{k}\w\mu,
\end{multline}
where the last power is composition of the endomorphism $\mu\mapsto ([dd^c(\psi-\psi_0)]+\omega)\w\mu$ on $\QZ(Y)$.
Now \eqref{troll3} follows from \eqref{harry} and \eqref{trull}.
\end{proof}

\section{Segre forms and endomorphisms of \qas\ bundles}\label{segreendo}

Assume that $(E,h)$ is a \qas\ bundle over $X$. If $e^\psi$ denotes the induced
metric on $L_E$, then $\psi=\log |\alpha|^2_h$ is \qas. Thus for any local frame
$a$ of $L_E$  the function $\log|\alpha a|^2_h$ is \qas\  on $\P(E)$.
On the set where $\alpha_j \neq 0$ we may take $a=1/\alpha_j$.

Let $h_0$ be a smooth reference metric on $E$, let $e^{\psi_0}$ denote
the induced metric on $L_E$, and $\omega = dd^c \psi_0$.
We thus have that
$$
    s(L,e^{\psi_0}) = \sum_{\ell=0}^\infty\omega^\ell=\frac{1}{1-\omega}.
$$
If $Z'\subset\Pk(E)$ denotes the set where $\psi$ is not locally bounded,
then outside of $Z'$ we have that $s_1(L_E,e^\psi)=dd^c\psi$.
Recall, cf.~\eqref{klammerdef} and \eqref{harry1}, that
\begin{equation}\label{smart1}
[dd^c \psi]_\omega\w \mu= dd^c (\psi \1_{X\setminus Z'}\mu)+\omega\w \1_{Z'} \mu
\end{equation}
defines an endomorphism on $\QZ(\Pk(E))$.
%
In view of Lemma~\ref{tomte1} and Proposition~\ref{ekorre2}~(i)
the following definition makes sense.

\begin{df}\label{kontur}
Given a \qas\ bundle $(E,h)$ over $X$, and a smooth metric $h_0$, we have the Segre endomorphisms
 $\QZ(X)\to \QZ(X)$ and $\QZ_0(X)\to \QZ_0(X)$ of bidegree $(k,k)$,
\begin{equation}\label{torrfoder}
\s_k(E,h,h_0)\w \mu=
p_*( [dd^c\psi]_\omega^{k+r-1}\w p^*\mu).
\end{equation}
We let
$$
\s(E,h,h_0) := \sum_k \s_k(E,h,h_0).
$$
 \end{df}



We define the Segre form
$s(E,h,h_0):=\s(E,h,h_0) 1$ on $X$, i.e.,
\begin{equation}\label{segre0}
s(E,h,h_0):=p_* \sum_{k=0}^\infty  [dd^c\psi]^k_\omega.
\end{equation}
Note that if $L$ is a line bundle, then $p_L \colon \P(L) \to X$ is the identity.
Hence,
\begin{equation}\label{segreL}
s(E,h,h_0):= p_* s(L_E,e^\psi,e^{\psi_0}).
\end{equation}

\begin{remark} \label{remLRSWcomparison}
The definition of $s(E,h,h_0)$ here coincides with the definition in
\cite{LRSW}.  However, in that article
is used the convention that $\Pk(E)_x$ denotes the hyperplanes in $E_x$ so one has to make
some necessary adaptions. Moreover, the definition is made on the associated (Griffiths positive) dual bundle
$(E^*,h^*)$, see, e.g., \cite[Lemma~3.1]{LRRS}. As in the smooth case one have the relations
$$
   s_k(E,h,h_0) = (-1)^k s_k(E^*,h^*,h_0^*),
$$
which connect the definitions in this article and in  \cite{LRSW}.
\end{remark}

By \eqref{kusin2}, with $Y=\Pk(E)$ and $Z=Z'$, we have
\begin{multline}\label{rakning}
\sum_{k=0}^\infty	 [dd^c \psi]^k_\omega \w\mu
	 =\sum_{k=0}^\infty \la dd^c \psi\ra^k	\w \mu
+\sum_{\ell=0}^\infty \sum_{k=\ell}^\infty M^\psi_\ell\w (dd^c\psi_0)^{k-\ell}\w\mu=\\
\left(\sum_{k=0}^\infty \la dd^c \psi\ra^k + s(L_E,e^{\psi_0})\w M^\psi\right)\w\mu.
\end{multline}
Thus
\begin{equation}\label{segre}
\s(E,h,h_0)\w \mu=p_*\big(\sum_{\ell=0}^\infty  \la dd^c\psi\ra^\ell\w p^*\mu + s(L_E,e^{\psi_0})\w M^\psi \w p^*\mu\big).
\end{equation}
Since the support of $M^\psi \w p^*\mu$ is contained in $p^{-1}Z$,
\begin{equation}\label{storlom}
 \1_{X\setminus Z} \s(E,h,h_0)\w \mu=p_*\big(\1_{\Pk(E)\setminus p^{-1}Z}\sum_{\ell=0}^\infty \la dd^c\psi\ra^\ell\w p^*\mu\big).
\end{equation}
It is independent of choice of reference metric $h_0$ and we denote it by $\s'(E,h)\w\mu$.
%
%
%
Letting
$$
M^{E,h,h_0} \w \mu:=\1_{Z} \s_k(E,h,h_0) \w \mu
$$
we have the decomposition 
\begin{equation}\label{skatbo}
\s(E,h,h_0)\w\mu=    \s_k'(E,h)\w \mu + M^{E,h,h_0} \w \mu.
\end{equation}
Note that by \eqref{eq:restrGGZ}, both terms on the right-hand side belong to $\QZ(X)$.

\begin{ex}\label{favvis0}
Let $(E,h_0)$ and $(F,h_F)$ be Hermitian vector bundles over $X$,
and let $g\colon E\to F$ be a holomorphic morphism.
Let $h$ be the singular metric on $E$ defined by
$|\alpha|_h := |g\alpha|_{h_F}$.
Recall the representation $\psi=\log|\alpha|_h^2/|\alpha_j|^2$ of the induced metric
on $L_E$, \eqref{phiform}.  In \cite{Asommar}, the corresponding representation
is denoted by $\log|\alpha|^2_\circ$. It follows from \eqref{segre},
with $\mu=1$, that the definition given in \cite[Example~9.2]{Asommar} of $s(E)$ coincides with
\eqref{segre0}.
\end{ex}

\subsection{The case when $h$ is  qas and non-degenerate}

We need the following technical result.

\begin{prop} \label{tunika}
Let $\psi(x,y)$ be a  bounded qpsh function on $U \subset \C^n_x \times \C^m_y$,
such that $\psi$ is smooth in $y$ for each fixed $x$,
let $v(x,y)$ be a smooth psh function,
and let $\rho_\e(x)=\rho(x/\e)/\e^n$ be a family of radial mollifiers with respect to the variable $x$.
Define
\begin{equation} \label{eq:locBddQasApproximation}
    \psi_\e = \log(e^\psi*_x\rho_\e+A\e e^v),
\end{equation}
where $\psi *_x\rho_\e$ denotes the convolution with respect to $x$.
Then for any $V \subset\subset U$, $\psi_\e$ is smooth on $V$ for small $\e>0$,
and if $A \gg 1$, then $\psi_\e$ decreases to $\psi$ when $\e\to 0$.
Moreover, 
\begin{equation} \label{tomte}
    (dd^c \psi_\e)^k\w \mu \to (dd^c \psi)^k\w \mu, \quad \mu\in \QZ(U).
\end{equation}
\end{prop}

Since $\psi$ is locally bounded we write $(dd^c \psi)^k\w \mu$ rather than $[dd^c \psi]^k\w \mu$.

\begin{proof}
We can choose a constant $B>0$ such that
$
\psi+g
$
is psh on $V$ if $g= B(|x|^2+|y|^2)$.
For $\e>0$,
$$
\psi_\e=\log\big((e^\psi *_x \rho_\e) e^g+e^{v+\log(\e A)+g}\big) -g.
$$
We claim that the first term $\psi'$ on the right is psh. To see this first assume that $\psi$ is smooth. 
If we approximate the convolution with a Riemann sum we get
 \begin{equation}\label{tiktok}
 \log\big(\sum_k e^{\psi(\cdot-x_k,\cdot)+c_k +g}+e^{v+\log(\e A)+g}\big).
\end{equation}
Since $dd^c g$ is translation invariant each $\psi(\cdot-x_k,\cdot)+g$ is psh.  Also $v+g$ is psh so the function in \eqref{tiktok} is psh. Since such functions tend uniformly to $\psi'$ we conclude that
$\psi'$ is psh. If $\psi$ is not smooth we approximate $u=\psi+g$ by a decreasing sequence $u_j$ of smooth psh functions and let $\psi_j=u_j-g$. By the same argument,
each $\psi'_j$ is psh, and since $\psi_j$ decreases to $\psi$, $\psi'$ is psh. Thus the claim is proved.

Note that $\psi_\e$ is smooth as a function of $x$ and $y$.
Clearly $\psi_\e\to \psi$ when $\e\to 0$. We now prove that $\psi_\e$ decreases with $\e$ on $V$ if $A$ is large enough.
To see this, we may fix $y$ and consider everything as a function of $x$, as long as the choice of $A$ is
uniform in $y$.
We write $\psi=u+g$, where $u$ is psh and
$g$ is smooth on $X$.  Then
$$
e^{\psi_\epsilon}(x) = e^{g(x)}  (e^u \ast \rho_\epsilon)(x) +
\int (e^{g(x-x')}-e^{g(x)}) \rho_\epsilon(x') dV(x') + A\epsilon e^v.
$$
Since $e^u$ is psh, the first term of the right hand side decreases when $\epsilon$ decreases.
To see that the whole expression is decreasing when $\epsilon$ decreases it
thus suffices to prove that the middle term is differentiable in $\epsilon$,
with a derivative less than $A e^v$.
In fact, $(d/d\epsilon) \rho_\epsilon(x') = \Ok(1/\epsilon^{n+1})$,
and has support where $|x'| \leq \epsilon$.
Since $|e^{g(x-x')}-e^{g(x)}| \leq C\epsilon$ for $|x'| \leq \epsilon$
(with $C$ depending on the $C^1$-norm of $e^g$),
the derivative of the middle term is bounded by a constant that is smaller than
$A e^v$ on $V$ if $A$ is chosen large enough. This choice can also be made uniformly in $y$.

It remains to prove \eqref{tomte}.
As before we have that $\psi_\e = \psi_\e + g   - g=u_\e +g$,
where $u_\e$ are psh and decrease to $u$ when $\e\to 0$, and $g$ is smooth.
Applying the Bedford-Taylor theory of convergence of Monge-Ampère products
for bounded psh functions, one obtains \eqref{tomte} in case $\mu$ has the form \eqref{na}.
If $\mu=\tau_*\gamma$, where $\tau\colon W\to U$ is proper, then
$\tau^*\psi_\e+\tau^* g$ is psh and smooth on $\tau^{-1}V$ and decreases to $\tau^*\psi+\tau^*g$ when
$\e\to 0$. We conclude that
$(dd^c \tau^*\psi_\e)^k\w \gamma \to (dd^c \tau^*\psi)^k\w \gamma$ and taking $\tau_*$ we get
 \eqref{tomte}.
\end{proof}



We will now see that, as long as \qas\ bundles have empty degeneracy loci, their Segre forms
in many ways behave as in the smooth case.

\begin{thm}\label{mainLocBdd}
Assume that $E\to X$ is a holomorphic vector bundle equipped with a non-degenerate \qas\ metric
$h$.

\noindent(i)
The Segre endomorphism on $\QZ(X)$ induced by $h$ is independent of the choice of
reference metric, and is denoted by $\s(E,h)$. On the (possibly empty) open subset
where $h$ is smooth, $\s(E,h)$ coincides with multiplication by the Segre form of $h$ there.

\noindent(ii)
Locally there exist smooth metrics $h_\epsilon$,
decreasing to $h$ when $\epsilon \to 0$, such that
\begin{equation}\label{hoppla}
s(E,h_\e) \w \mu \to \s(E,h) \w \mu, \quad \e\to 0, \quad \mu \in \QZ(X).
\end{equation}

\noindent (iii)
If $(E',h')$ is another \qas\ bundle and $h'$ is non-degenerate as well,  then
\begin{equation}\label{tonfisk}
\s(E'\oplus E, h'\oplus h)\w\mu=\s(E,h)\w \s(E',h')\w\mu, \quad \mu\in\QZ(X).
\end{equation}

\noindent (iv)  For any $\ell$ and $k$ the endomorphisms $\s_k(E,h)$ and $\s_\ell(E',h')$ commute. 
\end{thm}

\begin{proof}
The hypothesis means that $Z=\emptyset$ and thus $s(E,h,h_0)=s'(E,h)$. Therefore it is independent of $h_0$,
cf.~\eqref{storlom}. %
It follows by \eqref{segdef}, \eqref{toka} and \eqref{storlom} that if $h$ is smooth on an open set,
then $\s(E,h)$ coincides with multiplication by the Segre form of $h$ there.

We now prove (ii).
Fix a trivialization of $E$, and a local coordinate system $x$ on $X$ such that locally,
$E \cong \C^r_\alpha \times \C^n_x$. Let $\rho_\e$ be a family of radial mollifiers,
and let
\begin{equation} \label{hEpsLocBddDef}
   h_\e = h * \rho_\e + A \e I,
\end{equation}
where $I$ is the identity matrix and $A$ is a large constant.
Note that $h_\e$ depends on both the trivialization of $E$ and the local coordinate system on $X$.
It is readily verified that a convolution of positive definite matrices is positive definite,
and the entries are smooth, so that each $h_\e$ indeed defines a smooth metric on $E$.

Fix a point $x_0\in X$ and a small \nbh $V\subset X$.
Consider the subset of $p^{-1} V\subset\Pk(E)$ where $\alpha_j\neq 0$.
With respect to the frame $\alpha/\alpha_j$ of $L_E$, the
metric $e^{\psi_\e}$ induced by $h_\e$ is represented by the function
$\psi_\e = \log |\alpha/\alpha_j|_{h_\e(x)}$.
Then
$\psi_\e$ is of the form \eqref{eq:locBddQasApproximation} with
$\psi = \log |\alpha/\alpha_j|^2_{h(x)}$,
and $v = \log(|\alpha/\alpha_j|^2)$.
By Proposition~\ref{tunika}, if $A$ is chosen large enough,
$\psi_\e$ is decreasing when $\e\to 0$.
By compactness the same $A$ works where $\alpha_j\neq 0$ for any $j$.  We conclude that
$h_\e$ decreases to $h$.
It follows from Proposition~\ref{tunika} that
\begin{equation}\label{hoppla2}
    s(L_E,e^{\psi_\e}) \w p^* \mu = \sum_k (dd^c \psi_\e)^{k+r-1} \w p^* \mu
    \to \sum_k (dd^c \psi)^{k+r-1} \w p^* \mu.
\end{equation}
Applying $p_*$ we get \eqref{hoppla}, keeping in mind, cf.~\eqref{toka},  that
$\s(E,h_\e) \w \mu=s(E,h_\e)\w\mu$ since $h_\e$ are smooth.  Thus (ii) is proved.

We now consider (iii).
Let $\psi'=\log |\alpha'|^2_{h'(x)}$.
We can assume that $g$ is such that also
$\psi'+g$ is psh. Let  $h'_{\e'}$ be a similar regularization.
It then follows that $\log(e^{\psi'_{\e'}}+e^{\psi_\e})+g$ is psh  for each $\e'$ and $\e$ and decreases to
$\log(|\alpha'|^2_{h'(x)}+|\alpha|^2_{h(x)}) +g$ when $(\e',\e)$ decreases to $(0,0)$.
With the same argument as in the proof of (ii) we conclude that
\begin{equation}\label{kantarell}
s(E'\oplus E, h'_{\e'}\oplus h_\e)\w \mu\to \s(E'\oplus E, h'\oplus h)\w\mu
\end{equation}
unconditionally when $\e',\e\to 0$.
With the product formula for smooth metrics
$$
s(E'\oplus E, h'_{\e'}\oplus h_\e)\w\mu=s(E,h_\e)\w s(E',h'_{\e'})\w\mu.
$$
If we first let $\e'\to 0$ for fixed $\e$ and then let $\e\to 0$ we get \eqref{tonfisk}
from \eqref{kantarell} and \eqref{hoppla}.

Part (iv) requires an additional argument; it is indeed an immediate consequence of Theorem~\ref{main11}~(vii) below.
\end{proof}

Let $E\oplus \cdots \oplus E$ be the direct sum of $k$ copies of $E$.
By repeated use of (iii) we have  $\s(E\oplus\cdots\oplus E, h\oplus\cdots \oplus h)=\s(E,h)^k$ and clearly
 $\s(E\oplus\cdots\oplus E, h_\e\oplus\cdots \oplus h_\e)=\s(E,h_\e)^k$. Applying (ii) to the direct sum, noting that
 $(h\oplus\cdots \oplus h)_\e=h_\e\oplus\cdots\oplus h_\e$,
 we conclude that
 \begin{equation}\label{sotare}
 \s(E,h_\e)^k\to \s(E,h)^k.
 \end{equation}

\smallskip
Since $s(E,h)=\s'(E,h) 1$ it follows, cf.~\eqref{storlom} that
\begin{equation}\label{spagat}
s(E,h)=p_*\sum_{\ell=0}^\infty (dd^c\psi)^\ell,
\end{equation}
which is precisely the  definition \eqref{segdef} when
$h$ is smooth 
but with the
Monge-Ampère products interpreted
in the Bedford-Taylor sense.
Letting $\mu=1$, (ii) implies that 
\begin{equation}\label{slagga}
s(E,h_\e)\to s(E,h).
\end{equation}
Part (iv) means that
$
s(E\oplus E', h\oplus h')=\s(E',h')\w s(E,h).
$

\begin{remark}
If $h$ is  Griffiths negative, then one can take $A=0$ in \eqref{hEpsLocBddDef}.
The resulting approximation, in particular \eqref{slagga},
appeared in \cite{LRRS}.
\end{remark}




\subsection{The case with a general qas metric $h$}

Here is our main theorem on the Segre endomorphism $\s(E,h,h_0)$.

\begin{thm}\label{main11}
Assume that $E\to X$ is a holomorphic vector bundle equipped with a \qas\ metric $h$ and a smooth reference metric $h_0$, let $Z$ be the degeneracy locus of $h$,
and let $\s(E,h,h_0)$ be the associated Segre endomorphism on $\QZ(X)$.

\smallskip

\smallskip
\noindent
(i) If $h_\e$ is the metric defined by \eqref{regular1},
then
\begin{equation}\label{polo1}
\s(E,h_\e)\w \mu \to \s(E,h,h_0) \w\mu, \quad \e\to 0, \quad \mu\in \QZ(X).
\end{equation}

\smallskip
\noindent
(ii) The endomorphisms $\s_k(E,h,h_0)$ vanish for $k<0$ and $\s_0(E,h,h_0)=I$, the identity mapping.

\smallskip
\noindent
(iii) If $\tau\colon X'\to X$ is a proper mapping,
then $\tau^*h$ is \qas\
and
\begin{equation}\label{sformel1}
\tau_*( \s(\tau^*E,\tau^*h,\tau^* h_0)\w\mu)=\s(E,h,h_0)\w \tau_*\mu, \quad \mu\in\QZ(X').
\end{equation}

\smallskip
\noindent
(iv) For any $\mu\in \QZ(X)$ there is a current $w$ on $X$ such that
\begin{equation}\label{main2222}
(\s(E,h,h_0)- \s(E,h_0))\w\mu = dd^c w.
\end{equation}

\smallskip
\noindent (v)
If $E'$ is another holomorphic vector bundle, equipped with a smooth metric $h'$, then
\begin{equation}\label{main32}
\s(E'\oplus E,h'\oplus h,h'\oplus h_0)\w\mu= \s(E,h,h_0)\w \s(E',h')\w\mu, \quad \mu\in\QZ(X).
\end{equation}

\smallskip
\noindent
(vi)
For each $k$, $\s_k(E,h,h_0)$ yields a well-defined endomorphism on $\QB(X)$,
independent of $h_0$, and
if $h'$ is a \qas\ metric on $E$ comparable to $h$, then
$\s_k(E,h',h_0)$ yields the same endomorphism as $\s_k(E,h,h_0)$ on $\QB(X)$.

\smallskip
\noindent
(vii)
Assume that $(E_1,h_1)$ and $(E_2,h_2)$ are \qas\ bundles, with degeneracy loci $Z_1$ and $Z_2$, respectively, and
$h_{10}$ and $h_{20}$ be smooth reference metrics on these bundles.  Let $X'\subset\subset X$ be an open relatively compact subset.
For any $k,\ell$  and $\mu\in \GZ(Y')$, there is a current $w$ on $X'$, which is the pushforward of a current
on $Z_1 \cap Z_2\cap X'$, such that
$$
    (\s_k(E_1, h_1,h_{10})\w\s_\ell(E_2, h_2,h_{20})- \s_\ell(E_2,h_2,h_{20})\wedge\s_k(E_1,h_1,h_{10})) \wedge \mu = dd^c w.
$$
\end{thm}



\begin{remark} \label{remMain11}
\noindent (i) If $h_0$ and $h_0'$ are two smooth reference metrics,
then by the
first part of (vi),
\begin{equation}\label{trotyl}
\big(\s_k(E,h,h_0) - \s_k(E,h,h_0')) \w \mu, \quad \mu\in \QZ(X),
\end{equation}
belongs to $\QZ_0(X)$.
Since $\s(E,h,h_0)$ and $\s(E,h,h_0')$ coincide outside of $Z$,
it follows from Proposition~\ref{elefant} that if $\mu$ has bidegree $(q,q)$, then
\eqref{trotyl} vanishes if $k+q-1 < \codim Z$.

\noindent (ii)
If $\mu$ has bidegree $(q,q)$, then the current $w$ in (vii) has bidegree
$(k+\ell+q-1,k+\ell+q-1)$. Since $w$ is the pushforward of a current on $Z$, it vanishes
if $k+\ell+q -1< \codim Z$.
\end{remark}

\begin{proof}
Notice that if $\chi_\e(t)=\log(e^t+\e)$, then
$$
\psi_\e:=\log(|\alpha|_h^2+\e|\alpha|^2_{h_0})=\log(|\alpha|_h^2/|\alpha|^2_{h_0}+\e)+\log|\alpha|^2_{h_0}=\log(e^{\psi-\psi_0}+\e)+\psi_0 = \chi_\epsilon\circ (\psi-\psi_0) + \psi_0,
$$
so it follows from Theorem~\ref{kusin} that
\begin{equation*}
(dd^c \psi_\e)^k\w p^*\mu\to [dd^c\psi]^k_\omega\w p^*\mu.
\end{equation*}
Applying $p_*$ we get \eqref{polo1} and hence (i) is proved.

Now (ii) follows since it holds when the metric is smooth, and hence also when it
is  \qas\   and non-degenerate by Theorem~\ref{mainLocBdd}~(ii).

Notice that a holomorphic map $\tau \colon X'\to X$ induces a holomorphic map
$\tau\colon \Pk(\tau^*E)\to \Pk(E)$.
Part (iii) now follows from Definition~\ref{kontur} and
Proposition~\ref{ekorre2}~(iii).

\smallskip
In (iv) we can assume that $\mu=\tau_*\gamma$ where $\gamma$ is a product as in \eqref{na}
on $W$ and $\tau\colon W\to X$. In view of (iii) and by changing notation we have to prove
(iv) on $X$ but with $\mu= \gamma$.

If the metric on $E$ is identically $0$, i.e., $Z'=\Pk(E)$, then
$\s(E,h,h_0)=\s(E,h_0)$ and then the statement is trivial.
Let us thus assume that $Z'$ is not equal to $\Pk(E)$.
It may very well happen anyhow that $Z=X$.
Notice that $p^*\gamma$ is a product as in \eqref{na} on $\Pk(E)$.
Let $\psi$ and $\psi_0$ be as above.
Then $\psi-\psi_0$ is a global function on $\Pk(E)$ and, cf.~Proposition~\ref{kamaxel},
\begin{equation}\label{svin}
w'=\frac{\psi-\psi_0}{(1-dd^c\psi_0)(1-\la dd^c\psi\ra)}\w p^*\gamma
\end{equation}
is a locally finite measure. In view of \eqref{bajonett} and \eqref{bajontva} we have
\begin{equation}\label{stare11}
\1_{\Pk(E)\setminus Z'} dd^c w'= \left(\frac{1}{1-\la dd^c\psi \ra}-\frac{1}{1-dd^c\psi_0}\right) \w p^*\gamma
\end{equation}
and
\begin{equation}\label{stare12}
\1_{Z'} dd^c w'=\frac{1}{1-dd^c\psi_0}\1_{Z'} dd^c\psi \frac{1}{1-dd^c\psi} \w p^*\gamma=
s(L_E,e^{\psi_0})\w \sum_{\ell=1}^\infty M^\psi_\ell \w p^*\gamma.
\end{equation}
Notice that $Z'$ has positive codimension so that $M^\psi_0\w p^*\gamma=\1_{Z'} p^*\gamma=0$.
Letting $w=p_*w'$  we conclude from \eqref{segre}, \eqref{stare11} and \eqref{stare12} that
$
dd^c w=(\s(E,h,h_0)-\s(E,h_0))\wedge \mu.
$
Thus (iv) is proved.

\smallskip
We now consider (v).
The metrics $h'$ and $h_\e=h+\e h_0$ are both qas and non-degenerate.
By Theorem~\ref{mainLocBdd}~(iii) therefore
\begin{equation}\label{trall}
    \s(E, h+\e h_0)\w\s(E', (1+\e)h')\w \mu=\s(E'\oplus E, h'\oplus h+\e(h'\oplus h_0))\w\mu.
\end{equation}
Since $s(E', (1+\e)h')=s(E',h')$, (i) now implies that the left hand side of \eqref{trall} tends to $\s(E,h)\w \s(E',h')\w\mu$ and
the right hand side tends to $\s(E'\oplus E, h'\oplus h)\w\mu$.

\smallskip
We now consider (vi).
Let $h_0'$ denote another smooth reference metric.
By \eqref{segre} we see that $\s(E,h,h_0')-\s(E,h,h_0)$ is the image under $p_*$ of
$(s(L_E,e^{\psi_0'})-s(L_E,e^{\psi_0}))\w M^\psi$, which belongs to
$\QZ_0(\Pk(E))$. Now the first statement follows.
The second one follows from
Proposition~\ref{ekorre2}~(ii) and \eqref{torrfoder}.
Thus (vi) is proved. The proof of (vii) is postponed to Section~\ref{kompis}.
\end{proof}

\begin{proof}[Proof of Theorem~\ref{main1}]
Items (i) to (v) are essentially covered by Theorem~\ref{main11} with $\mu=1$.
In the decomposition \eqref{uppesittare} in (iii), one must have that $M^{E,h,h_0} = \1_Z s(E,h,h_0)$.
By \eqref{eq:restrGGZ}, $M^{h,h,h_0}$ belongs to $\QZ(X)$, and in particular, it is closed.
By Theorem~\ref{mainLocBdd} (i), $s'(E,h) = \1_{X\setminus Z} s(E,h,h_0)$ is independent of $h_0$,
and it coincides with the Segre form of $h$ on $X \setminus Z$ if $h$ is smooth there.

The decomposition \eqref{siu0} holds since $M^{E,h,h_0}$ is in $\QZ(X)$.
The statement about the codimension of the non-vanishing multiplicities of
$s_k'$ and $N_k$ follows from Theorem~\ref{superman}.
From Theorem~\ref{main11}~(vi) we know that $s(E,h,h_0)$ and $s(E,h',h_0')$ in (vii) are in the same
class in $\QB(X)$. Hence $M^{E,h,h_0}$ and $M^{E,h',h_0'}$ are as well by \eqref{stare}.
By Proposition~\ref{gorilla} they therefore have the same multiplicities.

We now prove that these numbers are non-negative. 
In a small open \nbh $\U$ of a given point on $X$, $E=\U\times \C^r_\alpha$, and therefore
$p^{-1}\U=\U\times\Pk(\C^r_\alpha)$. If we choose a trivial metric $h_0$ on $E$, then
$\omega_\alpha = s_1(L_E,h_0)$ is positive.  
Assume that the metric is inherited from a morphism $g\colon E\to F$ and $F$
has a trivial metric. Then $\psi=\log|g(x)\alpha|^2$ is psh.
In that case, $s(L_E,e^\psi,e^{\psi_0})$ is a positive current, and hence, so is
$s(E,h,h_0) = p_*(s(L_E,e^\psi,e^{\psi_0}))$. Furthermore, each of the three terms in the decomposition
$s(E,h,h_0) = s'(E,h) + S^{E,h} + N^{E,h,h_0}$ is positive as well, and in particular,
all their components will have non-negative Lelong numbers.
By Lemma~\ref{knepig1} any \qas\ metric $h$ on $E$ is locally equivalent to a psh metric as above.
Thus (vi) is proved. 
\end{proof}

\begin{remark}\label{knepig2}
The proof of the positivity of the multiplicities refers to Lemma~\ref{knepig1} which in turn requires that $E$ is  \qas\ and not only that $L_E$ is \qas,
cf. Remark~\ref{viktig}. The remainder of the theorem holds assuming only that $L_E$ is \qas.
\end{remark}

\begin{remark} \label{mainQqasRcas}
Almost all of Theorem~\ref{main1} and Theorem~\ref{main11} have natural extensions
to $\Q$-\qas\ or $\R$-\qas\ metrics. Indeed, if $h$ is $\Q$-\qas\ or $\R$-\qas,
it follows readily that $\s(E,h,h_0)$ yields an endomorphism on $\QZ_\Q(X)$ or $\QZ_\R(X)$,
cf. Remark~\ref{Qqas}. In that case, the multiplicities referred to in the Theorem~\ref{main1} would
be non-negative rational or real numbers, cf.~Remark~\ref{multQ}.
\end{remark}

\subsection{Composition of Segre endomorphisms}\label{kompis}

\begin{ex}\label{grit}
With the notation in the proof of Lemma~\ref{tomte1},
and $\mu=\tau_*\gamma$, we have
\begin{equation}\label{skrot}
\s_k(E,h,h_0)\w \mu=\Pi_*(([dd^c(\tau')^*\psi]_{(\tau')^*\omega}^{k+r-1}\w (p')^*\gamma),
\end{equation}
where the product on the right takes place on $\Pk(\tau^*E)$ and $\Pi=p\circ \tau'=\tau\circ p'$.
To see \eqref{skrot} just notice that
$$
p_*( [dd^c\psi]_\omega^{k+r-1} \w \tau'_* (p')^*\gamma)=p_*\tau'_*( [dd^c(\tau')^*\psi]_{(\tau')^*\omega}^{k+r-1} \w (p')^*\gamma),
$$
and this holds because $\Gamma= (p')^*\gamma$ is a product as in \eqref{na},  and from \eqref{harry2} therefore
$$
[dd^c\psi]^{k+r-1}_\omega \w \tau'_*\Gamma=\tau'_*\big([dd^c(\tau')^*\psi]^{k+r-1}_{(\tau')^*\omega} \w \Gamma\big).
$$
Notice that $\Pk(\tau^* E)=\Pk(E)\times_X W$.
\end{ex}

Let $(E_1,h_1)$ and $(E_2,h_2)$ be two qas bundles over $X$.
Consider \eqref{dia23} with $E=E_2$, $W=\Pk(E_1)$ and $\tau=p_1$. We then get the fiber diagram
\begin{equation}\label{dia24}
\begin{array}[c]{cccc}
Y &  \stackrel{\pi_2}{\longrightarrow}  & \Pk(E_2) \\
\downarrow \scriptstyle{\pi_1} & &  \downarrow \scriptstyle{p_2}  \\
\Pk(E_1) & \stackrel{p_1 }{\longrightarrow}  & X
\end{array}
\end{equation}
with
\begin{equation}
Y=\Pk(E_2)\times_{X} \Pk(E_1), \quad
\Pi=p_1\circ\pi_1=p_2\circ \pi_2\colon Y\to X.
\end{equation}
%
 Let $h_{10}$ and $h_{20}$ be smooth reference metrics on $E_1$ and $E_2$ of rank $r_1$ and $r_2$, respectively.
In view of Example~\ref{grit} and \eqref{torrfoder} we have
\begin{multline*}
\s_{\ell_2}(E_2,h_2,h_{20})\w s_{\ell_1}(E_1,h_1,h_{10})=
(p_2)_*( [dd^c\psi_2]^{\ell_2 +r_2-1}_{\omega_2}\w p_1^*s_{\ell_1}(E,h_1,h_{10}))=\\
(p_2)_*( [dd^c\psi_2]^{\ell_2 +r_2-1}_{\omega_2}\w (\pi_2)_* [dd^c\pi_1^* \psi_1]^{\ell_1 +r_1-1}_{\pi_1^*\omega_1})=
 \Pi_*\big( [dd^c\pi_2^* \psi_2]^{\ell_2 +r_2-1}_{\pi_2^*\omega_2}\w  [dd^c\pi_1^*\psi_1]^{\ell_1 +r_1-1}_{\pi_1^*\omega_1}\big).
\end{multline*}
where $e^{\psi_j}$ and $e^{\psi_{j0}}$ denote the metrics on $L_{E_j}$ induced by $h_j$ and $h_{j0}$, and
$\omega_j=s_1(L_{E_j},e^{\psi_{j0}})$.
In the same way
\begin{equation}\label{august}%
\s_{\ell_2}(E_2,h_2,h_{20})\w \s_{\ell_1}(E_1,h_1,h_{10})\w \mu=
 \Pi_*\big( [dd^c\pi_2^* \psi_2]^{\ell_2 +r_2-1}_{\pi_2^*\omega_2}\w  [dd^c\pi_1^*\psi_1]^{\ell_1 +r_1-1}_{\pi_1^*\omega_1}\w\Pi^*\mu\big),
\end{equation}

\begin{ex}\label{grit1}
Let $(E_1, h_1),\ldots, (E_k,h_k)$ be disjoint copies of $(E,h)$.
Then, similarly,
\begin{equation}\label{2kryss}
\s_{\ell_k}(E,h,h_0)\w\cdots \w\s_{\ell_1}(E,h,h_0)\w \mu=\Pi_*([dd^c\pi_k^*\psi_k]_{\pi_k^*\omega_k}^{\ell_k+r-1}\w
\cdots\w [dd^c\pi_1^* \psi_1]_{\pi_1^*\omega_1}^{\ell_1+r-1}\w \Pi^*\mu \big). 
\end{equation}
In particular, our definition of product of Segre forms coincides with the product defined in
\cite[Section~6.1]{LRSW}, cf.~also Remark~\ref{remLRSWcomparison}.
\end{ex}

\begin{lma} \label{s1comm}
Assume that $L_1,L_2$ are \qas\ line bundles over a manifold $Y$, and let $Z=Z_{L_1} \cap Z_{L_2}$ denote the intersection
of their degeneracy loci.  
Let $Y'\subset\subset Y$.
For any $\mu \in \QZ(Y')$ there exists  a current $w$ on $Y'$,
that is the push-forward of a current on $Z\cap Y'$, such that
$$
(\s_1(L_1) \w \s_1(L_2) - \s_1(L_2) \w \s_1(L_1)) \w \mu
= dd^c w.
$$
\end{lma}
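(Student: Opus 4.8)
The plan is to first reduce to line bundles and to simple models, then run a careful bookkeeping of the cut-off operators $\1$. Since $E_1,E_2$ are line bundles, the functions $v_j:=\psi_j-\psi_{0,j}$, where $\psi_j$ is the local weight of $h_j$ and $\psi_{0,j}$ that of the smooth reference metric, are \emph{global} \qas\ functions on $X$, and by \eqref{harry} (with $p=\mathrm{id}$ and $r=1$ in Definition~\ref{kontur}) one has $\s_1(E_j)\w\nu=\omega_j\w\nu+[dd^c v_j]\w\nu$, where $\omega_j=dd^c\psi_{0,j}$ is a closed smooth $(1,1)$-form, $Z_j$ is the unbounded locus of $v_j$, and $[dd^c v_j]\w\nu=dd^c(v_j\1_{X\setminus Z_j}\nu)$ is the operator of Proposition~\ref{kamaxel}. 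By linearity I may assume $\mu=\tau_*\gamma$ for $\tau\colon W\to X$ proper and $\gamma$ a Bedford--Taylor product \eqref{na}; using Theorem~\ref{main11}~(iii), together with the fact that $Z_{\tau^*E_j}=\tau^{-1}Z_j$ so that $\tau^{-1}(Z)=Z_{\tau^*E_1}\cap Z_{\tau^*E_2}$ and pushing a current on $\tau^{-1}(Z)$ forward lands on $Z$, I reduce to $\mu=\gamma$ on $X$. A further modification (as in Section~\ref{MAhom}) then lets me assume the singularities of $v_1,v_2$ are principalized, $v_j=\log|\sigma_j|^2+a_j$ with $\sigma_j$ a holomorphic section of a Hermitian line bundle $\La_j$ and $a_j$ locally bounded qpsh, with $\dv\sigma_1$ and $\dv\sigma_2$ having no common component; then $Z_j=\{\sigma_j=0\}$ and $Z=Z_1\cap Z_2$ has codimension $\ge 2$.

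The heart of the argument is the following computation. By Poincar\'e--Lelong, $dd^c(v_j\gamma)=[\dv\sigma_j]\w\gamma+\tilde\nu_j\w\gamma$ with $\tilde\nu_j=dd^c a_j-c_1(\La_j)$ a current of the form \eqref{na}, cf.\ \eqref{kontor}--\eqref{baktankar}, so that $\s_1(E_j)\w\gamma=\omega_j\w\gamma+[\dv\sigma_j]\w\gamma+\tilde\nu_j\w\gamma$. Expanding $\s_1(E_1)\w\s_1(E_2)\w\gamma$ and $\s_1(E_2)\w\s_1(E_1)\w\gamma$ via $\s_1(E_j)=\omega_j\cdot+[dd^c v_j]$, the claim is that if one (incorrectly) dropped the cut-off $\1_{X\setminus Z_j}$ from $[dd^c v_j]$, the two expansions would be term-by-term equal: every resulting term is a wedge product of the closed smooth forms $\omega_j$, the currents $\tilde\nu_j$ and the divisor currents $[\dv\sigma_j]$, and $\gamma$, all of which commute — by commutativity of Bedford--Taylor products, by commutativity of intersection with divisors (Lemma~\ref{elbas}~(ii)), and because the $\omega_j$ are closed. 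Hence the whole commutator $D:=\big(\s_1(E_1)\w\s_1(E_2)-\s_1(E_2)\w\s_1(E_1)\big)\w\gamma$ is produced by the cut-offs, and tracking them should give two things. First, $D$ is supported on $Z$: away from $Z$, locally at least one $\sigma_j$, say $\sigma_2$, is non-vanishing, so there $v_2$ is locally bounded and $[dd^c v_2]$ is simply wedging with the closed current $dd^c\psi_2$ of locally bounded potential, and $\s_1(E_1)$ genuinely commutes with it — the only nontrivial point, that intersection with $\dv\sigma_1$ commutes with wedging by $dd^c\psi_2$, holds because $\psi_2$ restricts to a locally bounded function on each component of $\dv\sigma_1$. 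Second, $D\in\QZ_0(X)$: in the expansion, the ``cycle-type'' contributions (built only from $\omega_j$'s and $[\dv\sigma_j]$'s) cancel between the two orderings, while every surviving contribution carries a factor $\tilde\nu_j$ or $dd^c a_j$, i.e.\ $\pm$ a beta form, so restricting its divisor factors by Lemma~\ref{elbas}~(ii) exhibits it as a pushforward of a beta form wedged with a product \eqref{na}, hence an element of $\QZ_0(X)$; this is also consistent with Proposition~\ref{ekorre}~(i)--(ii) applied to the difference of the two orders of composition.

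Granted these two facts — $D\in\QZ_0(X)$ and $\supp D\subset Z$ — Remark~\ref{remQZ0Support} produces a current $w$ of order $0$ that is the pushforward of a current on $Z$ with $dd^c w=D$. Pushing forward along the modifications of the first step and re-summing over the pieces of $\mu$ gives the desired $w$ on the original manifold, a pushforward of a current on $Z=Z_{E_1}\cap Z_{E_2}$, which is the assertion. The main obstacle is precisely the bookkeeping in the second step: one must follow the restriction operators $\1_{X\setminus Z_j}$ through the expansion to check both that the ``naive'' terms cancel and that the residual cut-off terms are supported on $Z$ and lie in $\QZ_0$. As in the proofs of Proposition~\ref{kamaxel} and Proposition~\ref{ekorre}, this is done by reducing to the principalized model, where $\1_{X\setminus Z_j}$ acts transparently on currents whose Zariski support is adapted to the (normal-crossing) divisors $\dv\sigma_j$, and the needed commutation identities become direct computations with divisor currents and locally bounded potentials.
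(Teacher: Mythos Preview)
There is a genuine gap. Your reduction step asserts that after a suitable modification one may assume $\dv\sigma_1$ and $\dv\sigma_2$ have no common component, and hence that $Z=Z_1\cap Z_2$ has codimension $\ge 2$. This cannot be arranged in general: if $Z_{E_1}$ and $Z_{E_2}$ share an irreducible component, they continue to do so after any modification. Without this assumption, your claim that the commutator $D$ lies in $\QZ_0(X)$ fails. Concretely, take $X=\C^2$, trivial line bundles with weights $v_1=\log|z_1|^2$ and $v_2=\log|z_1z_2|^2$, trivial reference metrics ($\omega_j=0$), and $\gamma=1$. Then
\[
[dd^c v_1]\w[dd^c v_2]\w 1=[dd^c v_1]\w([z_1=0]+[z_2=0])=[z_1=z_2=0],
\]
since $[dd^c v_1]\w[z_1=0]=0$ by the cut-off, whereas
\[
[dd^c v_2]\w[dd^c v_1]\w 1=[dd^c v_2]\w[z_1=0]=0,
\]
since $\{z_1=0\}\subset Z_2$. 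Thus $D=[z_1=z_2=0]$, a nonzero analytic cycle, which is \emph{not} in $\QZ_0(X)$. The ``cycle-type'' contributions do not cancel between the two orderings precisely because the cut-offs kill different pieces when the divisors share components. Consequently the route via Remark~\ref{remQZ0Support} is blocked.

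The paper proceeds differently and this difference is essential. After principalization it decomposes $dd^c\psi_j=[D_j]+[W_j]+\omega_j'$ where $|D_1|=|D_2|$ collects the common components and $W_1,W_2,D_1$ intersect pairwise properly. A direct computation then gives
\[
D=\big([D_1]\w([W_2]+\omega_2'-\omega_2)-[D_2]\w([W_1]+\omega_1'-\omega_1)\big)\w\gamma,
\]
and the paper writes down an explicit potential $w=(u_1[D_2]-u_2[D_1])\w\gamma$ for suitable \qas\ functions $u_j$ whose unbounded locus is $|W_j|$ (so $u_j|_{D_{3-j}}$ is locally integrable). This $w$ is manifestly the push-forward of a current on $|D_1|=|D_2|\subset Z$. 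Your argument for the support of $D$ is in the right spirit, but the passage to a potential supported on $Z$ requires this explicit splitting into common and proper parts rather than an appeal to $\QZ_0$.
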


For simplicity we write $\s(L_j)$ rather than $\s(L_j, e^{\psi_j}, e^{\psi_{j0}})$.

\begin{proof}
By taking $\mu = \tau_* \gamma$, $\tau \colon W \to Y$, where $\gamma$ is of the form \eqref{na}, we may
assume by \eqref{sformel1}, after replacing $Y$ by $W$, that $\mu$ is of the form \eqref{na}.
Let $\psi_j$ denote the metric on $L_j$, for $j=1,2$, and let $\omega_j$ denote the first Segre form of the smooth reference metric on $L_j$.
Either some of $\psi_j \equiv -\infty$,
say it is the case for $\psi_1$. Then $\s_1(L_1)$ acts by multiplication with the smooth form $\omega_1$,
which commutes with $\s_1(L_2)$, and we may take $w_0 = 0$.
Hence we can assume that $\psi_j \not\equiv -\infty$ for both $j$.

In view of  Proposition~\ref{ekorre2}~(iii) we can replace $Y$ by any  modification of it.  As in the beginning of Section~\ref{MAhom}
we may therefore assume that
$$
dd^c \psi_j = [D_j] + [W_j] + \nu_j,
$$
where $D_j,W_j$ are divisors for $j=1,2$, $|D_1|=|D_2|$,
and all irreducible components of $W_1,W_2,D_1$ intersect properly. 
Notice that $\s_1(L_1)\w [D_2]\w\mu=\omega_1\w [D_2]\w\mu$ whereas
$$
\s_1(L_1)\w ([W_2] + \nu_2)\w\mu=([D_1]+[W_1]+\nu_1)\w ([W_2]+\nu_2)\w\mu,
$$
where the right hand side can be computed formally since the intersections that occur are proper.
We therefore get %
\begin{multline}\label{januari}
(\s_1(L_1) \w \s_1(L_2) - \s_1(L_2) \w \s_1(L_1)) \w \mu
 = ([D_1] \w ([W_2]+\nu_2-\omega_2) - [D_2] \w ([W_1]+\nu_1-\omega_1))\w \mu.
\end{multline}
If we choose a smooth metric on $\Ok(D_j)$ we get a smooth representative $\alpha_j$
of its Bott-Chern class.  Since $[W_j]+\nu_j-\omega_j$ defines the same class there is a global function $u_j$
such that
$$
dd^c u_j=[W_j]+\nu_j-\omega_j-\alpha_j.
$$
Locally $u_j$ is a potential to $\nu_j$ plus a potential to $[W_j]$ plus a smooth function, and hence it is \qas.
It now follows, cf.~the proof of Lemma~\ref{elbas2}, that
$$
dd^c (u_1 [D_2]\w\mu)=[dd^c u_1]\w [D_2]\w\mu= ([W_1]+\nu_1-\omega_1-\alpha_1)\w[D_2]\w\mu.
$$
Thus $w = (u_1 [D_2] - u_2 [D_1]) \wedge \mu
$
is a form with support on $Z=|D_1|=|D_2|$ such that, $dd^c w$ is equal to
\eqref{januari} minus
\begin{equation}\label{januari2}
(\alpha_1 [D_2]-\alpha_2 [D_1])\w\mu.
\end{equation}

It remains to find a $dd^c$-potential to  \eqref{januari2} in $Y'$ that is the push-forward of a current on $Z\cap Y'$.
Since  $Y'\subset\subset Y$,  $Z\cap Y'$ has a finite number of irreducible components
$D^\ell$. 
There are integers $a_j^\ell$ such that
$
D_j=\sum_\ell a_j^{\ell} D^\ell, \ j=1,2.
$
If $\omega^\ell$ are smooth representatives of the Bott-Chern class of $[D^\ell]$, then we can choose
$
\alpha_j=\sum_\ell a_j^\ell\omega^\ell
$
on $Y'$.
Thus
$$
(\alpha_1 [D_2]-\alpha_2 [D_1])\w\mu=\sum_{\ell\neq k}a_1^\ell a_2^k(\omega^\ell[D^k]-\omega^k[D^\ell])\w\mu.
$$
If $v_\ell$ are functions such that $dd^c v_\ell=[D^\ell]-\omega^\ell$, then as before, suppressing $\w \mu$ in the notation,
$$
dd^c (v_\ell [D^k]-v_k[D^\ell])=\omega^\ell[D^k]-\omega^k[D^\ell]-([D^\ell]\w[D^k]-[D^k]\w[D^\ell])=\omega^\ell[D^k]-\omega^k[D^\ell]
$$
since the $ [D^\ell]\w[D^k]$ is a proper intersection and thus commuting. Thus
$w=\sum a_1^\ell a_2^k(v_\ell [D^k]-v_k[D^\ell])$ is a $dd^c$-potential to
\eqref{januari2} as desired. 
\end{proof}

\begin{proof}[Proof of Theorem~\ref{main11}~(vii)]
Let $Y=\Pk(E_2)\times_X \Pk(E_1)$ be as above, let $L_j\to Y$ be the pullback to $Y$  of the  tautological line bundle $L_{E_j}\to \Pk(E_j)$
and let $Y'$ be the analogous set but over $X'\subset\subset X$.  Then $Y'\subset\subset Y$. For simplicity we write $Y$ rather than $Y'$ in what follows.
By \eqref{august} it is enough to prove the similar statement for $L_j\to Y$, cf.~\eqref{segreL}, and $\ell,k$ replaced by $k+r_2-1, \ell+r_1-1$.
%
Since $L_j$ are line bundles,
$\s_k(L_j) = \s_1^k(L_j)$, and so
$$
\big(\s_k(L_1) \w \s_\ell(L_2) - \s_\ell(L_2) \w \s_k(L_1)\big) \w \mu
= (\s_1(L_1)^k \w \s_1(L_2)^\ell - \s_1(L_2)^\ell \w \s_1(L_1)^k) \w \mu.
$$
We may rewrite the right-hand side as a sum of terms of the form
$$
\mathbf{S}^+ \w \big( \s_1(L_1) \w \s_1(L_2) - \s_1(L_2) \w \s_1(L_1)\big) \w \mathbf{S}^- \w \mu,
$$
where $\mathbf{S}^+$ and $\mathbf{S}^-$ are products of $\s_1(L_1)$ and $\s_1(L_2)$.
Let $Z = Z_{L_1} \cap Z_{L_2}$. It suffices to prove that each such
term is of the form $dd^c w$, where $w$ is a current that is the
push-forward of a current on $Z$.
Since $\mathbf{S}^- \w \mu$ is in $\QZ$, we obtain by Lemma~\ref{s1comm} a current
$w_0$, which is the push-forward of a current on $Z$,  such that
$$
\big(\s_1(L_1) \w \s_1(L_2) - \s_1(L_2) \w \s_1(L_1)\big) \w \mathbf{S}^- \w \mu
= dd^c w_0,
$$
where $w_0$ is current that is the push-forward of a current on $Z$.
Note that $w_0$ has support on $Z$, and both $\s_1(L_1)$ and $\s_1(L_2)$ act by multiplication
with a closed smooth form on currents in $\QZ$ with support on $Z$,
and hence $\mathbf{S}^+$ acts as multiplication with a closed smooth form $S^+_0$,
so we may take $w = S^+_0 \w w_0$.
\end{proof}

\section{Chern forms of \qas\ bundles}\label{csection}

Let $(E,h)$ be a \qas\ bundle, $h_0$ a smooth reference metric, and let
$\hat \s(E,h,h_0)=\s(E,h,h_0)-I$.
In this section, we will sometimes drop $h$ and $h_0$ in the notation.
In view of Theorem~\ref{main11}~(ii), $\hat \s(E):=\s(E)-I=\s_1(E)+\s_2(E)+\cdots$
is an endomorphism on $\QZ(X)$ of positive bidegree.
Let $\hat \s(E)^k$ the composition of it $k$ times.
We define the Chern endomorphism
\begin{equation}\label{cherndef}
\c(E,h,h_0):=\sum_{k=0}^\infty (-1)^k \hat \s(E,h,h_0)^k =I -\hat \s(E,h,h_0)+ \hat \s(E,h,h_0)^2+\cdots.
\end{equation}

\begin{prop}\label{main2prop}
Let $E\to X$ be an \qas\ bundle. Then
\begin{equation}\label{trollmor}
\c(E,h,h_0)\w\s(E,h,h_0)=\s(E,h,h_0)\w\c(E,h,h_0)=I.
\end{equation}
The statements in Theorem~\ref{mainLocBdd} and (ii)-(vi) of Theorem~\ref{main11}
hold if $\s(E,h,h_0)$ is replaced by $\c(E,h,h_0)$.
\end{prop}

\begin{proof}
Clearly $\s(E,h,h_0)$ and $\hat \s(E,h,h_0)$ commute, and hence the computation
$$
\s(E,h,h_0)\w\c(E,h,h_0)=(I+\hat\s(E,h,h_0))\sum_{k=0}^\infty(-1)^k \hat \s(E,h,h_0)^k=I,
$$
is legitimate. The other equality in \eqref{trollmor} follows in the same way.

\smallskip
We now consider the analogue for $\c$ of Theorem~\ref{mainLocBdd}, so assume that $h$ is \qas\ and non-denegerate.
Then clearly $c(E,h)$ is independent of $h_0$.
If  $h_\e$ is a local regularization, then
it follows from \eqref{sotare} that the
analogue for $\c(E,h)$ of 
part (ii) holds.  Part (iii)  implies its analogue for $\c$: Suppressing $h$ and $h'$ in the notation,
by (i) and (iii) for $\s$ we have  $I=\s(E'\oplus E)\w\c(E'\oplus E) =\s(E)\w \s(E')\w \c(E'\oplus E)$.
Multiplying by $\c(E')$ from the right and $\c(E)$ from the left we get
$\c(E)\w \c(E')=\c(E'\oplus E)$.

From \eqref{cformel} we see that
$\c_k(E,h)$ can be expressed as a sum of commuting compositions $\s_{\ell_1}(E,h)\cdots \s_{\ell_m}(E,h)$.
That is,
$$
\c_1(E,h)=-\s_1(E,h), \quad \c_2(E,h)=\s_2(E,h)- \s_1(E,h)^2, \quad etc.
$$
Now (iv) follows for $\c$.

\smallskip


It remains to prove the analogous statements of Theorem~\ref{main11} when
$\s(E)$ is replaced by $\c(E)$.
Part (ii) is obvious, and part (iii) for $\c(E)$ follows from (iii) for $\s(E)$.
To see (iv), notice that $\s(E,h,h_0)-\s(E,h_0)=\hat\s(E,h,h_0)-\hat\s(E,h_0)$ and that $\hat\s(E,h,h_0)$ and $\hat\s(E,h_0)$ commute.
By Theorem~\ref{main11}~(iv), for each $k$, there are $w_k$ such that
$$
dd^c w_k=(\s(E,h,h_0)-\s(E,h_0))\w \sum_{j=0}^{k-1} \hat\s(E,h,h_0)^j \w \hat\s(E,h_0)^{k-1-j}\w\mu.
$$
If $w=-w_1+w_2-\cdots$, then $dd^c w=(\c(E,h,h_0)-\c(E,h_0))\w\mu$.
We get (v) for $\c$ from the statement for $\s$ by the same formal computation as above in the non-degenerate case.
Finally, we get (vi) for $\c$ by repeated use of (vi) for $\s$.
\end{proof}

\begin{proof}[Proof of Theorem~\ref{main2}]
The theorem follows from Proposition~\ref{main2prop} and the fact that the Chern form $c(E,h,h_0) := \c(E,h,h_0) 1$
is in $\QZ(X)$, cf.~Section~\ref{qzsection} in the same way that Theorem~\ref{main1} follows from Theorem~\ref{main11}
and the fact that the Segre form $s(E,h,h_0)$ is in $\QZ(X)$.
 \end{proof}

If  $h$ is \qas\ and non-degenerate, then
by the analogue of Theorem~\ref{mainLocBdd}~(ii), %
$$
c(E,h) = \lim_{\e\to 0} c(E,h_\epsilon)
$$
where $h_\e$ are smooth metrics. %
In particular,
\begin{equation}\label{kloker}
c_k(E,h)=0, \quad k>\rank E,
\end{equation}
since this holds for $c_k(E,h_\e)$.
We do not know whether  $c_k(E,h,h_0)=0$ for $k>\rank E$ if $h$ is a general
\qas\ metric.

\subsection{An alternative endomorphism}
Let $(E,h)$ be a qas bundle on $X$ and a let $h_0$ be a smooth reference metric. We shall now consider an alternative extension
to an endomorphism on $\QZ(X)$ which leads to a different Chern form, that appeared in \cite{Asommar}.
We will use the notation $s(E):=s(E,h,h_0)$ and $s(E_0)=s(E,h_0)$. Let $Z$ be the degeneracy locus of $h$.

We introduce the endomorphism
\begin{equation}\label{szdef}
\s(E,Z)\w\mu=\s(E)\w \1_{X\setminus Z}\mu+\s(E_0)\w \1_Z \mu, \quad \mu\in \QZ(X).
\end{equation}
If $Z=X$, then it is just $\s(E_0)$ so it is of interest only when $Z$ has positive codimension,
which we will assume from now on. Note that if $s(E,Z) := \s(E,Z) 1$, then
\begin{equation} \label{eq:sEZ}
   s(E,Z) = s(E)
\end{equation}
since $\1_Z 1 = 0$ and $\1_{X\setminus Z} 1 = 1$ as long as $Z$ has positive codimension.

\begin{prop}
Let $E\to X$ be an \qas\ bundle and $\mu\in \QZ(X)$.

 \smallskip
\noindent
(i)
\begin{equation} \label{eq:sEZ2}
  (\s(E,Z)-\s(E)) \wedge \mu = (\s(E)-\s(E_0)) \wedge \1_Z \mu.
\end{equation}

\noindent (ii)
There is a current $w$, which is the pushforward of a current on $Z$, such that
\begin{equation} \label{eq:sEZ3}
    (\s(E,Z)-\s(E)) \wedge \mu = dd^c w.
\end{equation}
\end{prop}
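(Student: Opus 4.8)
The plan is to deduce both statements from the definition \eqref{szdef} and the properties of the operators $\1_Z$ and $\s(E),\s(E_0)$ established in Theorem~\ref{main11} and Proposition~\ref{ekorre}. For part (i), start from \eqref{szdef} and write $\mu = \1_{X\setminus Z}\mu + \1_Z\mu$, noting that $\1_{X\setminus Z}$ and $\1_Z$ are well-defined on $\QZ(X)$ since elements of $\QZ(X)$ are closed currents of order $0$ (cf.\ the discussion after Lemma~\ref{elbas}). Then
\[
(\s(E,Z)-\s(E))\wedge\mu = \s(E)\wedge\1_{X\setminus Z}\mu + \s(E_0)\wedge\1_Z\mu - \s(E)\wedge\1_{X\setminus Z}\mu - \s(E)\wedge\1_Z\mu,
\]
and the first and fourth terms cancel, leaving $(\s(E_0)-\s(E))\wedge\1_Z\mu$, which is \eqref{eq:sEZ2}. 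This is essentially a one-line computation once one is careful that $\1_Z$ really does act on $\QZ(X)$.

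For part (ii), the idea is to combine (i) with Theorem~\ref{main11}~(iv). By Theorem~\ref{main11}~(iv) applied to the quasi-cycle $\1_Z\mu$ (which lies in $\QZ(X)$ by \eqref{eq:restrGGZ}), there is a current $w'$ on $X$ with
\[
(\s(E,h,h_0)-\s(E_0))\wedge\1_Z\mu = dd^c w'.
\]
Combined with \eqref{eq:sEZ2} this gives $(\s(E,Z)-\s(E))\wedge\mu = -dd^c w'$, so $w = -w'$ works as a current; the only remaining point is to arrange that $w$ be the pushforward of a current \emph{on $Z$}.

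The main obstacle is precisely this last refinement: Theorem~\ref{main11}~(iv) only produces \emph{some} current $w'$ on $X$, not one supported on $Z$. To fix this I would argue as follows. The current $(\s(E,Z)-\s(E))\wedge\mu = (\s(E_0)-\s(E))\wedge\1_Z\mu$ manifestly has support on $Z$, since $\1_Z\mu$ does and both $\s(E_0)$ and $\s(E)$ map currents supported on an analytic set to currents supported on that same analytic set (the operators are built from $p_*$, $p^*$ and Monge--Amp\`ere-type products, each of which preserves Zariski supports). Moreover this current lies in $\QZ_0(X)$: indeed $(\s(E)-\s(E_0))\wedge\nu \in \QZ_0(X)$ for every $\nu \in \QZ(X)$ — this is contained in the proof of Theorem~\ref{main11}~(iv) and (vi), where one sees $\s(E,h,h_0)-\s(E,h_0)$ is the $p_*$-image of something in $\QZ_0(\Pk(E))$, cf.\ \eqref{segre} and Proposition~\ref{ekorre2}~(ii). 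Having a current in $\QZ_0(X)$ with support on the analytic subspace $Z$, Remark~\ref{remQZ0Support} gives exactly what we need: a current $w$ of order $0$ which is the pushforward of a current on $Z$ with $dd^c w$ equal to that current, i.e.\ $dd^c w = (\s(E,Z)-\s(E))\wedge\mu$. I would therefore phrase the proof of (ii) directly through Remark~\ref{remQZ0Support} rather than through Theorem~\ref{main11}~(iv), using (i) only to identify the left-hand side as $(\s(E)-\s(E_0))\wedge\1_Z\mu$, which is simultaneously supported on $Z$ and a member of $\QZ_0(X)$.
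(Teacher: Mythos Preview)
Your argument for (i) is essentially the paper's (the paper just says ``follows readily by definition''); note that it is the first and \emph{third} terms that cancel, and the result you obtain, $(\s(E_0)-\s(E))\wedge\1_Z\mu$, differs in sign from what is printed in \eqref{eq:sEZ2} --- this appears to be a typo in the statement and is harmless for (ii).

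For (ii), however, there is a real gap. You claim that $(\s(E)-\s(E_0))\wedge\nu\in\QZ_0(X)$ for every $\nu\in\QZ(X)$ and then invoke Remark~\ref{remQZ0Support}. This claim is not proved in the paper and is in fact false. Theorem~\ref{main11}~(iv) only asserts that $(\s(E)-\s(E_0))\wedge\nu$ is $dd^c$-exact, which is strictly weaker than membership in $\QZ_0$; and Theorem~\ref{main11}~(vi) concerns the difference $\s(E,h,h_0)-\s(E,h,h_0')$ between two \emph{reference} metrics, not the difference $\s(E,h,h_0)-\s(E,h_0)$. A concrete counterexample: take $E$ the trivial line bundle on $\C$ with $|\alpha|_h^2=|z|^2|\alpha|^2$ and $h_0$ trivial. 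Then $(\s_1(E)-\s_1(E_0))\wedge 1=dd^c\log|z|^2=[0]$, which has multiplicity $1$ at the origin and hence does not lie in $\QZ_0(\C)$ (elements of $\QZ_0$ have vanishing multiplicities by Proposition~\ref{gorilla}). So Remark~\ref{remQZ0Support} is not applicable.

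The paper fixes the support issue differently: since $\1_Z\mu$ has support on $Z$, Proposition~\ref{tax} gives $\mu_Z\in\QZ(Z)$ with $i_*\mu_Z=\1_Z\mu$. One then applies Theorem~\ref{main11}~(iv) \emph{on $Z$} to $i^*E$ and $\mu_Z$, obtaining $w'$ on $Z$ with $(\s(i^*E)-\s(i^*E_0))\wedge\mu_Z=dd^c w'$; pushing forward and using the functoriality in Theorem~\ref{main11}~(iii) gives $w=i_*w'$. (There is a small wrinkle that $Z$ may be singular while Theorem~\ref{main11} is stated for manifolds; one either observes the proof works verbatim on reduced spaces, or passes through a resolution of singularities.) Your initial instinct --- apply Theorem~\ref{main11}~(iv) to $\1_Z\mu$ --- was right; the missing idea is to apply it on $Z$ rather than on $X$.
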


\begin{proof}
Equation \eqref{eq:sEZ2} follows directly from the definition of $\s(E,Z)$.
To prove (ii), first notice that since $\1_Z \mu$ is in $\QZ(X)$ and has support on $Z$,
by Proposition~\ref{tax}, there is a current $\mu_Z \in \QZ(Z)$ such that $\1_Z \mu = i_* \mu_Z$, where $i \colon Z \to X$ denotes the inclusion.
By Theorem~\ref{main11} (iv) there is $w' \in \QZ(Z)$ such that
$$
    (\s(i^*E)-\s(i^* E_0)) \wedge \mu_Z = dd^c w'.
$$
To be precise, we have formulated Theorem~\ref{main11} only for a complex manifold, but
the proof if parts (iii) and (iv) works just as well if $Z$ is any reduced analytic space. 
Hence, \eqref{eq:sEZ3} follows by (i) together with Theorem~\ref{main11} (iii) with $w = i_* w'$.
\end{proof}

It follows from \eqref{szdef} that $\s_0(E,Z)\w\mu=\mu$ so that $\s_0(E,Z)=I$, the identity on
$\QZ(X)$.
We can thus define
\begin{equation}\label{hatdef}
\c(E,Z)\w\mu=\sum_{k=0}^\infty (-1)^k \hat \s(E,Z)^k\w\mu,
\end{equation}
where $\hat \s(E,Z)=\s(E,Z)-I$.

\begin{prop}\label{main22}
Let $E\to X$ be an \qas\ bundle and $\mu\in \QZ(X)$.

 \smallskip
\noindent
(i)
 \begin{equation}\label{baka}
\c(E,Z)\w\s(E,Z)=\s(E,Z)\w\c(E,Z)=\I.
\end{equation}

\smallskip
\noindent
(ii) For each $\mu\in\QZ(X)$ there is a current $w$, which is the pushforward of a current
on $Z$, such that
$$
dd^c w=\big(\c(E,Z)-\c(E)\big)\w\mu.
$$

\smallskip
\noindent
(iii) If $Z$ has positive codimension in $X$, then, if $\c'(E):= \1_{X\setminus Z}\c(E)$,
\begin{equation}\label{sommarform}
\c(E,Z)\w\mu=\big(\c'(E) - \c(E_0) \w M^E\w \c'(E)\big)\w \1_{X\setminus Z}\mu+\c(E_0)\w\1_Z\mu.
\end{equation}
\end{prop}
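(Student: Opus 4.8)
The plan is to take the three items in turn: (i) is purely formal; (ii) follows from the corresponding statement \eqref{eq:sEZ3} for the Segre endomorphism together with the inverse relations; and (iii) is proved by checking that the displayed endomorphism is a left inverse of $\s(E,Z)$ and then invoking (i). For (i): since $\s_0(E,Z)=I$, the endomorphism $\hat\s(E,Z)=\s(E,Z)-I$ has positive bidegree, so the series \eqref{hatdef} is locally finite and defines an endomorphism of $\QZ(X)$; since $\s(E,Z)$ commutes with $\hat\s(E,Z)$, the telescoping identity $\s(E,Z)\w\c(E,Z)=(I+\hat\s(E,Z))\sum_{k\ge0}(-1)^k\hat\s(E,Z)^k=I$ and its mirror give \eqref{baka}, exactly as in Proposition~\ref{main2prop}~(i). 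It is convenient to record once the following \emph{support observation}: if $\rho\in\QZ(X)$ has Zariski support on $Z$, then by Proposition~\ref{tax} $\1_{X\setminus Z}\rho=0$ and $\1_Z\rho=\rho$, so $\s(E,Z)\w\rho=\s(E_0)\w\rho$ by \eqref{szdef}; as $\s(E_0)$ is multiplication by a smooth closed form, $\s(E_0)\w\rho$ again has Zariski support on $Z$, hence inductively $\hat\s(E,Z)^k\w\rho=\hat\s(E_0)^k\w\rho$ and therefore $\c(E,Z)\w\rho=\c(E_0)\w\rho$. Note also that $\s(E)$, and hence $\c(E)$, does not enlarge Zariski supports (by the direct-image formula of Example~\ref{grit}).

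For (ii), using $\s(E)\w\c(E)=I$ and $\c(E,Z)\w\s(E,Z)=I$ (part (i)) one gets, as endomorphisms of $\QZ(X)$,
\[
\c(E,Z)-\c(E)=\c(E,Z)\w\s(E)\w\c(E)-\c(E,Z)\w\s(E,Z)\w\c(E)=-\,\c(E,Z)\w\big(\s(E,Z)-\s(E)\big)\w\c(E).
\]
Applying this to $\mu\in\QZ(X)$ and using \eqref{eq:sEZ3} with $\c(E)\w\mu\in\QZ(X)$ in place of $\mu$, we obtain $\big(\s(E,Z)-\s(E)\big)\w\c(E)\w\mu=dd^c w_1$ with $w_1$ the pushforward of a current on $Z$. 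Now $dd^c w_1$ lies in $\QZ(X)$ (since $\s(E,Z)$ and $\s(E)$ map $\QZ(X)$ into itself) and has Zariski support on $Z$, so the support observation gives $\c(E,Z)\w dd^c w_1=\c(E_0)\w dd^c w_1=c(E_0)\w dd^c w_1=dd^c\big(c(E_0)\w w_1\big)$, where $c(E_0)\w w_1$ is again the pushforward of a current on $Z$. Hence $\big(\c(E,Z)-\c(E)\big)\w\mu=dd^c w$ with $w=-\,c(E_0)\w w_1$.

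For (iii), let $A$ be the endomorphism of $\QZ(X)$ with $A\w\mu=\big(\c'(E)-\c(E_0)\w M^E\w\c'(E)\big)\w\1_{X\setminus Z}\mu+\c(E_0)\w\1_Z\mu$, where $\c'(E)\w\nu=\1_{X\setminus Z}(\c(E)\w\nu)$ and $M^E\w\nu=\1_Z(\s(E)\w\nu)$; it suffices to show $A=\c(E,Z)$. Put $\mu'=\1_{X\setminus Z}\mu$ and $\mu''=\1_Z\mu$. Using idempotency of $\1_{X\setminus Z},\1_Z$, that $\c(E_0)$ is multiplication by the smooth form $c(E_0)$, and that $M^E\w\nu$ and $\mu''$ have Zariski support on $Z$, one finds $\1_{X\setminus Z}(A\w\mu)=\c'(E)\w\mu'$ and $\1_Z(A\w\mu)=-\,\c(E_0)\w M^E\w\c'(E)\w\mu'+\c(E_0)\w\mu''$. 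Then by \eqref{szdef}, $\s(E_0)\w\c(E_0)=I$, and the definitions of $\c'(E)$ and $M^E$,
\[
\s(E,Z)\w A\w\mu=\s(E)\w\c'(E)\w\mu'-M^E\w\c'(E)\w\mu'+\mu''=\1_{X\setminus Z}\big(\s(E)\w\1_{X\setminus Z}(\c(E)\w\mu')\big)+\mu''.
\]
Finally, applying $\s(E)\w\c(E)=I$ to $\mu'$ and splitting $\c(E)\w\mu'=\1_{X\setminus Z}(\c(E)\w\mu')+\1_Z(\c(E)\w\mu')$, the last summand has Zariski support on $Z$, so $\s(E)$ sends it to a current with Zariski support on $Z$ that is killed by $\1_{X\setminus Z}$; hence $\1_{X\setminus Z}\big(\s(E)\w\1_{X\setminus Z}(\c(E)\w\mu')\big)=\1_{X\setminus Z}\mu'=\mu'$, and $\s(E,Z)\w A\w\mu=\mu'+\mu''=\mu$. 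Therefore $\s(E,Z)\w A=I$, and composing with $\c(E,Z)$ on the left and using (i) gives $A=\c(E,Z)$, which is \eqref{sommarform}.

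The main obstacle I anticipate is the support bookkeeping in (iii) — verifying that $\1_{X\setminus Z}(A\w\mu)$ and $\1_Z(A\w\mu)$ take precisely the stated forms — which rests on a careful analysis of how $\s(E)$, $\c(E)$, $\1_Z$, $\1_{X\setminus Z}$ and multiplication by the smooth forms $s(E_0)$ and $c(E_0)$ act on currents with Zariski support on $Z$; once the support observation and the non-enlargement of supports by $\s(E)$ are in place, the remaining steps are routine. (If $Z$ is singular, the only appeal to Theorem~\ref{main11} on $Z$ is through the already proved \eqref{eq:sEZ3}, so the resolution-of-singularities reduction used there suffices.)
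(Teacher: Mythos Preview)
Your proof is correct, and parts (ii) and (iii) take genuinely different routes from the paper's argument.

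For (ii), the paper expands $\hat\s(E)^k-\hat\s(E,Z)^k$ as the telescoping sum $\sum_{\ell=0}^{k-1}\s(E,Z)^{k-\ell-1}\w(\s(E)-\s(E,Z))\w\s(E)^\ell$, applies \eqref{eq:sEZ3} to each term, and then uses that $\s(E,Z)$ acts as multiplication by $s(E_0)$ on currents supported on $Z$. Your approach is more conceptual: the single identity $\c(E,Z)-\c(E)=-\c(E,Z)\w(\s(E,Z)-\s(E))\w\c(E)$, coming from the two inverse relations, lets you apply \eqref{eq:sEZ3} once and then invoke your support observation. This avoids the summation bookkeeping entirely, at the cost of needing both inverse relations $\s(E)\w\c(E)=I$ and $\c(E,Z)\w\s(E,Z)=I$ up front.

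For (iii), the paper proves by induction the explicit formula $\hat\s(E,Z)^k=\hat\s'(E)^k+\sum_{\ell=0}^{k-1}\hat\s(E_0)^\ell\w M^E\w\hat\s'(E)^{k-1-\ell}$ on $\1_{X\setminus Z}\mu$ and then sums the alternating series. You instead check that the proposed right-hand side $A$ satisfies $\s(E,Z)\w A=I$ and conclude $A=\c(E,Z)$ by uniqueness of the inverse. This is slicker and makes the underlying reason for \eqref{sommarform} transparent, though it relies on the auxiliary fact that $\s(E)$ (hence $\c(E)$) preserves Zariski support, which you correctly extract from Theorem~\ref{main11}(iii) and Example~\ref{grit}. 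The paper's inductive computation, by contrast, yields the intermediate identity \eqref{snuva} as a byproduct, which may be of independent use.
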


By (ii),  $c(E,Z):=\c(E,Z)\w 1$ is cohomologous with $c(E)$ and hence it defines the expected
Bott-Chern class.

By definition both $c(E)$ and $c(E,Z)$ are equal to the usual Chern form in $X\setminus Z$
if the metric on $E$ is smooth there.
However, in general $c(E,Z)\neq c(E)$ across $Z$, see Example~\ref{chernex}.

%

\begin{remark}
In view of (iii),  $c(E,Z)$ is precisely the Chern form that appeared in \cite{Asommar} in case
the metric on $E$ is obtained  from $F$ via a generically injective $g\colon E\to F$,
  cf.~Example~\ref{favvis0}
\end{remark}

\begin{proof}[Proof of Proposition~\ref{main22}]
Part (i) follows in the same way as (i) in Proposition~\ref{main2prop}.
To see (ii) we first write
$$
    \hat\s(E)^k-\hat\s(E,Z)^k = \sum_{\ell=0}^{k-1} \s(E,Z)^{k-\ell-1} \w (\s(E)-\s(E,Z)) \w \s(E)^\ell.
$$
By \eqref{eq:sEZ3}, for each $\ell$ there exists $w_\ell$, which is the push-forward of
a current on $Z$, such that
$$
    (\s(E)-\s(E,Z)) \w \s(E)^\ell \w \mu = dd^c w_\ell.
$$
Since $w_\ell$ has support on $Z$,
$$
    \s(E,Z)^{k-\ell-1} \w (\s(E)-\s(E,Z)) \w \s(E)^\ell \w \mu = s(E_0)^{k-\ell-1} \w dd^c w_\ell
    = dd^c ( s(E_0)^{k-\ell-1} w_\ell),
$$
so we get (ii) with
$$
    w = \sum s(E_0)^{k-\ell-1} \w w_\ell.
$$

\smallskip
For (iii), recall that
$M^E \w \mu:=\1_Z \s(E)\w\mu$ and $\s'(E)\w\mu=\1_{X\setminus Z}\s(E)\w\mu.$
One checks that 
\begin{equation}\label{tokfan}
\hat \s(E,Z)= \hat \s(E)\w \1_{X\setminus Z}+\hat s(E_0)\w \1_Z
\end{equation}
We will prove by induction that
\begin{equation}\label{snuva}
\hat \s(E,Z)^k=\hat \s'(E)^k+\sum_{\ell=0}^{k-1}\hat \s(E_0)^\ell \w M^E \w \hat \s'(E)^{k-1-\ell}, \quad k=1,2, \ldots,
\end{equation}
when acting on $\1_{X\setminus Z} \mu$.
By definition it holds for $k=1$. Assume that it holds for $k$. When applying \eqref{tokfan}
to the right hand side of \eqref{snuva} the first mapping only acts on the first term and gives
$\hat \s(E)\w \hat \s'(E)^k= \hat \s'(E)^{k+1}+ M^E \w \hat \s'(E)^k$. The second mapping neglects the first term
and just gives another factor $\hat \s(E_0)$ on the terms in the sum. Thus \eqref{snuva} holds.
Now
\begin{multline*}
\c(E,Z)=I+\sum_{k=1}^\infty (-1)^k\Big(\hat \s'(E)^k+\sum_{\ell=0}^{k-1}\hat \s(E_0)^\ell \w M^E \w\hat \s'(E)^{k-1-\ell}\Big)=\\
\c'(E)+\sum_{\ell=0}^\infty \sum_{k=\ell+1}^\infty (-1)^k\hat \s(E_0)^\ell \w M^E \w \hat \s'(E)^{k-1-\ell}=\\
\c'(E)-\sum_{\ell=0}^\infty (-1)^\ell \s(E_0)^\ell \w M^E \w \hat \c'(E)=\c'(E)-\c(E_0) \w M^E \w \c'(E)
\end{multline*}
when acting on $\1_{X\setminus Z} \mu$.
The action on $\1_Z\mu$ is just $\hat \s(E_0)^k \w\1_Z\mu$. Thus (iii) follows.
\end{proof}


\section{Remarks and examples}\label{remex}
If $(E,h)$ is qas and its degeneracy locus $Z$ is empty, then, in view of \eqref{spagat} and \eqref{slagga}, the Segre form $s(E,h)$ defined here seems to be the only reasonable one and thus canonical.

If $Z\neq\emptyset$, then any closed current of order $0$ that coincides with $s_k(E,h)$ on $X\setminus Z$ is unique
if $k<\k:=\codim Z$.
This follows from the dimension principle for normal currents, see, e.g., \cite[Corollary III.2.11]{Dem}.
In particular, $s_k(E,h) = s_k'(E,h)$, which coincides with the currents considered by Xia
(for more general classes of metrics), \cite{Xia}, based on the non-pluripolar Monge-Amp\`ere product.
Since the  class of $s_k(E,h,h_0)$ in $\QB(X)$ is independent of $h_0$ it follows from Proposition~\ref{elefant}
that $s_\k(E,h,h_0)$ is independent of $h_0$. 
If the metric $h$ in addition is Griffiths
negative, then by \cite{LRRS}, $s_k(E,h)$ may be obtained as $\lim_{\e \to 0} s_k(E,h_\e)$
for any sequence of smooth Griffiths negative metrics $h_\e$ decreasing to $h$ as $\e \to 0$
for any $k \leq \k$.
On the other hand, \cite[Example~8.2]{LRRS} shows that $s_k(E,h,h_0)$ might depend
on $h_0$ for any $k > \k$.  It follows from Theorem~\ref{main1} (i)
that the limits $s(E,h_\e)$ as $\e \to 0$ might depend on the sequence $h_\e$, even
when $h_\e$ is a sequence of Griffiths negative smooth metrics decreasing to $h$.
Therefore it does not seem to be any  'canonical' choice of  Segre form $s_k(E,h)$ for $k>\k$.

Similar statements as above  hold also for products
$$
s_{k_t}(E,h) \wedge \dots \wedge s_{k_1}(E,h) :=
\s_{k_t}(E,h) \wedge \dots \wedge \s_{k_1}(E,h) 1,
$$
depending on how $k=k_1+\dots+k_t$ compares to $\k$. Such a product is furthermore robust
in the sense that it is commutative in the factors provided $k \leq \k$, which follows
from Theorem~\ref{main11} (vii), cf.~Remark~\ref{remMain11} (ii).

By Theorem~\ref{mainLocBdd}~(iii), $\s(E,h+\e h_0)^k\mu=
s(E\oplus\cdots \oplus E, h\oplus\cdots \oplus h +\e (h_0\oplus \cdots \oplus h_0)\w\mu$
for each $k$. Therefore $\lim_{\e\to 0} c(E, h+\e h_0)\w\mu$ exists and is in $\QZ(X)$. Moreover
it coincides with $c(E,h)\w\mu$ outside the degeneracy locus.  However, in general it is not equal to
$\c(E,h,h_0)\w\mu$. This will be discussed in a forthcoming paper.

\subsection{Difference between $c_k(E)$ and $c_k(E,Z)$}

In general $c(E,Z)$ and $c(E)$ are different. The simplest example is when $Z=X$ and $Z'\neq Z$. Then in general
$s(E)$ will have a residue term, whereas $s(E,Z)=\omega$ and hence $c(E,Z)=c(E_0)$.

\begin{ex}\label{chernex}
Let $g$ be the $2\times 2$-diagonal matrix, with entries $x_1$ and $x_2$,
considered as a morphism between the trivial bundles $E=X\times \C^2_\alpha=F$,
and $F$ is equipped with the trivial metric.
With the trivial metric on $E$ as reference metric $h_0$ and the singular metric $h$ induced by $g$,
we have
\begin{equation}
   s_1(E) = [x_1 = 0] + [x_2 = 0].
\end{equation}
This follows by direct computation, or, e.g., from \cite[Proposition~11.1]{Asommar}.
From \cite[Example~11.2]{Asommar} we have that $s_2(E)=[x_1=x_2=0]$.

Let us now compute $c(E)$ and $c(E,Z)$ for the same \qas\ bundle $E$.
To begin with $c_1(E)=c_1(E,Z)=-s_1(E)=-( [x_1 = 0] + [x_2 = 0])$.
A calculation yields that
\begin{equation}
\s_1(E)^2 = \s_1(E)\w ([x_1 = 0] + [x_2 = 0])= 2[x_1=x_2=0].
\end{equation}
For instance, if $\tau \colon \C\to \C^2$, $t\mapsto(t, 0)$, then $\s_1(\tau^*E)=[t=0]$.
Hence, by Theorem~\ref{main11}~(iii),
$\s_1(E)\w[x_2=0]= \tau_*(s_1(\tau^*E))=\tau_*[t=0]=[x_1=x_2=0]$.
From \cite[Example~11.2]{Asommar} we have that $s_2(E)=s_2(E,Z)=[x_1=x_2=0]$. Thus
$$
c_2(E)=s_1(E)^2-s_2(E)=[x_1=x_2=0].
$$
Since $Z=\{ x_1x_2 = 0 \}$, and $s_1(E) = s_1(E,Z)$
has support on $Z$, cf.~above and \eqref{eq:sEZ},
$$
s_1(E,Z)^2=\s_1(E,Z)\w  ([x_1 = 0] + [x_2 = 0])= 0,
$$
since $\s_1(E_0) = 0$.
We conclude that
$$
c_2(E,Z)=-[x_1=x_2=0],
$$
which is different from $c_2(E)$.
\end{ex}

\def\listing#1#2#3{{\sc #1}:\ {\it #2},\ #3.}

\end{document}